\renewcommand{\@seccntformat}[1]
{{\csname the#1\endcsname}.\hspace{0.3em}}
\renewcommand{\section}{\@startsection
{section}
{1}
{0mm}
{-1.5\baselineskip}
{\baselineskip}
{\bfseries\normalsize}}
\renewcommand{\subsection}{\@startsection
{subsection}
{2}
{0mm}
{-\baselineskip}
{0.5\baselineskip}
{\normalsize\itshape}}
\renewcommand{\subsubsection}{\@startsection
{subsubsection}
{3}
{0mm}
{-.5\baselineskip}
{-2mm}
{\normalsize\itshape}}
\theoremstyle{plain}
\newtheorem*{theorem*}{Theorem}
\newtheorem{theorem}{Theorem}[section]
\newtheorem*{TAk}{Theorem~$\mathbf{A_k}$}
\newtheorem*{TA1}{Theorem~$\mathbf{A_1}$}
\newtheorem*{TB1}{Theorem~$\mathbf{B_1}$}
\newtheorem*{TCk}{Theorem~$\mathbf{C_k}$}
\newtheorem*{TD1}{Theorem~$\mathbf{D_1}$}
\newtheorem*{TEk}{Theorem~$\mathbf{E_k}$}
\newtheorem*{TE1}{Theorem~$\mathbf{E_1}$}
\newtheorem*{RT}{Regularity Theorem}
\newtheorem*{MT1}{First Maz'ja theorem}
\newtheorem*{MT2}{Second Maz'ja theorem}
\newtheorem*{HL}{Hersch Lemma}
\newtheorem*{GY}{Grigor'yan-Yau theorem}
\newtheorem{lemma}[theorem]{Lemma}
\newtheorem{corollary}[theorem]{Corollary}
\newtheorem{prop}[theorem]{Proposition}
\newtheorem{claim}[theorem]{Claim}
\newtheorem*{corollary*}{Corollary}
\theoremstyle{definition}
\newtheorem*{defin*}{Definition}
\newtheorem{defin}{Definition}[section]
\theoremstyle{remark}
\newtheorem*{remark*}{Remark}
\newtheorem{example}{Example}[section]
\newtheorem*{quest*}{Question}
\DeclareMathAlphabet{\matheur}{U}{eur}{m}{n}
\DeclareMathAlphabet{\matheus}{U}{eus}{m}{n}
\DeclareMathAlphabet{\matheuf}{U}{euf}{m}{n}
\numberwithin{equation}{section}
\newcommand{\abs}[1]{\left\lvert#1\right\rvert}
\DeclareMathOperator{\PSL}{PSL}
\DeclareMathOperator{\CAP}{Cap}
\DeclareMathOperator{\capty}{cap}
\DeclareMathOperator{\INT}{Int}
\begin{document}

\author{Gerasim  Kokarev
\\ {\small\it Mathematisches Institut der Universit\"at M\"unchen }
\\ {\small\it Theresienstr. 39, D-80333 M\"unchen, Germany}
\\ {\small\it Email: {\tt Gerasim.Kokarev@mathematik.uni-muenchen.de}}
}

\title{Variational aspects of Laplace eigenvalues on Riemannian
  surfaces}
\date{}
\maketitle

\begin{abstract}

\noindent
We study the existence and properties of metrics maximising the first Laplace eigenvalue among conformal metrics of unit volume on Riemannian surfaces. We describe a general approach to this problem and its higher eigenvalue versions via the direct method of calculus of variations. The principal results include the general regularity properties of $\lambda_k$-extremal metrics and the existence of a partially regular $\lambda_1$-maximiser.
\end{abstract}

\medskip
\noindent
{\small
{\bf Mathematics Subject Classification (2000):} 58J50, 58E11, 49R50.

\noindent
{\bf Keywords}: Laplace eigenvalues, conformal spectrum, extremal metrics, partial regularity, isocapacitory inequalities. }

\tableofcontents

\section{Introduction}
%\label{intro}
\subsection{Preliminaries}
Let $M$ be a compact surface, possibly with boundary. For a Riemannian metric $g$ on $M$ we denote by
$$
0=\lambda_0(g)<\lambda_1(g)\leqslant\lambda_2(g)\leqslant\ldots
\leqslant\lambda_k(g)\leqslant\ldots
$$
the eigenvalues of the Laplace operator $-\Delta_g$. When $M$ has a non-empty boundary we assume that the Neumann boundary conditions are imposed. By the result of Korevaar~\cite{Korv}, each eigenvalue $\lambda_k(g)$ is bounded as the metric $g$ ranges in a fixed conformal class on $M$. More precisely, if $M$ is an orientable surface of genus $\gamma$, then there exists an absolute constant $C_*>0$ such that for any Riemannian metric $g$ the following estimate holds
$$
\lambda_k(g)\mathit{Vol}_g(M)\leqslant C_*\cdot k(\gamma+1)
$$
for each $k\geqslant 0$. This is a generalisation of an earlier result by Yang-Yau~\cite{YY80} for the first eigenvalue: for any Riemannian metric $g$
\begin{equation}
\label{YangYau}
\lambda_1(g)\mathit{Vol}_g(M)\leqslant 8\pi\cdot (\gamma+1).
\end{equation}
For genus zero surfaces the result of Hersch~\cite{H70} states that the equality in the inequality above is achieved on the standard round sphere. In~\cite{Be73} Berger asked whether the flat equilateral
torus maximises the quantity  $\lambda_1(g)\mathit{Vol}_g(M)$ among all metrics on the torus. Later Nadirashvili~\cite{Na96} developed an approach to the Berger problem by maximising the first eigenvalues in conformal classes. Since his paper there has been a growing interest in the extremal problems for eigenvalues on surfaces, and in particular, extremal problems in conformal classes. For the progress on the subject we refer to the papers~\cite{CEl,El00,El03} as well as~\cite{JLNNP,JNP06,El06} and references there. 

The previous work~\cite{JLNNP,Na10a} together with numerical evidence indicate  that metrics maximising Laplace eigenvalues are expected to be singular. This poses the following natural questions.

\medskip
\noindent
{\em What singularities of maximal metrics can occur, in principle? Is it possible to describe them?} 

\medskip
\noindent
From the perspective of calculus of variations, the occurrence of singularities means that the class of smooth Riemannian metrics is {\em not natural} for such extremal problems. In other words, there should be developed a new formalism allowing to deal with singular objects. This point of view leads to the questions of the following kind.

\medskip
\noindent
{\em What is an appropriate variational setting for the eigenvalue extremal problems on singular metrics? In particular, what is the right notion of extremality for singular metrics?}

\medskip
\noindent
One of the purposes of this paper is to develop a general setting to address a circle of similar problems. Below we describe its content  in more detail.

\subsection{Outline of the results}
We study the existence and properties of metrics maximising the first eigenvalue $\lambda_1(g)\mathit{Vol}_g(M)$, and more generally, the $k$th eigenvalue $\lambda_k(g)\mathit{Vol}_g(M)$, among conformal metrics on Riemannian surfaces. More precisely, the purpose of this paper is to develop an approach to this problem via  the {\it direct method of calculus of variations}. First, we show that the Laplace eigenvalues $\lambda_k(g)$ naturally extend to  `weak conformal metrics', understood as Radon measures and prove bounds for them (Theorems~${A_k}$ and~${A_1}$). This setting of {\em eigenvalue problems on surfaces with measures} gives a uniform formalism of treating eigenvalue problems on singular surfaces as well as eigenvalue problems with Steklov boundary conditions. We also prove a general existence theorem (Theorem $B_1$) of a measure maximising the first non-trivial eigenvalue $\lambda_1$ under the hypothesis
\begin{equation}
\label{intro:ru}
\sup\left\{\lambda_1(g)\mathit{Vol}_g(M):g\in c\right\}>8\pi.
\end{equation}
on a given conformal class $c$. The hypothesis~\eqref{intro:ru} guarantees that the maximiser is not pathologically singular. It satisfies a {\em linear isocapacitory inequality}, see Sect.~\ref{nonvan}; in particular, it vanishes on sets of zero capacity and the mass of balls $\mu(B(x,r))$ decays at least as $\ln^{-1}(1/r)$ as $r\to 0+$. 

Second, we define a notion of $\lambda_k$-extremality of general measures under "conformal variations" and derive first variation formulas. The main result of the paper is concerned with the study of regularity properties of {\em general $\lambda_k$-extremal measures}. More precisely, in Sect.~\ref{em1} we prove the following statement (Theorem~$C_k$).
\begin{RT}
Let $M$ be a compact surface, possibly with boundary, endowed with a conformal class $c$ of Riemannian metrics. Let $\mu$ be a $\lambda_k$-extremal measure  which is not completely singular and such that the embedding
\begin{equation}
\label{intro:em}
L_2(M,\mu)\cap L^1_2(M,\mathit{Vol})\subset L_2(M,\mu)
\end{equation}
is compact. 
\begin{itemize}
\item [(i)] Then the measure $\mu$ is absolutely continuous (with respect to $\mathit{Vol}_g$, $g\in c$) in the interior of its support $S\subset M$, its density function is $C^\infty$-smooth in $S$ and vanishes at isolated points only. In other words, the measure $\mu$ defines a $C^\infty$-smooth metric on $S$, conformal to $g\in c$ away from isolated degeneracies which are conical singularities.
\item [(ii)] If the support of the measure $\mu$ does not coincide with $M$, then the measure has a non-trivial singular set $\Sigma\subset M\backslash\INT S$.
\end{itemize}
\end{RT}

It is important to mention that there are singular $\lambda_1$-extremal measures, see Sect.~\ref{em1}, and thus, the regularity theory is non-trivial. The compactness of embedding~\eqref{intro:em} in the theorem is a delicate hypothesis. It is closely related to the behaviour of sharp constants in the so-called isocapacitory inequalities. Studying this relationship, we obtain asymptotics for the values $\mu(B(x,r))$ as $r\to 0$, which describe the margin between the validity and the failure of the compactness of embedding~\eqref{intro:em}. These asymptotics show that there are capacitory measures for which embedding~\eqref{intro:em} is not compact. 

As an elementary application of the developed analysis, we obtain the notion of $\lambda_k$-extremality for metrics with conical singularities under conformal deformations, and are able to characterise such metrics via harmonic maps into unit spheres in the Euclidean space, see Corollary~\ref{hm}. The latter statement generalises earlier results in~\cite{El03}, see also~\cite{El00,Na96}, known for Riemannian metrics. 

In the final part of the paper, we prove the existence of a partially regular $\lambda_1$-maximiser (Theorem~$D_1$) and study concentration-compactness properties of $\lambda_k$-extremal metrics. The version of the latter result for the first eigenvalue (Theorem~$E_1$) says that any sequence $g_n$ of $\lambda_1$-extremal conformal metrics contains a subsequence that either converges smoothly to a $\lambda_1$-extremal metric or concentrates to a pure Dirac measure and $\lambda_1(g_n)\to 8\pi$ as $n\to +\infty$. 

After the first preprint of the paper has appeared, there has been a number of developments on the subject. First, the results in our Example~\ref{stek} (the Steklov eigenvalue problem) have been independently obtained in~\cite{CEG}. Extremal problems for Steklov eigenvalues have been also studied by Fraser and Schoen in~\cite{FS2} where the authors prove the existence of a $\lambda_1$-maximiser for zero genus surfaces. The state of the subject concerning extremal problems for Laplace eigenvalues is also described in~\cite{FS3}. In the recent preprint~\cite{Pet} Petrides claims a general existence theorem of a $\lambda_1$-maximiser in every conformal class on a closed Riemannian surface, the statement also announced by Nadirashili and Sire~\cite{Na10a}. The argument by Petrides uses the non-concentration analysis from the present paper as well as the heat kernel regularization introduced by Fraser and Schoen~\cite{FS2}. Petrides also shows that hypothesis~\eqref{intro:ru} in our Theorem~$B_1$ always holds on closed Riemannian surfaces different from a sphere. On the other hand, by~\cite{Jam,KoNa} on surfaces with boundary there are conformal classes for which this hypothesis fails. 

In spite of all this progress made within the last 2-3 years, we have kept the main text of the paper essentially unchanged making only the corrections requested by the referee.

\subsection{Paper organisation}
The paper is organised in the following way. In Sect.~\ref{prems} we describe a general setup for the variational problem. First, we show that Laplace eigenvalues naturally extend to the set of Radon measures (which play the role of "weakly conformal metrics") where they are upper semi-continuous in the weak topology. We also discuss the boundedness of eigenvalues among non-atomic probability measures, based on earlier results by Korevaar and their improvements by Grigor'yan, Netrusov, and Yau.

In Sect.~\ref{nonvan} we study properties of the measures whose first eigenvalues do not vanish. We show that this hypothesis is equivalent to the validity of a linear isocapacitory inequality (Corollary~\ref{lisocap}). We proceed with comparing it with the compactness hypothesis for embedding~\eqref{intro:em}; our methods here are based on the isocapacitory inequalities and the results by Maz'ja. In Sect.~\ref{weakmax:ex} we give a general statement on the existence of a $\lambda_1$-maximal Radon measure. Sect.~\ref{em1} is devoted to the actual calculus of variations -- we define a notion of extremality and derive the first variation formulas (Lemma~\ref{d:eiv}) for an arbitrary eigenvalue $\lambda_k$. These are then used to prove the regularity of any $\lambda_k$-extremal metric under the hypothesis that the embedding~\eqref{intro:em} is compact. In Sect.~\ref{prm} we give an elementary argument which yields the existence of partially regular maximisers in a conformal class.

The principal part of the paper ends with a collection of other related results and remarks in Sect.~\ref{other}. These include the concentration-compactness properties of extremal metrics, geometric hypotheses allowing to obtain better regularity, and a number of open questions. The paper contains two appendices where we collect details of technical or complementary nature for reader's convenience.

\medskip
\noindent
{\it Acknowledgements.}  During the course of the work I have benefited from the comments and advice of Vladimir Eiderman, Alexander Grigor'yan, Emmanuel Hebey, Nikolai Nadirashvili,  and Iosif Polterovich. The work has been accomplished during author's stay at the University of Cergy-Pontoise (France) during 2010/11 supported by the EU Commission via the Marie Curie Actions scheme.

\section{Eigenvalues on measure spaces}
\label{prems}
\subsection{Classical notation}
Let $M$ be a compact smooth surface with or without boundary. Recall that for a Riemannian metric $g$ on $M$ the Laplace operator $-\Delta_g$ in local coordinates $(x^i)$, $1\leqslant i\leqslant 2$, has the form
$$
-\Delta_g=-\frac{1}{\sqrt{\abs{g}}}\frac{\partial~}{\partial x^i}\left(\sqrt{\abs{g}}g^{ij}\frac{\partial~}{\partial x^j}\right),
$$
where $(g_{ij})$ are components of the metric $g$, $(g^{ij})$ is the inverse tensor, and $\abs{g}$ stands for $\det(g_{ij})$. Above we use the summation convention for the repeated indices. The Laplace eigenvalues 
$$
0=\lambda_0(g)<\lambda_1(g)\leqslant\ldots\leqslant\lambda_k(g)\leqslant\ldots
$$
are real numbers for which the equation
\begin{equation}
\label{eifun}
(\Delta_g+\lambda_k(g))u=0
\end{equation}
has a non-trivial solution. In the case when $M$ has a non-empty boundary, we suppose that the solutions $u$ above satisfy Neumann boundary conditions. The solutions of equation~\eqref{eifun} are called eigenfunctions, and their collection over all eigenvalues forms a complete orthogonal basis in $L^2(M)$. Recall that by variational characterisation 
\begin{equation}
\label{minmax}
\lambda_k(g)=\inf_{\Lambda^{k+1}}\sup_{u\in\Lambda^{k+1}}\matheur R_g(u),
\end{equation}
where the infimum is taken over all $(k+1)$-dimensional subspaces in $C^{\infty}(M)$, the supremum is over non-trivial $u\in\Lambda^{k+1}$, and $\matheur R_g(u)$ stands for the Rayleigh quotient,
$$
\matheur R_g(u)=\left(\int_M\abs{\nabla u}^2\mathit{dVol}_g\right)/\left(\int_M u^2\mathit{dVol}_g\right).
$$
The infimum in relation~\eqref{minmax} is achieved on the space spanned by the first $(k+1)$ eigenfunctions.

\subsection{The setup for measure spaces. Korevaar eigenvalue bounds.}
Let $M$ be a compact surface and $c$ be a conformal class of $C^\infty$-smooth metrics on $M$. The conformal metrics from $c$ can be identified with their volume measures, and to apply variational methods, we
consider eigenvalues as functionals of more general measures on $M$. The reasoning is that the space of conformal Riemannian metrics does not possess any compactness properties and, in fact,  is not even closed in any natural topology. Besides, we expect that maximal metrics (that is eigenvalues maximisers) may be degenerate, see~\cite{JLNNP,Na10a},  and we should be able to assign the values $\lambda_k$ to such metrics.

For a Radon measure $\mu$ on $M$ the $k$th eigenvalue $\lambda_k(\mu,c)$ is defined by the min-max principle 
$$
\lambda_k(\mu,c)=\inf_{\Lambda^{k+1}}\sup_{u\in\Lambda^{k+1}}\matheur R_c(u,\mu),
$$
where the infimum is taken over all $(k+1)$-dimensional subspaces $\Lambda^{k+1}\subset L_2(M,\mu)$ formed by $C^{\infty}$-smooth functions, the supremum is over non-trivial $u\in\Lambda^{k+1}$, and $\matheur R_c(u,\mu)$ stands for the Rayleigh quotient
\begin{equation}
\label{RQ}
\matheur R_c(u,\mu)=\left(\int_M\abs{\nabla
  u}^2\mathit{dVol}_g\right)/\left(\int_Mu^2d\mu\right),
\end{equation}
where $g\in c$ is a reference metric. If $M$ has a non-empty boundary, we assume that the test functions are continuous up to the boundary. By conformal invariance of the Dirichlet energy, the Rayleigh quotient does not depend on a choice of such a metric $g\in c$.

The following example shows that so defined eigenvalues are natural generalisations of Laplace eigenvalues to certain degenerate metrics.
\begin{example}[Metrics with conical singularities]
Let $M$ be a compact surface, possibly with boundary, and $h$ be a metric on $M$ with conical singularities. Then, as is known, such a metric $h$ is conformal to a genuine Riemannian metric $g$ on $M$ away from the singularities. The Dirichlet integral with respect to the metric $h$ is defined as an improper integral; by the conformal invariance, it satisfies the relation
$$
\int_M\abs{\nabla u}^2\mathit{dVol}_h=\int_M\abs{\nabla u}^2\mathit{dVol}_g
$$
for any smooth function $u$. Thus, we conclude that the Laplace eigenvalues of a metric $h$ coincide with the eigenvalues of the pair $(\mathit{Vol}_h,[g])$ in the sense introduced above. Mention also that the $\lambda_k(\mathit{Vol}_h,[g])$'s coincide with other definitions of Laplace eigenvalues for metrics with conical singularities used in the literature, see e.g.~\cite{JLNNP, Ko3}.  
\end{example}

Clearly, the zero eigenvalue $\lambda_0(\mu,c)$ vanishes for any measure $\mu$ and any conformal class $c$. The corresponding eigenfunctions coincide with constant functions. The following example shows that for higher eigenvalues the eigenfunctions (orthogonal to constants) do not always exist.
\begin{example}[Possible pathologies]
Let $\mu$ be a discrete measure supported at $\ell$ distinct points. Since the capacity of each point is equal to zero, it is straightforward to show that
$$
\lambda_k(\mu,c)=\left\{
\begin{array}{cc}
0, & \text{if }\ell>k,\\
+\infty, & \text{if }\ell\leqslant k,
\end{array}
\right.
$$
for an arbitrary conformal class $c$ on $M$.
\end{example}
Despite this example, it is straightforward to see that the $k$th eigenvalue $\lambda_k(\mu,c)$ is finite for any measure whose support contains more than $k$ distinct points. Further, the following result shows that the quantity $\lambda_k(\mu,c)\mu(M)$ is actually uniformly bounded for all continuous (that is with trivial discrete part) Radon measures $\mu$.
\begin{TAk}
Let $M$ be a compact surface, possibly with boundary, endowed with a conformal class $c$. Then there exists a constant $C>0$ such that for any continuous Radon measure $\mu$ the following inequality holds:
$$
\lambda_k(\mu,c)\mu(M)\leqslant C k.
$$
Moreover, if $M$ is orientable, then the constant $C$ can be chosen independently on the conformal class $c$ in the form $C_*(\gamma+1)$, where $C_*>0$ is a universal constant, and $\gamma$ is the genus of
$M$. 
\end{TAk}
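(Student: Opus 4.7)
The plan is to adapt the classical capacitor argument of Korevaar, in the refined form due to Grigor'yan and Yau, to the setting of arbitrary continuous Radon measures. The strategy has three natural ingredients: a measure-adapted decomposition of $M$ into disjoint capacitors with controlled conformal capacity, the construction of test functions with disjoint supports, and the min-max principle.

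First, I would invoke the Grigor'yan--Yau theorem (cited in the introduction) to produce $k+1$ pairs of nested sets $(A_i,\hat A_i)$ with $A_i\subset\hat A_i$, the outer sets $\hat A_i$ pairwise disjoint, each $A_i$ of $\mu$-mass at least $c_0\,\mu(M)/k$ for a universal constant $c_0$, and with the conformal capacity of each pair uniformly bounded by a constant $C_0$ depending only on the conformal class $c$. The continuity hypothesis on $\mu$ is critical at this step: the absence of atoms is precisely what allows one to split the mass into $k$ well-separated pieces of comparable size. For each pair I would then choose a Lipschitz cutoff $\phi_i$ equal to $1$ on $A_i$, vanishing outside $\hat A_i$, and satisfying
\[
\int_M\abs{\nabla\phi_i}^2\,\mathit{dVol}_g\leqslant C_0+\varepsilon,\qquad \int_M\phi_i^2\,d\mu\geqslant\mu(A_i)\geqslant c_0\mu(M)/k.
\]
Conformal invariance of the Dirichlet integral makes these estimates independent of the reference metric $g\in c$.

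Since the $\phi_i$ have pairwise disjoint supports, they span a $(k{+}1)$-dimensional subspace $\Lambda\subset L_2(M,\mu)$ of smooth (or smoothly approximated) functions. For $u=\sum a_i\phi_i\in\Lambda$ disjointness yields
\[
\matheur R_c(u,\mu)\;\leqslant\;\max_i\matheur R_c(\phi_i,\mu)\;\leqslant\;\frac{C_0\,k}{c_0\,\mu(M)},
\]
and the variational characterisation of $\lambda_k(\mu,c)$ gives $\lambda_k(\mu,c)\mu(M)\leqslant(C_0/c_0)\,k$, as required. For an orientable $M$ the Grigor'yan--Yau construction delivers constants of the form $C_0=\tilde C(\gamma+1)$ independent of the conformal class, typically through a conformal branched covering onto $S^2$; this upgrades the bound to the universal form claimed in the second half of the statement.

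The main obstacle is the decomposition step. Korevaar's original argument handles genuine smooth conformal metrics by tessellating geodesic balls in a reference Riemannian metric; for a general continuous Radon measure $\mu$, which may concentrate on Lebesgue negligible sets, one needs a decomposition adapted to the geometry of $\mu$ itself while still keeping conformal capacities uniformly bounded. Verifying that the Grigor'yan--Yau covering mechanism applies in the form required here --- namely that any purely continuous Radon measure on a conformal surface admits such a capacitor decomposition --- is the technical heart of the proof; once it is in place, the test-function construction and the min-max step are routine.
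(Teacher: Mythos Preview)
Your proposal is correct and follows essentially the same approach as the paper's proof in Appendix~\ref{ap:gy}: the paper invokes the Grigor'yan--Yau annulus decomposition (applied to an arbitrary continuous Radon measure on the metric space $(M,d_g)$) to obtain $k+1$ disjoint capacitors $(A_i,2A_i)$ with $\mu(A_i)\geqslant c\,\mu(M)/k$ and $\CAP(A_i,2A_i)\leqslant 4Q$, then uses the capacity test-functions in the min-max principle exactly as you describe. For the orientable case the paper also proceeds via a holomorphic branched cover $u:M\to S^2$ of degree $\leqslant\gamma+1$, applying Grigor'yan--Yau to the push-forward measure on $S^2$ and pulling the capacitors back, which is precisely the mechanism you sketch.
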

The theorem above is a basis for our variational approach. Its proof is based on the results by Grigor'yan, Netrusov, and Yau~\cite{GY99,GNY}, built on the original method of Korevaar~\cite{Korv}. It appears in Appendix~\ref{ap:gy}. The estimate~\eqref{YangYau} of Yang and Yau can be also generalised for continuous Radon measures to give a more precise version of Theorem~$A_k$ for the first eigenvalue, see~\cite{KoNa}.
\begin{TA1}
Let $M$ be an orientable compact surface, possibly with boundary, endowed with a conformal class $c$. Then for any continuous Radon measure $\mu$ the first eigenvalue satisfies the inequality
$$
\lambda_1(\mu,c)\mu(M)\leqslant 8\pi(\gamma+1),
$$
where $\gamma$ is the genus of $M$.
\end{TA1} 
\begin{example}[Steklov eigenvalues]
\label{stek}
Let $M$ be a surface with boundary, endowed with a conformal class $c$. For a Riemannian metric $g\in c$ let $\mu_g$ be its boundary volume measure. Then the eigenvalues $\lambda_k(\mu_g,c)$ coincide with the so-called Steklov eigenvalues of a metric $g$, representing the spectrum of the Dirichlet-to-Neumann map. We refer to the recent papers~\cite{GP,FS} for the account and further references on the subject. In particular, Theorems~$A_k$ and~$A_1$ above yield isoperimetric inequalities for the Steklov eigenvalues, complementing earlier results by Weinstock~\cite{Wei} and Fraser and Schoen~\cite{FS}.
\end{example}

Now the existence problem for a maximising $\lambda_k(g)\mathit{Vol}_g(M)$ metric in $c$ splits into the two  separate parts: the existence of a weak maximiser -- that is a continuous Radon measure maximising the quantity $\lambda_k(\mu,c)\mu(M)$ among all continuous Radon measures, and the regularity theory for weak
maximisers. The following {\em upper semi-continuity} property is an important ingredient for the former.
\begin{prop}[Upper semi-continuity]
\label{up_semi}
Let $(M,c)$ be a compact Riemann surface, and $(\mu_n)$, $n=1,2,\ldots$, be a sequence of Radon probability measures on $M$ converging weakly to a Radon probability measure $\mu$. Then for any $k\geqslant 0$ we have
$$
\lim\sup\lambda_k(\mu_n,c)\leqslant\lambda_k(\mu,c).
$$
\end{prop}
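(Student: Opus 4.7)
The plan is to combine the min-max characterisation of $\lambda_k(\mu,c)$ with the fact that, for any fixed finite family of continuous test functions, the Rayleigh quotients depend continuously on the measure in the weak topology. The only subtlety is to verify that a nearly-optimal test subspace for $\mu$ remains an \emph{admissible} $(k+1)$-dimensional subspace for $\mu_n$ once $n$ is large enough, i.e.\ that its elements stay linearly independent in $L_2(M,\mu_n)$.

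If $\lambda_k(\mu,c)=+\infty$ there is nothing to prove, so I assume this eigenvalue is finite and fix $\varepsilon>0$. By the min-max formula I choose a $(k+1)$-dimensional subspace $\Lambda^{k+1}\subset C^\infty(M)$ with
\[
\sup_{u\in\Lambda^{k+1}\setminus\{0\}}\matheur R_c(u,\mu)\leqslant \lambda_k(\mu,c)+\varepsilon/2.
\]
In particular $\int_M u^2\,d\mu>0$ for every non-zero $u\in\Lambda^{k+1}$. Pick any basis $\phi_1,\dots,\phi_{k+1}$ and form the symmetric matrices
\[
D_{ij}=\int_M\nabla\phi_i\cdot\nabla\phi_j\,\mathit{dVol}_g,\qquad
M_{ij}=\int_M\phi_i\phi_j\,d\mu,\qquad
M^{(n)}_{ij}=\int_M\phi_i\phi_j\,d\mu_n.
\]
The matrix $D$ is independent of $n$, and since each $\phi_i\phi_j$ is continuous, the weak convergence $\mu_n\rightharpoonup\mu$ yields $M^{(n)}\to M$ entry-wise.

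By construction $M$ is positive definite, hence $M^{(n)}$ is positive definite for all sufficiently large $n$; equivalently, $\phi_1,\dots,\phi_{k+1}$ remain linearly independent in $L_2(M,\mu_n)$, so $\Lambda^{k+1}$ is an admissible competitor in the min-max for $\lambda_k(\mu_n,c)$. For any $u=\sum c_i\phi_i$ one has
\[
\matheur R_c(u,\mu_n)=\frac{\langle Dc,c\rangle}{\langle M^{(n)}c,c\rangle},
\]
and the supremum of this generalised Rayleigh quotient over $c\neq 0$ equals the largest root of $\det(D-\lambda M^{(n)})=0$, which depends continuously on $M^{(n)}$ in the region where $M^{(n)}$ is positive definite. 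Consequently
\[
\sup_{u\in\Lambda^{k+1}\setminus\{0\}}\matheur R_c(u,\mu_n)\xrightarrow[n\to\infty]{}\sup_{u\in\Lambda^{k+1}\setminus\{0\}}\matheur R_c(u,\mu)\leqslant\lambda_k(\mu,c)+\varepsilon/2.
\]

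Combining the last estimate with the min-max bound $\lambda_k(\mu_n,c)\leqslant\sup_{\Lambda^{k+1}}\matheur R_c(\cdot,\mu_n)$ gives $\lambda_k(\mu_n,c)\leqslant \lambda_k(\mu,c)+\varepsilon$ for all sufficiently large $n$. Taking $\limsup_n$ and then letting $\varepsilon\to 0$ yields the claimed inequality. The only point that could, in principle, cause trouble is the admissibility of the test subspace for $\mu_n$, but the positivity of $M$ on $\Lambda^{k+1}$ is exactly what propagates to $M^{(n)}$ by weak convergence, so this obstacle is resolved automatically.
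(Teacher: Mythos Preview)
Your proof is correct and follows essentially the same approach as the paper: fix an $\varepsilon$-optimal $(k+1)$-dimensional test space $\Lambda^{k+1}\subset C^\infty(M)$, use weak convergence to see that the supremum of the Rayleigh quotient over $\Lambda^{k+1}$ is continuous in the measure, and conclude via min-max. The paper simply asserts the convergence $\sup_{\Lambda^{k+1}}\matheur R_c(\mu_n,u)\to\sup_{\Lambda^{k+1}}\matheur R_c(\mu,u)$ in one line, whereas you spell it out through the Gram matrices $D$, $M$, $M^{(n)}$ and the generalised eigenvalue problem; in particular you make explicit the admissibility check (that $\Lambda^{k+1}$ stays $(k+1)$-dimensional in $L_2(M,\mu_n)$), a point the paper passes over silently.
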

\begin{proof}
For a given $\varepsilon>0$, let $\Lambda^{k+1}$ be a $(k+1)$-dimensional subspace of $C^\infty(M)$ such that
$$
\sup_{u\in\Lambda^{k+1}}\matheur R_c(\mu,u)\leqslant\lambda_k(\mu,c)+\varepsilon.
$$
By weak convergence of measures, we obtain that
$$
\sup_{u\in\Lambda^{k+1}}\matheur R_c(\mu_n,u)\longrightarrow\sup_{u\in\Lambda^{k+1}}\matheur R_c(\mu,u).
$$
In other words, for a sufficiently large $n$ we have
$$
\sup_{u\in\Lambda^{k+1}}\matheur R_c(\mu_n,u)\leqslant\sup_{u\in\Lambda^{k+1}}\matheur R_c(\mu,u)+\varepsilon\leqslant\lambda_k(\mu,c)+2\varepsilon.
$$
The latter implies that
$$
\lambda_k(\mu_n,c)\leqslant\lambda_k(\mu,c)+2\varepsilon
$$
for all sufficiently large $n$, and passing to the limit, we obtain
$$
\lim\sup\lambda_k(\mu_n,c)\leqslant\lambda_k(\mu,c)+2\varepsilon.
$$
Since $\varepsilon>0$ above is arbitrary, we are done.
\end{proof}

\subsection{Preliminaries on eigenfunctions}
Here we  collect a number of elementary statements describing properties of eigenfunctions in the setting of measure spaces. We start with introducing a natural space for the Rayleigh quotient~\eqref{RQ}, that is  the space
$$
\mathcal L=L_2(M,\mu)\cap L_2^1(M,\mathit{Vol}_g);
$$
here the second space in the intersection is formed by distributions whose first derivatives are in $L_2(M,\mathit{Vol}_g)$, see~\cite{Ma}. Following classical terminology, a function $u\in\mathcal L$ is called an {\it eigenfunction} for $\lambda_k(\mu,c)$, if it is contained in a $(k+1)$-dimensional subspace $\Lambda^{k+1}\subset\mathcal L$ such that
\begin{equation}
\label{def:eigen}
\matheur R_c(u,\mu)=\sup_{\varphi\in\Lambda^{k+1}}\matheur R_c(\varphi,\mu)
\end{equation}
and the value $\matheur R_c(u,\mu)$ coincides with $\lambda_k(\mu,c)$. The following characterisation of eigenfunctions is often used in the sequel.
\begin{prop}
\label{prop:eigen}
Let $M$ be a compact surface, possibly with boundary, endowed with a conformal class of Riemannian metrics. Let $\mu$ be a continuous Radon measure on $M$ whose eigenvalue $\lambda_k(\mu,c)$ is positive.
Suppose that there exist eigenfunctions corresponding to the first $k$ eigenvalues $\lambda_\ell(\mu)$, $0<\ell<k$. Then a non-trivial function $u\in\mathcal L$ is an eigenfunction for $\lambda_k(\mu,c)$ if and only if it satisfies the integral identity 
\begin{equation}
\label{eq:eigen}
\int_M\langle\nabla u,\nabla\varphi\rangle\mathit{dVol}_g
=\lambda_k(\mu,c)\int_Mu\cdot\varphi d\mu
\end{equation}
for any test-function $\varphi\in\mathcal L$.
\end{prop}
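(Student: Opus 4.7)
The plan is to prove both implications by induction on $k$. The inductive hypothesis supplies eigenfunctions $u_0,u_1,\ldots,u_{k-1}$ for $\lambda_0,\ldots,\lambda_{k-1}$ that moreover satisfy the integral identity~\eqref{eq:eigen} with their own eigenvalues; after a linear change within each eigenspace, I also assume the $u_j$ are $L_2(M,\mu)$-orthonormal.

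For the necessity direction, suppose $u$ is an eigenfunction for $\lambda_k(\mu,c)$ in the sense of~\eqref{def:eigen}. A Gram--Schmidt argument within the $(k+1)$-dimensional subspace $\Lambda^{k+1}$ containing $u$ lets me assume $u$ is $L_2(\mu)$-orthogonal to each $u_j$, $j<k$, so that $u$ minimises $\matheur R_c(\cdot,\mu)$ on the $L_2(\mu)$-orthogonal complement of $\mathrm{span}(u_0,\ldots,u_{k-1})$ inside $\mathcal L$. Setting the first variation of the Rayleigh quotient to zero and introducing Lagrange multipliers for the orthogonality constraints yields
\begin{equation*}
\int_M\langle\nabla u,\nabla\varphi\rangle\mathit{dVol}_g=\lambda_k(\mu,c)\int_M u\varphi\,d\mu+\sum_{j<k}\beta_j\int_M u_j\varphi\,d\mu
\end{equation*}
for every $\varphi\in\mathcal L$. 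Testing with $\varphi=u_i$ and invoking the inductive identity for $u_i$ — which by symmetry of the Dirichlet form gives $\int_M\langle\nabla u,\nabla u_i\rangle\mathit{dVol}_g=\lambda_i\int_M u\,u_i\,d\mu=0$ — forces $\beta_i=0$ and recovers~\eqref{eq:eigen}.

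For the sufficiency direction, let $u\in\mathcal L$ satisfy~\eqref{eq:eigen}. Testing against $\varphi=u$ immediately gives $\matheur R_c(u,\mu)=\lambda_k(\mu,c)$. Testing against $\varphi=u_j$, $j<k$, and using the inductive identity for $u_j$ produces $(\lambda_k-\lambda_j)\int_M u\,u_j\,d\mu=0$, so $u$ is $L_2(\mu)$-orthogonal to every $u_j$ with $\lambda_j<\lambda_k$. If $u$ already belongs to $\mathrm{span}(u_0,\ldots,u_{k-1})$, then it lies in the $\lambda_k$-eigenspace among those, and is tautologically an eigenfunction. Otherwise $\Lambda^{k+1}:=\mathrm{span}(u_0,\ldots,u_{k-1},u)$ has dimension $k+1$, and after subtracting from $u$ its $L_2(\mu)$-projections onto the $u_j$'s with $\lambda_j=\lambda_k$ — an operation preserving~\eqref{eq:eigen} — bilinearity and the orthogonality relations above diagonalise numerator and denominator of $\matheur R_c$ simultaneously on $\Lambda^{k+1}$. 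The Rayleigh quotient then reduces on $\Lambda^{k+1}$ to a convex combination of $\lambda_0,\ldots,\lambda_k$, hence is bounded by $\lambda_k$ and attains this bound at $u$.

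The main step requiring care is the vanishing of the Lagrange multipliers $\beta_j$ in the necessity direction: this is precisely where the hypothesis on the existence of eigenfunctions for the previous eigenvalues enters, via the inductive integral identity combined with the symmetry of the Dirichlet form. In the sufficiency direction, the projection step is needed only to accommodate possible multiplicity $\lambda_{k-1}=\lambda_k$; apart from that, the argument is linear-algebraic.
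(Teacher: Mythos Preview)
Your sufficiency direction is correct and essentially matches the paper's. The necessity direction, however, has a genuine gap at the step ``a Gram--Schmidt argument within the $(k+1)$-dimensional subspace $\Lambda^{k+1}$ containing $u$ lets me assume $u$ is $L_2(\mu)$-orthogonal to each $u_j$''. Gram--Schmidt performed inside $\Lambda^{k+1}$ only orthonormalises a basis \emph{of} $\Lambda^{k+1}$; it cannot force $u$ to be orthogonal to the $u_j$, which are not assumed to lie in $\Lambda^{k+1}$. If instead you mean to replace $u$ by its projection $\tilde u$ onto the orthogonal complement of $\mathrm{span}(u_0,\dots,u_{k-1})$, you have changed the function, and establishing~\eqref{eq:eigen} for $\tilde u$ does not yield it for the original $u$: the discarded component $u-\tilde u=\sum_j\langle u,u_j\rangle_\mu\,u_j$ satisfies the identity with the \emph{lower} $\lambda_j$ on the right-hand side, not with $\lambda_k$, so the two pieces do not recombine. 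Without the orthogonality, your subsequent claim that $u$ minimises $\matheur R_c(\cdot,\mu)$ on $\mathrm{span}(u_0,\dots,u_{k-1})^\perp$ is unjustified --- $u$ may not even belong to that subspace --- and the Lagrange-multiplier computation never gets started.

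The paper's route is different and sidesteps this difficulty. It takes $\Lambda^{k+1}$ to be the span of eigenfunctions for $\lambda_0,\dots,\lambda_k$ (so that $u\in\Lambda^{k+1}$ tautologically) and argues that $t\mapsto\matheur R_c(u+t\varphi,\mu)$ has a maximum at $t=0$ when $\varphi\in\Lambda^{k+1}$ and a minimum at $t=0$ when $\varphi$ lies in the $L_2(\mu)$-orthogonal complement of $\Lambda^{k+1}$; differentiating at $t=0$ gives~\eqref{eq:eigen} for every $\varphi\in\mathcal L$ directly, with no preliminary reduction to a constrained minimisation problem and no need to first arrange $u\perp u_j$.
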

\begin{proof}
Let $u$ be an eigenfunction for $\lambda_k(\mu,c)$, and denote by $\Lambda^{k+1}$ the span of eigenfunctions corresponding to $\lambda_\ell(\mu,c)$, where $0\leqslant\ell\leqslant k$. For a
test-function $\varphi\in\Lambda^{k+1}$ the function 
\begin{equation}
\label{R}
t\longmapsto\matheur R_c(u+t\varphi,\mu)
\end{equation}
has a maximum at $t=0$, and relation~\eqref{eq:eigen} follows by differentiation of the Rayleigh quotient at $t=0$. Further for a test-function $\varphi$ from the orthogonal complement of $\Lambda^{k+1}$
in $\mathcal L$ the function~\eqref{R} has a minimum at $t=0$, and the conclusion follows in the same fashion.

Conversely, suppose that a function $u$ satisfies identity~\eqref{eq:eigen} for any $\varphi\in\mathcal L$. Then, in particular, the value of the Rayleigh quotient $R_c(u,\mu)$ coincides with $\lambda_k(\mu,c)$. The $(k+1)$-dimensional space containing $u$ and satisfying~\eqref{def:eigen} can be constructed as a span of $u$ with eigenfunctions corresponding to lower eigenvalues as well as eigenvalues that coincide with $\lambda_k(\mu,c)$.
\end{proof}
Note that the hypothesis on the existence of lower eigenfunctions, in Prop.~\ref{prop:eigen}, is vacuous for the first eigenvalue.  In general, the existence of eigenfunctions is related to the compactness of the embedding
\begin{equation}
\label{CE}
\mathcal{L}=L_2(M,\mu)\cap L_2^1(M,\mathit{Vol}_g)\subset L_2(M,\mu).
\end{equation}
The following statement follows by fairly standard arguments; we outline them for the sake of completeness.
\begin{prop}
\label{ex:eigen}
Let $M$ be a compact surface, possibly with boundary, endowed with a conformal class of Riemannian metrics, and $\mu$ be a Radon measure such that the embedding~\eqref{CE} is compact. Then for any $k>0$ the
eigenvalue $\lambda_k(\mu,c)$ is positive and has an eigenfunction.  Moreover, the space formed by eigenfunctions corresponding to equal eigenvalues is finite-dimensional. 
\end{prop}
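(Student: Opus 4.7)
The plan is to follow the classical direct method: treat $\lambda_k(\mu,c)$ as the minimum of a Rayleigh-type quotient on an appropriate subspace, extract a minimising sequence, and use the compactness of \eqref{CE} together with the lower semi-continuity of the Dirichlet energy (as a norm in $L_2^1(M,\mathit{Vol}_g)$) to produce a minimiser. Throughout I would equip $\mathcal{L}$ with the graph norm $\norm{u}_{\mathcal L}^2 = \int_M u^2 d\mu + \int_M\abs{\nabla u}^2 \mathit{dVol}_g$, making it a Hilbert space.

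First, I would prove positivity of $\lambda_k$ for $k>0$. Suppose $\lambda_1(\mu,c)=0$. Then there is a sequence $u_n\in\mathcal L$ with $\int_M u_n\,d\mu=0$, $\int_M u_n^2\,d\mu=1$, and $\int_M\abs{\nabla u_n}^2\mathit{dVol}_g\to 0$. This sequence is bounded in $\mathcal{L}$, so by the compactness of \eqref{CE} a subsequence converges in $L_2(M,\mu)$ to some $u_\infty$ with $\int u_\infty\,d\mu=0$ and $\int u_\infty^2\,d\mu=1$; by weak compactness in $L_2^1(M,\mathit{Vol}_g)$ and lower semi-continuity of the Dirichlet energy the limit has vanishing gradient, hence is locally constant, contradicting orthogonality to constants (on a connected $M$). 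The same argument applied to the orthogonal complement of the span of lower eigenfunctions yields $\lambda_k(\mu,c)>0$ for every $k$.

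Next, for the existence of an eigenfunction I would argue inductively. Assuming eigenfunctions $u_0,\ldots,u_{k-1}$ corresponding to $\lambda_0(\mu,c),\ldots,\lambda_{k-1}(\mu,c)$ have been produced, spanning a subspace $V\subset\mathcal L$, I would take a minimising sequence $u_n$ for $\matheur R_c(\cdot,\mu)$ on $V^\perp\cap\mathcal L\setminus\{0\}$ with $\int u_n^2 d\mu=1$. The bound $\int\abs{\nabla u_n}^2\mathit{dVol}_g\to\lambda_k(\mu,c)$ together with the unit $L_2(\mu)$ norm makes $(u_n)$ bounded in $\mathcal{L}$. Compactness of \eqref{CE} gives strong $L_2(\mu)$ convergence of a subsequence to some $u$ with $\int u^2 d\mu=1$ and $u\in V^\perp$; weak convergence in $L_2^1(M,\mathit{Vol}_g)$ combined with lower semi-continuity gives $\int\abs{\nabla u}^2\mathit{dVol}_g\leqslant\lambda_k(\mu,c)$. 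The reverse inequality follows by the min-max characterisation applied to $\mathrm{span}(V\cup\{u\})$, so $u$ is an eigenfunction. The delicate point here is ensuring that the weak $L_2^1$ limit of $(u_n)$ coincides with the strong $L_2(\mu)$ limit; this I would handle by passing to a common subsequence and testing both limits against $C^\infty$ functions, using that $\mathcal L$ embeds continuously in both ambient spaces.

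Finally, finite-dimensionality of each eigenspace $E_k$ is a direct consequence of compactness. If $\{e_n\}\subset E_k$ were an infinite $L_2(\mu)$-orthonormal system, then each $e_n$ satisfies $\int\abs{\nabla e_n}^2\mathit{dVol}_g=\lambda_k(\mu,c)$, so $\norm{e_n}_{\mathcal L}^2=1+\lambda_k(\mu,c)$ is uniformly bounded. By the compactness of \eqref{CE} a subsequence of $(e_n)$ converges in $L_2(M,\mu)$, contradicting the fact that an orthonormal sequence in a Hilbert space cannot be Cauchy. I expect the most subtle technical issue to be verifying the regularity required to run the minimisation over $\mathcal L$ rather than $C^\infty(M)$ in the min-max principle; this would be checked by a density argument showing that smooth functions are dense in $\mathcal L$ with respect to the graph norm, so the infima agree.
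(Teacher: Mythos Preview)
Your proposal is correct and follows essentially the same approach as the paper: induction on $k$, a minimising sequence for the Rayleigh quotient in the orthogonal complement of the lower eigenfunctions, strong $L_2(\mu)$ convergence from compactness of~\eqref{CE}, weak $L_2^1$ convergence, and lower semi-continuity of the Dirichlet energy to conclude. The paper's proof is terser---it does not spell out the positivity argument (which is immediate once the minimiser exists, since a non-constant function orthogonal to constants has strictly positive Dirichlet energy) and dispatches finite-dimensionality with the phrase ``by the same compactness argument''---but the substance is identical to what you wrote.
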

\begin{proof}
We prove the theorem by induction in $k$. The statement on the existence of eigenfunctions is, clearly, true for $k=0$. Suppose the eigenfunctions exist for any $\ell\leqslant (k-1)$; there is a collection of pair-wise orthogonal eigenfunctions $\varphi_\ell$ corresponding to $\lambda_\ell(\mu)$, where $\ell\leqslant (k-1)$. We are to prove the existence of an eigenfunction for $\lambda_k(\mu)$ which is orthogonal to the span of the $\varphi_\ell$'s.

Let $(u_n)$ be a minimising sequence for the Rayleigh quotient $\matheur R_c(u,\mu)$ in the orthogonal complement of the span of the $\varphi_\ell$'s;
$$
\int_Mu_n^2d\mu=1,\quad \int_M\abs{\nabla  u_n}^2\mathit{dVol}_g\longrightarrow\lambda_k(\mu), \quad\text{ as
}n\to +\infty.
$$
Since the embedding~\eqref{CE} is compact, we conclude that $(u_n)$ contains a subsequence
converging weakly in $L^1_2(M,\mathit{Vol}_g)$ and strongly in $L_2(M,\mu)$ to a function $u\in\mathcal L$. Clearly, the limit function $u$ is orthogonal to the span of the $\varphi_\ell$'s, and its norm in $L_2(M,\mu)$ equals one. By lower semi-continuity of the Dirichlet energy, we further obtain
$$
\int_M\abs{\nabla  u}^2\mathit{dVol}_g\leqslant\lim\inf\int_M\abs{\nabla u_n}^2\mathit{dVol}_g=\lambda_k(\mu).
$$
Thus, we conclude that the function $u$ is indeed a minimiser for the Rayleigh quotient $\matheur R_c(u,\mu)$ among functions orthogonal to the span of the $\varphi_\ell$'s.

The statement on the dimension of eigenfunctions corresponding to equal eigenvalues follows by the same compactness argument.
\end{proof}
The existence of eigenfunctions lies at the heart of our method establishing the regularity of extremal metrics in Sect.~\ref{em1}. The hypotheses ensuring the existence are related to the so-called Maz'ja isocapacitory inequalities and studied in more detail in the following section.

\section{Measures with non-vanishing first eigenvalue}
\label{nonvan}
\subsection{No atoms lemma}
In this section we study Radon measures on $M$ with non-vanishing first eigenvalue. To avoid dealing with trivial pathologies we always assume that the measures under consideration are not Dirac measures. The first useful result shows that such measures have to be continuous, that is with trivial discrete part.
\begin{lemma}
\label{comp}
Let $(M,c)$ be a compact Riemann surface, possibly with boundary. Let $\mu$ be a non-continuous Radon measure on $M$ that is not a pure Dirac measure. Then the first eigenvalue $\lambda_1(\mu,c)$ vanishes.
\end{lemma}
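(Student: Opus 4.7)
The plan is to exploit the two-dimensional fact that a point has vanishing capacity, together with the presence of the atom, to produce a two-dimensional test space in the min-max formula whose Rayleigh quotient can be made arbitrarily small.

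First, I would extract the key structural data. Since $\mu$ is not continuous, its discrete part is nontrivial, so there is a point $p\in M$ with $m:=\mu(\{p\})>0$. Since $\supp\mu$ contains at least two distinct points, $\mu(M\setminus\{p\})>0$ as well, so $\mu(M)>m$. Fix a reference metric $g\in c$ and a small conformal chart around $p$.

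Second, I would use the fact that points in a two-dimensional surface have zero $L^2_1$-capacity, via a logarithmic cut-off. For $0<\rho<r$ with $\overline{B(p,r)}$ in the chart, a (smoothed) logarithmic cut-off yields a function $f_{\rho,r}\in C^\infty(M)$ with $0\leqslant f_{\rho,r}\leqslant 1$, $f_{\rho,r}\equiv 1$ on $B(p,\rho)$, $\supp f_{\rho,r}\subset B(p,r)$, and $\int_M\abs{\nabla f_{\rho,r}}^2\mathit{dVol}_g\leqslant C/\log(r/\rho)$. Take a sequence $(r_n,\rho_n)$ with $r_n\to 0$ and $\rho_n=r_n^n$, and set $f_n:=f_{\rho_n,r_n}$. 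Then $f_n(p)=1$, $\supp f_n\subset B(p,r_n)$, and $D(f_n):=\int\abs{\nabla f_n}^2\mathit{dVol}_g\to 0$.

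Third, I would apply the min-max characterisation to the two-dimensional subspace $\Lambda^2=\mathrm{span}(1,f_n)\subset C^\infty(M)$. A direct one-variable optimisation shows
$$
\sup_{u\in\Lambda^2}\matheur R_c(u,\mu)\;=\;\frac{D(f_n)}{V_n},\qquad V_n:=\int_M f_n^2\,d\mu-\frac{1}{\mu(M)}\left(\int_M f_n\,d\mu\right)^{\!2},
$$
so it suffices to show $V_n$ stays bounded below. Splitting over $\{p\}$ and $B(p,r_n)\setminus\{p\}$ and using $0\leqslant f_n\leqslant 1$, $f_n(p)=1$, we get
$$
m\leqslant\int_M f_n\,d\mu\leqslant\mu(B(p,r_n)),\qquad m\leqslant\int_M f_n^2\,d\mu\leqslant\mu(B(p,r_n)).
$$
By continuity of the Radon measure $\mu$ from above at the single point $p$, $\mu(B(p,r_n))\to m$; hence both integrals converge to $m$, and
$$
V_n\;\longrightarrow\;m-\frac{m^2}{\mu(M)}\;=\;m\!\left(1-\frac{m}{\mu(M)}\right)>0
$$
since $\mu(M)>m$. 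Combining with $D(f_n)\to 0$ and the min-max inequality gives $\lambda_1(\mu,c)=0$.

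The only delicate point is coordinating the two parameters $\rho_n\to 0$ and $r_n\to 0$: the Dirichlet energy requires $\log(r_n/\rho_n)\to\infty$, while the lower bound on $V_n$ requires $\mu(B(p,r_n))\to m$, i.e.\ $r_n\to 0$. Choosing $\rho_n=r_n^n$ handles both simultaneously, and this is really the only step that needs any care; everything else is bookkeeping with the min-max formula.
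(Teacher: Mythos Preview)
Your proof is correct and follows essentially the same route as the paper: pick an atom $p$ of mass $m>0$, build a small-energy cutoff near $p$ using that points have zero capacity, and test the min-max with $\mathrm{span}(1,\varphi)$, lower-bounding the variance term. The only cosmetic difference is that you shrink both radii to zero and compute the exact limit $V_n\to m(1-m/\mu(M))$, whereas the paper keeps a fixed coordinate ball $\Omega$, does not pass to a limit in the outer scale, and instead uses the elementary pointwise bound $\int(\varphi-\alpha)^2\,d\mu\geqslant \alpha^2\delta+(1-\alpha)^2m\geqslant m\delta/(m+\delta)$ valid for every $\alpha$.
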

\begin{proof}
For the sake of simplicity, we prove the lemma for the case when $M$ is closed only. Let $x\in M$ be a point of positive mass, $m=\mu(x)>0$. Denote by $\mu_*$ the measure $(\mu-m\delta_x)$, and let $\Omega$ be a coordinate ball around $x$ such that $\delta=\mu_*(M\backslash\Omega)$ is strictly positive. Since the capacity of a point is zero, then for a given $\varepsilon>0$ there exists a function $\varphi\in C^\infty_0(\Omega)$ such that $0\leqslant\varphi\leqslant 1$,
$$
\varphi=1\text{ in a neighbourhood of }x,\quad\text{and }\int_M\abs{\nabla\varphi}^2d\mathit{Vol}_g<\varepsilon.
$$
The integral above refers to a fixed metric $g\in c$. Denote by $\alpha$ the mean-value of the function $\varphi$,
$$
\alpha=\int_M\varphi\mathit{dVol}_g>0.
$$
Then by variational principle, we have
$$
\lambda_1(\mu,c)\int_M(\varphi-\alpha)^2d\mu\leqslant\int_M\abs{\nabla\varphi}^2\mathit{dVol}_g.
$$
The right-hand side is not greater than $\varepsilon$, and due to the choice of $\varphi$, we obtain
$$
\lambda_1(\mu,c)(\alpha^2\delta+(1-\alpha)^2m)\leqslant\varepsilon.
$$
By elementary analysis, the left-hand side above is bounded below by the quantity
$$
\left(\lambda_1(\mu,c)m\delta\right)/\left(m+\delta\right)\geqslant 0.
$$
Since $m$ and $\delta$ are strictly positive, and $\varepsilon$ is arbitrary, we conclude that the first eigenvalue $\lambda_1(\mu,c)$ has to vanish.
\end{proof}

\subsection{Bounds via fundamental tone and  isocapacitory inequalities}
We proceed with showing that measures with non-vanishing first eigenvalue satisfy certain Poincare inequalities.  The latter are closely related to the the notion of the fundamental tone, which we recall now.

For a subdomain $\Omega\subset M$ with non-empty boundary the {\em fundamental tone} $\lambda_*(\Omega,\mu)$ is defined as the infimum of the Rayleigh quotient $\matheur R_c(u,\mu)$ over all smooth functions supported in $\Omega$. The following lemma gives bounds for the first eigenvalue in terms of the fundamental tone; a similar statement in a slightly different context can be found in~\cite{Chen}.
\begin{lemma}
\label{tone}
Let $M$ be a compact surface, possibly with boundary, endowed with a conformal class of Riemannian metrics, and $\mu$ be a Radon probability measure on $M$. Then, we have 
$$
\inf\lambda_*(\Omega,\mu)\leqslant\lambda_1(\mu,c)\leqslant 2\inf\lambda_*(\Omega,\mu),
$$
where the infimums are taken over all subdomains $\Omega\subset M$ such that $0<\mu(\Omega)\leqslant 1/2$.
\end{lemma}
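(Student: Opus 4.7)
The plan is to establish the two inequalities separately by direct variational arguments, with test-functions adapted to each bound.

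For the upper estimate $\lambda_1(\mu,c)\leqslant 2\lambda_*(\Omega,\mu)$, I would fix a subdomain $\Omega$ with $0<\mu(\Omega)\leqslant 1/2$, take an arbitrary smooth $u$ supported in $\Omega$, and apply the min-max principle to the two-dimensional subspace spanned by $1$ and $u$. The supremum of the Rayleigh quotient on this subspace is attained at $\tilde u=u-\alpha$, where $\alpha=\int_M u\,d\mu$, giving
$$\lambda_1(\mu,c)\int_M\tilde u^2\,d\mu\leqslant\int_M\abs{\nabla u}^2\mathit{dVol}_g.$$
The main algebraic point is to bound $\int_M\tilde u^2\,d\mu$ below by a fixed fraction of $\int_M u^2\,d\mu$: expanding gives $\int_M\tilde u^2\,d\mu=\int_M u^2\,d\mu-\alpha^2$, while Cauchy--Schwarz combined with $\supp u\subset\Omega$ yields $\alpha^2\leqslant\mu(\Omega)\int_M u^2\,d\mu\leqslant(1/2)\int_M u^2\,d\mu$. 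Consequently $\lambda_1(\mu,c)\leqslant 2\matheur R_c(u,\mu)$, and taking the infimum over admissible $u$ gives $\lambda_1(\mu,c)\leqslant 2\lambda_*(\Omega,\mu)$ for each such $\Omega$.

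For the lower estimate $\inf_\Omega\lambda_*(\Omega,\mu)\leqslant\lambda_1(\mu,c)$, the idea is to cut an approximately optimal test-function for $\lambda_1$ at a $\mu$-median and localise on one of the two resulting halves. Fix $\varepsilon>0$ and pick a smooth $u$ with $\int_M u\,d\mu=0$ and $\matheur R_c(u,\mu)<\lambda_1(\mu,c)+\varepsilon$; let $a$ be a median of $u$ with respect to $\mu$, so that $\mu(\{u>a\})$ and $\mu(\{u<a\})$ are each at most $1/2$. Setting $v=(u-a)_+$ and $w=(u-a)_-$, the disjoint supports of $\nabla v$ and $\nabla w$ yield
$$\int_M\abs{\nabla v}^2\mathit{dVol}_g+\int_M\abs{\nabla w}^2\mathit{dVol}_g=\int_M\abs{\nabla u}^2\mathit{dVol}_g,$$
while $\int_M v^2\,d\mu+\int_M w^2\,d\mu=\int_M(u-a)^2\,d\mu=\int_M u^2\,d\mu+a^2\geqslant\int_M u^2\,d\mu$ (using $\int u\,d\mu=0$). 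It follows that at least one of $v,w$ has Rayleigh quotient no larger than $\matheur R_c(u,\mu)$.

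To match the smoothness requirement in the definition of $\lambda_*(\Omega,\mu)$, I would finish by mollifying the winner of the previous step. Pick a smooth nondecreasing $\phi_\delta\colon\mathbb R\to\mathbb R$ with $\phi_\delta=0$ on $(-\infty,\delta]$ and $\phi_\delta(x)=x-2\delta$ for $x\geqslant 3\delta$; then $\phi_\delta(u-a)$ is smooth, compactly supported in the open set $\Omega_\delta=\{u>a+\delta/2\}$, and $\mu(\Omega_\delta)\leqslant\mu(\{u>a\})\leqslant 1/2$. Dominated convergence gives $\matheur R_c(\phi_\delta(u-a),\mu)\to\matheur R_c(v,\mu)$ as $\delta\to 0$, so $\inf_\Omega\lambda_*(\Omega,\mu)\leqslant\matheur R_c(u,\mu)<\lambda_1(\mu,c)+\varepsilon$, and letting $\varepsilon\to 0$ completes the bound. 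The main technical obstacle is this mollification step together with checking that no degeneracy occurs, that is, that neither of $v,w$ is $\mu$-trivial; this is ensured because $\int u\,d\mu=0$ and $\matheur R_c(u,\mu)<\infty$ prevent $u$ from being $\mu$-a.e.\ constant.
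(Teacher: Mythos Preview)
Your proof is correct and follows essentially the same route as the paper: mean-subtraction plus Cauchy--Schwarz for the upper bound, and a median cut of a near-optimal test function for the lower bound. The only noteworthy difference is that you are more careful about the smoothness requirement in the definition of $\lambda_*(\Omega,\mu)$, inserting an explicit mollification $\phi_\delta(u-a)$ and checking non-degeneracy, whereas the paper simply plugs the Lipschitz truncations $u_c^\pm$ directly into the fundamental tone and leaves the approximation implicit.
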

\begin{proof}
First we prove the upper bound. Let $u$ be a smooth function supported in $\Omega$, and we suppose that the integral $\int u^2d\mu$ equals one. Denote by $\bar u$ its mean value, that is the integral $\int ud\mu$. Then we have
$$
\int(u-\bar u)^2d\mu=1-\bar u^2\geqslant 1-\left(\int u^2d\mu\right)\cdot\mu(\Omega)
=1-\mu(\Omega)=\mu(M\backslash\Omega).
$$
From this, we conclude that
$$
\lambda_1(\mu,c)\leqslant\lambda_*(\Omega,\mu)/\mu(M\backslash\Omega).
$$
Since the domain $\Omega$ is arbitrary, we further obtain
\begin{multline*}
\lambda_1(\mu,c)\leqslant\inf_{0<\mu(\Omega)<1}\min\left\{\lambda_*(\Omega,\mu)/\mu(M\backslash\Omega), \lambda_*(M\backslash\Omega,\mu)/\mu(\Omega)\right\}\\
\leqslant\inf_{0<\mu(\Omega)\leqslant 1/2}\lambda_*(\Omega,\mu)/\mu(M\backslash\Omega)\leqslant 2\inf_{0<\mu(\Omega)\leqslant 1/2}\lambda_*(\Omega,\mu).
\end{multline*}
We proceed with demonstrating the lower bound. Let $u$ be a test-function for the first eigenvalue, that is
\begin{equation}
\label{test}
\int u^2d\mu=1\quad\text{and}\quad\int ud\mu=0.
\end{equation}
Let $c$ be a median of $u$, that is a real number such that
$$
\mu(u<c)\leqslant 1/2\quad\text{and}\quad\mu(u>c)\leqslant 1/2.
$$
Denote by $u_c^+$ and $u_c^-$ the non-negative and non-positive parts of $(u-c)$, and by $\Omega^\pm$ their supports respectively. First, note that
$$
\int\abs{\nabla u}^2d\mathit{Vol}_g=\int\abs{\nabla u_c^+}^2d\mathit{Vol}_g+
\int\abs{\nabla u_c^-}^2d\mathit{Vol}_g.
$$
Using this relation, we obtain
\begin{multline*}
\matheur R_c(u,\mu)\geqslant\lambda_*(\Omega^+)\int(u_c^+)^2d\mu+\lambda_*(\Omega^-)\int(u_c^-)^2d\mu\\
\geqslant\inf_{0<\mu(\Omega)\leqslant 1/2}\lambda_*(\Omega)\left(\int(u_c^+)^2d\mu+\int(u_c^-)^2d\mu\right)\\
=\inf_{0<\mu(\Omega)\leqslant 1/2}\lambda_*(\Omega)\int(u_c^+-u_c^-)^2d\mu=\inf_{0<\mu(\Omega)\leqslant 1/2}\lambda_*(\Omega)\int(u-c)^2d\mu.
\end{multline*}
By~\eqref{test} the last integral clearly equals $(1+c^2)$, and we conclude that 
$$
\matheur R_c(u,\mu)\geqslant\inf_{0<\mu(\Omega)\leqslant 1/2}\lambda_*(\Omega).
$$
Taking the infimum over all test-functions, we thus get the lower bound for $\lambda_1(u,\mu)$.
\end{proof}
One of the consequences of this lemma is the characterisation of measures with non-vanishing first eigenvalue $\lambda_1(\mu,c)$ via isocapacitory inequalities. To explain this we introduce more notation.

Let $\Omega\subset M$ be an open subdomain. For any compact set $F\subset\Omega$ the capacity $\CAP(F,\Omega)$ is defined as
$$
\CAP(F,\Omega)=\inf\left\{\int\abs{\nabla\varphi}^2\mathit{dVol}_g : \varphi\in C_0^\infty(\Omega), \varphi\equiv 1\text{ on }F \right\}.
$$
Further, by the {\em isocapacity constant} $\beta(\Omega,\mu)$ of $\Omega$ we call the quantity
$$
\sup\left\{\mu(F)/\CAP(F,\Omega): F\subset\Omega\text{ is a compact set }\right\}.
$$
By the results of Maz'ja~\cite[Sect.~2.3.3]{Ma}, see also~\cite{Chen}, the isocapacity constant and the fundamental tone are related by the following inequalities:
\begin{equation}
\label{MaIn}
(4\beta(\Omega,\mu))^{-1}\leqslant\lambda_*(\Omega,\mu)\leqslant(\beta(\Omega,\mu))^{-1}.
\end{equation}
Combining these with Lemma~\ref{tone}, we obtain the following corollary.
\begin{corollary}
\label{iff}
Under the hypotheses of Lemma~\ref{tone}, we have
$$
\inf (4\beta(\Omega,\mu))^{-1}\leqslant\lambda_1(\mu,c)\leqslant 2\inf (\beta(\Omega,\mu))^{-1},
$$
where the infimums are taken over all subdomains $\Omega\subset M$ such that $0<\mu(\Omega)\leqslant 1/2$. In particular, the first eigenvalue $\lambda_1(\mu,c)$ is positive if and only if the isocapacity constant $\beta(\Omega,\mu)$ is bounded as $\Omega$ ranges over all subdomains such that $0<\mu(\Omega)\leqslant 1/2$.
\end{corollary}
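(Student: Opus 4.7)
The plan is to obtain the corollary essentially as a formal chaining of Lemma~\ref{tone} with the Maz'ja inequalities~\eqref{MaIn}, both of which refer to the same family of subdomains $\Omega\subset M$ with $0<\mu(\Omega)\leqslant 1/2$.

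First I would derive the lower bound. Taking the infimum over such $\Omega$ in the left-hand inequality of~\eqref{MaIn} gives
$$
\inf\lambda_*(\Omega,\mu)\geqslant\inf(4\beta(\Omega,\mu))^{-1},
$$
and combining this with the lower bound from Lemma~\ref{tone} yields $\lambda_1(\mu,c)\geqslant\inf(4\beta(\Omega,\mu))^{-1}$. The upper bound is analogous: taking infimums in the right-hand inequality of~\eqref{MaIn} gives $\inf\lambda_*(\Omega,\mu)\leqslant\inf(\beta(\Omega,\mu))^{-1}$, and multiplying by $2$ and applying the upper bound from Lemma~\ref{tone} produces $\lambda_1(\mu,c)\leqslant 2\inf(\beta(\Omega,\mu))^{-1}$.

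For the final equivalence I would observe that, since $\beta(\Omega,\mu)$ is positive, one has $\inf(\beta(\Omega,\mu))^{-1}=(\sup\beta(\Omega,\mu))^{-1}$, where both infimum and supremum run over subdomains with $0<\mu(\Omega)\leqslant 1/2$. The two-sided estimate just established then shows that $\lambda_1(\mu,c)>0$ if and only if $\sup\beta(\Omega,\mu)<\infty$, i.e.\ the isocapacity constant is bounded over that family of subdomains.

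There is no substantive obstacle here: the entire argument is a bookkeeping exercise once Lemma~\ref{tone} and~\eqref{MaIn} are in hand, and the only point worth verifying carefully is that both infimums in the chain are taken over the same class of subdomains, so that the inequalities compose without losing the range of $\Omega$.
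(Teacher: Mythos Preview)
Your proposal is correct and matches the paper's approach: the paper simply states that the corollary follows by ``combining these [the Maz'ja inequalities~\eqref{MaIn}] with Lemma~\ref{tone}'', which is exactly the chaining you carry out. Your added observation that $\inf(\beta(\Omega,\mu))^{-1}=(\sup\beta(\Omega,\mu))^{-1}$ makes the final equivalence explicit, but there is nothing beyond what the paper intends.
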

As another consequence, we mention the following statement.
\begin{corollary}[Linear isocapacitory inequality]
\label{lisocap}
Under the hypotheses of Lemma~\ref{tone}, the first eigenvalue $\lambda_1(\mu,c)$ does not vanish if and only if there exists a positive constant $C>0$ such that the measure $\mu$ satisfies the following inequality
$$
\mu(F)\leqslant C\cdot\CAP(F,\Omega)
$$ 
for any closed subset $F\subset\Omega$ and any subdomain $\Omega$ such that $0<\mu(\Omega)\leqslant 1/2$. In particular, if a measure $\mu$ with non-vanishing $\lambda_1(\mu,c)$ is not a pure Dirac measure, then it vanishes on sets of zero capacity.
\end{corollary}
The last statement of the corollary follows from the linear isocapacitory inequality together with Lemma~\ref{comp}. The linear isocapacitory inequality also implies that 
$$
\mu(B(x,r))\leqslant C_*\cdot\ln^{-1}(1/r)
$$ 
for some constant $C_*$ and all sufficiently small $r>0$. The last relation can be also obtained directly from the hypothesis $\lambda_1(\mu,c)>0$ by constructioning appropriate test-functions, thus avoiding Lemma~\ref{tone} and the Maz'ja inequality~\eqref{MaIn}.

\subsection{Existence of eigenfunctions and Maz'ja theorems}
As we know, see Sect.~\ref{prems}, the existence of eigenfunctions is ensured by the {\em compact embedding}
of the spaces
\begin{equation}
\label{ce}
\mathcal{L}=L_2(M,\mu)\cap L_2^1(M,\mathit{Vol}_g)\subset L_2(M,\mu).
\end{equation}
In this section we describe necessary and sufficient conditions for this hypothesis. First, recall that a Radon measure is called {\em completely singular} if it is supported in a Borel set $\Sigma$ of zero Lebesgue measure, that is $\mu (M\backslash\Sigma)=0$. The measures that are not completely singular are precisely the measures with non-trivial absolutely continuous parts. The following auxiliary lemma reduces the compactness question to the compact embedding results, obtained by Maz'ja in~\cite{Ma}.
\begin{lemma}
\label{re2ma}
Let $M$ be a compact surface, possibly with boundary, endowed with a conformal class $c$ of Riemannian metrics, and $\mu$ be a Radon measure on $M$.
\begin{itemize}
\item[(i)] Suppose that the embedding~\eqref{ce} is compact. Then the space $W^{1,2}(M,\mathit{Vol}_g)$, where $g\in c$, embeds compactly into $L_2(M,\mu)$.
\item[(ii)] Conversely, suppose that the measure $\mu$ is not completely singular, has a positive first eigenvalue $\lambda_1(\mu,c)$, and the space $W^{1,2}(M,\mathit{Vol}_g)$ embeds compactly into $L_2(M,\mu)$. Then the embedding~\eqref{ce} is compact.
\end{itemize}
\end{lemma}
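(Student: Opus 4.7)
The plan is to identify $\mathcal L = W^{1,2}(M, Vol_g) \cap L_2(M,\mu)$ (using that $L^1_2=W^{1,2}$ on a compact manifold, in the sense of Maz'ja) endowed with the intersection norm $\|u\|^2_\mathcal L = \|u\|^2_{L_2(\mu)} + \|\nabla u\|^2_{L_2(Vol_g)}$, and to compare the two compact embeddings into $L_2(\mu)$ using the classical fact that $W^{1,2}$-convergence yields quasi-everywhere convergence of quasi-continuous representatives along a subsequence (see Maz'ja~\cite{Ma}).

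For part (ii): given $(u_n)\subset\mathcal L$ bounded, I would write $u_n = v_n + c_n$ with $c_n = \int_M u_n\,dVol_g / Vol_g(M)$. The Poincar\'e--Wirtinger inequality on $(M,g)$ (with Neumann boundary conditions when $\partial M\neq\emptyset$) yields $\|v_n\|_{L_2(Vol_g)}\leqslant C\|\nabla u_n\|_{L_2}$, so $(v_n)$ is bounded in $W^{1,2}$; the hypothesised compactness of $W^{1,2}\hookrightarrow L_2(\mu)$ then produces a subsequence $v_n\to v$ in $L_2(\mu)$. A triangle-inequality bound $|c_n|\mu(M)^{1/2}\leqslant\|u_n\|_{L_2(\mu)}+\|v_n\|_{L_2(\mu)}$ shows $(c_n)$ is a bounded sequence of scalars; extracting a subsubsequence so that $c_n\to c$, we conclude $u_n\to v+c$ in $L_2(\mu)$. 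The hypotheses on $\mu$ only serve to make the embedding $W^{1,2}\hookrightarrow L_2(\mu)$ well defined and non-degenerate: $\lambda_1(\mu,c)>0$ together with Corollary~\ref{iff} ensures $\mu$ charges no set of zero capacity, so quasi-continuous representatives make sense, and non-complete-singularity rules out a trivial scenario.

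For part (i): assuming $\mathcal L\hookrightarrow L_2(\mu)$ is compact, Proposition~\ref{ex:eigen} gives $\lambda_1(\mu,c)>0$ and hence, via Corollary~\ref{iff}, $\mu$ charges no set of zero capacity. The essential step is the trace inequality $\|u\|_{L_2(\mu)}\leqslant C\|u\|_{W^{1,2}}$ for smooth~$u$; once it holds, $W^{1,2}\subset\mathcal L$ continuously, and compactness of $W^{1,2}\hookrightarrow L_2(\mu)$ follows immediately from the compactness of $\mathcal L\hookrightarrow L_2(\mu)$. I would prove the trace inequality by contradiction. Suppose there exist $v_n\in C^\infty$ with $\|v_n\|_{L_2(\mu)}=1$ and $\|v_n\|_{W^{1,2}}\to 0$; then $v_n$ is bounded in $\mathcal L$, so by the compact embedding a subsequence converges in $L_2(\mu)$ to some $v$ with $\|v\|_{L_2(\mu)}=1$. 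On the other hand $v_n\to 0$ in $W^{1,2}$, so by the classical capacity convergence result a further subsequence of quasi-continuous representatives converges to $0$ quasi-everywhere; since the $v_n$ are smooth they coincide with their q.c. representatives, and since $\mu$ does not charge zero-capacity sets we obtain $v_n\to 0$ $\mu$-almost everywhere. But $L_2(\mu)$-convergence also provides (after a final subsequence) $v_n\to v$ $\mu$-a.e., forcing $v\equiv 0$ in $L_2(\mu)$, contradicting $\|v\|_{L_2(\mu)}=1$.

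The main obstacle is in part (i): invoking the quasi-everywhere convergence principle rigorously in our setting and ensuring that the cancellation between the $L_2(\mu)$-limit and the quasi-everywhere $W^{1,2}$-limit is legitimate. This is the only point where the structural compatibility of $\mu$ with the capacity theory (coming from $\lambda_1>0$) is used decisively; the rest of the argument is routine.
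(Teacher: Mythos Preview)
Your proposal is correct, and both parts follow a different route from the paper's own proof.

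For part~(i), both arguments reduce to proving the trace inequality $\|u\|_{L_2(\mu)}\leqslant C\|u\|_{W^{1,2}}$ (so that $W^{1,2}\subset\mathcal L$ continuously, and compactness is inherited). The paper proves this inequality \emph{constructively}: it covers $M$ by small balls on which the fundamental tone $\lambda_*(\Omega_i,\mu)$ is positive (this is where $\lambda_1>0$ and Lemma~\ref{tone} enter), takes a partition of unity, and estimates $\int(u\varphi_i)^2d\mu$ directly in terms of $\|u\|_{W^{1,2}}$. Your argument is a global contradiction using the potential-theoretic fact that $W^{1,2}$-convergence forces q.e.\ convergence along a subsequence. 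Your route is shorter but imports more machinery; the paper's route is elementary and self-contained, and it also makes explicit the local quantitative estimate that drives the later isocapacitory discussion.

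For part~(ii), the paper again argues by contradiction, but proves a different intermediate inequality: it shows $\int u^2\,d\mathit{Vol}_g\leqslant C\int|\nabla u|^2\,d\mathit{Vol}_g$ on the subspace $\mathcal L_0$ of zero $\mu$-mean functions, and it is here that all three hypotheses (positive $\lambda_1$, $\mu$ not completely singular, and compactness of $W^{1,2}\hookrightarrow L_2(\mu)$) are used in an essential way. Your decomposition $u_n=v_n+c_n$ via the $\mathit{Vol}_g$-mean, combined with the ordinary Poincar\'e--Wirtinger inequality on $(M,g)$, bypasses this entirely and shows that the compactness conclusion actually follows from the compactness of $W^{1,2}\hookrightarrow L_2(\mu)$ alone. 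This is a genuine simplification: the extra hypotheses on $\mu$ are needed elsewhere in the paper, but your argument reveals they are not needed for this particular implication.
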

\begin{proof}
We start with the proof of the statement~$(i)$; it is sufficient to show that any sequence~$(u_n)$ bounded in $W^{1,2}(M,\mathit{Vol}_g)$ is also bounded in the space $L_2(M,\mu)$. Since the embedding~\eqref{ce} is compact, by Prop.~\ref{ex:eigen} the first eigenvalue is positive, and by Lemma~\ref{tone} so is the fundamental tone $\lambda_*(\Omega)$ of any sufficiently small subdomain $\Omega\subset M$. Let $(\Omega_i)$ be a finite covering of $M$ by such subdomains, and $(\varphi_i)$ be the corresponding partition of unity. Then we obtain
\begin{multline*}
\int(u_n\varphi_i)^2d\mu\leqslant 2\lambda_*^{-1}(\Omega_i)\left(\int\abs{\nabla u_n}^2\varphi_i^2 \mathit{dVol}_g +\int\abs{\nabla\varphi_i}^2u_n^2\mathit{dVol}_g\right) \\
\leqslant C_i\left(\int\abs{\nabla u_n}^2\mathit{dVol}_g+\int u_n^2\mathit{dVol}_g\right),
\end{multline*}
where the positive constant $C_i$ depends on $\lambda_*(\Omega_i)$ and the $\varphi_i$, and the claim follows by summing up these inequalities.

Now we demonstarte the statement~$(ii)$. First, denote by $\mathcal L_0$ the subspace of $\mathcal L$ formed by functions with zero mean value with respect to $\mu$. It is sufficient to show that any bounded sequence of smooth functions in $\mathcal L_0$ is also bounded in $W^{1,2}(M,\mathit{Vol}_g)$. More precisely, we claim that there exists a constant $C$ such that for any smooth function $u\in\mathcal L_0$ the inequality
$$
\int_M u^2 d\mathit{Vol}_g\leqslant C\cdot\int_M\abs{\nabla u}^2 d\mathit{Vol}_g
$$
holds. Indeed, suppose the contrary. Then there exists a sequence $(u_n)$ such that
\begin{equation}
\label{cont}
\int_M u_n^2 d\mathit{Vol}_g=1,\quad\text{and}\quad\int_M\abs{\nabla u_n}^2 d\mathit{Vol}_g\rightarrow 0. 
\end{equation}
Since the first eigenvalue does not vanish, we also have
\begin{equation}
\label{lambda}
\int_M u^2 d\mu\leqslant\lambda_1^{-1}(\mu,c)\cdot\int_M\abs{\nabla u}^2 d\mathit{Vol}_g
\end{equation}
for any $u\in\mathcal L_0$. Then, after a selection of a subsequence, the  $u_n$'s converge weakly in $\mathcal L_0$, and also strongly in $L_2(M,\mathit{Vol}_g)$, to some function $v\in\mathcal L_0$. By the second relation in~\eqref{cont} this limit function has to be constant almost everywhere with respect to $\mathit{Vol}_g$. Further, relation~\eqref{lambda} shows that $v$ vanishes almost everywhere with respect to the measure $\mu$. Since $\mu$ is not completely singular, then from the above we conclude that $v$ vanishes almost everywhere also with respect to $\mathit{Vol}_g$. However, from~\eqref{cont} we see that the $L_2$-norm of $v$ equals  one. Thus, we arrive at a contradiction, and the claim is proved.
\end{proof}
By the results of Maz'ja the compactness of the embedding $W^{1,2}(M,\mathit{Vol}_g)$ into the space $L_2(M,\mu)$ is characterised by the decay of the isocapacity constant on small balls. More precisely, the following result is essentially contained in~\cite{Ma}, see also~\cite[Sect.~7]{AH}.
\begin{MT1}
Let $\mu$ be a Radon measure supported in a bounded domain $\Omega\subset\mathbf R^2$ with smooth boundary. Then the embedding  $W^{1,2}(\Omega,\mathit{Vol}_g)$ into  $L_2(\Omega,\mu)$ is compact if and only if $\sup_x\beta(B(x,r),\mu)\to 0$ as $r\to 0$, where the supremum is taken over $x\in\Omega$.
\end{MT1}
Combining this result with Lemma~\ref{re2ma}, we obtain the following consequence.
\begin{corollary}
\label{cor:cap}
Under the hypotheses of Lemma~\ref{re2ma}, we have
\begin{itemize}
\item[(i)] if the embedding~\eqref{ce} is compact, then
\begin{equation}
\label{cap_decay}
\sup_{x\in M}\beta(B(x,r),\mu)\longrightarrow 0\qquad r\to 0;
\end{equation}
\item[(ii)] if the measure $\mu$ is not completely singular, has positive eigenvalue, and satisfies~\eqref{cap_decay}, then the embedding~\eqref{ce} is compact.
\end{itemize}
\end{corollary}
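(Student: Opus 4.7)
My plan is to derive both implications by combining Lemma~\ref{re2ma} with the First Maz'ja theorem, bridging the manifold-level statement and the Euclidean statement via a localisation over finitely many conformal coordinate charts on $M$. The key observation that makes the localisation painless is the conformal invariance of the Dirichlet integral in dimension two, which ensures that the capacity $\CAP(F,\Omega)$ (and hence the isocapacity constant $\beta$) computed on $M$ agrees with the Euclidean capacity when $\Omega$ sits inside a conformal chart.

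For part~$(i)$, I would begin by invoking Lemma~\ref{re2ma}(i) to pass from compactness of~\eqref{ce} to compactness of $W^{1,2}(M,\mathit{Vol}_g)\hookrightarrow L_2(M,\mu)$. Next I would cover $M$ by finitely many conformal charts $(U_\alpha,\phi_\alpha)$ with $\phi_\alpha(U_\alpha)=\Omega_\alpha$, a bounded smooth domain in $\mathbf R^2$. A bounded linear extension operator $W^{1,2}(\Omega_\alpha)\to W^{1,2}(M,\mathit{Vol}_g)$ transfers the global compactness to a compact embedding $W^{1,2}(\Omega_\alpha)\hookrightarrow L_2(\Omega_\alpha,\phi_\alpha^{*}\mu)$ on each chart. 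The First Maz'ja theorem then yields $\sup_{x\in\Omega_\alpha}\beta(B(x,r),\phi_\alpha^{*}\mu)\to 0$ as $r\to 0$, and taking the maximum over the finite collection of charts produces~\eqref{cap_decay}.

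For part~$(ii)$, under the three hypotheses I would again cover $M$ by finitely many conformal charts $(U_\alpha,\phi_\alpha)$ and fix a subordinate partition of unity $(\chi_\alpha)$. The decay condition~\eqref{cap_decay}, together with the converse direction of the First Maz'ja theorem, makes each local embedding $W^{1,2}(\Omega_\alpha)\hookrightarrow L_2(\Omega_\alpha,\phi_\alpha^{*}\mu)$ compact. Given a bounded sequence $(u_n)$ in $W^{1,2}(M,\mathit{Vol}_g)$, the sequences $(\chi_\alpha u_n)$ are bounded in $W^{1,2}(\Omega_\alpha)$, so a standard diagonal extraction yields a subsequence converging in each $L_2(\Omega_\alpha,\phi_\alpha^{*}\mu)$; reassembling through the partition gives convergence in $L_2(M,\mu)$. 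This proves compactness of $W^{1,2}(M,\mathit{Vol}_g)\hookrightarrow L_2(M,\mu)$, and since $\mu$ is not completely singular and $\lambda_1(\mu,c)>0$ by hypothesis, Lemma~\ref{re2ma}(ii) finishes the argument.

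The main obstacle I expect is the localisation in direction~$(i)$: one must check carefully that the abstract compact embedding on $M$ descends to the planar model. This rests on two technical points — the existence of a bounded Sobolev extension from $\Omega_\alpha$ into $M$, and the matching of the intrinsic capacity on $M$ with the Euclidean one in Maz'ja's formulation, which follows from conformal invariance in dimension two. Once these identifications are secured, both implications reduce to a bookkeeping exercise combining the three ingredients cited above.
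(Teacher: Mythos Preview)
Your proposal is correct and follows essentially the same approach as the paper, which simply states that the corollary is obtained by ``combining this result [the First Maz'ja theorem] with Lemma~\ref{re2ma}''. You have supplied the natural localisation details (finite cover by conformal charts, partition of unity, conformal invariance of capacity) that make the combination work on a surface rather than a Euclidean domain; the paper leaves these implicit.
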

\begin{remark*}
First, mention that due to~\eqref{MaIn} the decay hypothesis on the isocapacity constant is equivalent to the growth of the fundamental tone on small balls. Second, following Maz'ja~\cite{Ma}, one can also consider the {\em isocapacity function} $\beta_r(\Omega)$, defined as the quantity
$$
\sup\left\{\mu(F)/\CAP(F,\Omega) : F\subset\Omega\text{ is a compact set, }\mathit{diam}(F)\leqslant r\right\}.
$$
Then the hypothesis~\eqref{cap_decay} in the corollary above can be replaced by the supposition that $M$ can be covered by open sets $\Omega_i$ whose isocapacity functions $\beta_r(\Omega_i)$ converge to zero as $r\to 0$.
\end{remark*}
Recall that by Prop.~\ref{ex:eigen}, the compactness of the embedding~\eqref{ce}  for a measure $\mu$ implies that its first eigenvalue $\lambda_1(\mu, c)$ does not vanish. However, the {\em converse does not hold}. More precisely, by Corollary~\ref{cor:cap} the measures for which the embedding~\eqref{ce} is compact satisfy the following (weaker than~\eqref{cap_decay}) hypothesis
\begin{equation}
\label{cap_ball}
\sup_{x\in M}\mu(B(x,r))\ln(1/r)\rightarrow 0\qquad\text{as~ }r\to 0.
\end{equation}
We claim that there are measures with positive first eigenvalues for which this hypothesis fails. For this it is sufficient to construct a compactly supported measure in $\mathbf R^2$ with bounded logarithmic potential such that the quantity  $\mu(B(x,r))\ln(1/r)$ does not converge to zero uniformly. The boundedness of the potential implies that the isocapacity constant $\beta(\Omega, \mu)$ is bounded as $\Omega$ ranges over a certain class of subdomains and, by Cor.~\ref{iff}, one concludes that the first eigenvalue has to be positive. (The details can be communicated on request.)

The next statement says that a slightly stronger decay hypothesis than~\eqref{cap_ball} is often sufficient for the embedding compactness. 
\begin{lemma}
\label{sc}
Let $M$ be a compact surface, possibly with boundary, endowed with a conformal class of Riemannian metrics, and $\mu$ be a Radon measure on $M$. Suppose that $\mu$ is not completely singular, and its values on small balls satisfy the relation:
\begin{equation}
\label{scH}
\sup_{x\in M}\mu(B(x,r))\ln^q(1/r)\rightarrow 0\qquad\text{as~ }r\to 0,
\end{equation}
where $q>1$. Then the embedding~\eqref{ce} is compact and, in particular, the first eigenvalue $\lambda_1(\mu,c)$ is positive.
\end{lemma}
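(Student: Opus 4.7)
The plan is to verify the uniform isocapacity decay~\eqref{cap_decay} on small geodesic balls, after which Corollary~\ref{cor:cap}(ii) applied to the non-completely-singular measure $\mu$ will yield the compactness of the embedding~\eqref{ce}; the positivity of $\lambda_1(\mu,c)$ is then immediate from Proposition~\ref{ex:eigen}.

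First, I would establish a potential-theoretic upper bound for the isocapacity constant. Given a precompact domain $\Omega\subset M$, let $G_\Omega$ denote the Dirichlet Green function of $-\Delta_g$ on $\Omega$, and set $U^\mu_\Omega(y)=\int_\Omega G_\Omega(y,z)\,d\mu(z)$. For any compact $F\subset\Omega$ the equilibrium measure $\nu_F$ of $F$ has total mass $\nu_F(\Omega)=\CAP(F,\Omega)$, and its capacitary potential equals $1$ on $F$ quasi-everywhere and is bounded by $1$ on $\Omega$. Fubini then gives
\begin{equation*}
\mu(F)\leq\int_\Omega U^{\nu_F}\,d\mu=\int_\Omega U^\mu_\Omega\,d\nu_F\leq\|U^\mu_\Omega\|_{L^\infty(\Omega)}\cdot\CAP(F,\Omega),
\end{equation*}
so $\beta(\Omega,\mu)\leq\|U^\mu_\Omega\|_{L^\infty(\Omega)}$.

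Next, I would apply this bound with $\Omega=B(x,r)$, combining it with the standard two-dimensional Green function estimate $G_{B(x,r)}(y,z)\leq(2\pi)^{-1}\log(Cr/\dist(y,z))$, valid in the conformal class $c$ after a routine reduction to the Euclidean disk via a fixed reference metric. The layer-cake identity then yields
\begin{equation*}
U^\mu_{B(x,r)}(y)\leq\frac{1}{2\pi}\int_0^{Cr}\mu(B(y,s))\,\frac{ds}{s}.
\end{equation*}
By~\eqref{scH} one has $\mu(B(y,s))\leq\eta(s)/\log^q(1/s)$ with $\eta(s)\to 0$, and since $q>1$ the integral $\int_0^{Cr}ds/(s\log^q(1/s))$ equals $((q-1)\log^{q-1}(1/(Cr)))^{-1}$; hence $\sup_y U^\mu_{B(x,r)}(y)\to 0$ uniformly in $x$ as $r\to 0$, which is precisely~\eqref{cap_decay}. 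The main technical obstacle I expect is the careful verification of the Green function bound in the curved setting --- it ultimately follows from the conformal invariance of the Dirichlet energy, but the conformal factor from switching between $g\in c$ and a local Euclidean model has to be controlled uniformly on small balls.
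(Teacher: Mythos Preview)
Your Green-potential route to the isocapacity decay~\eqref{cap_decay} is correct and is a pleasant alternative to the paper's argument, which instead goes through Maz'ja's extension theorem (the ball inequality $\mu(B)\leq C\,\capty(B)^q$ propagates to arbitrary compacta, Claim~\ref{new:claim}) and the Second Maz'ja theorem. But the final assembly contains a genuine circularity. Corollary~\ref{cor:cap}(ii) carries \emph{three} hypotheses --- $\mu$ not completely singular, $\lambda_1(\mu,c)>0$, and~\eqref{cap_decay} --- and you propose to obtain the middle one from Proposition~\ref{ex:eigen} \emph{after} the corollary has delivered compactness. That is a loop: positivity of $\lambda_1$ must be secured \emph{before} Corollary~\ref{cor:cap}(ii) can be invoked. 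A second, smaller gap hides in the Fubini step $\mu(F)\leq\int U^{\nu_F}\,d\mu$: the equilibrium potential equals $1$ on $F$ only quasi-everywhere, so this inequality presupposes that $\mu$ does not charge polar sets --- again a consequence of $\lambda_1>0$, not something available a priori.

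Both gaps are reparable with material you already have. Your bound $\beta(\Omega,\mu)\leq\|U^\mu_\Omega\|_{L^\infty(\Omega)}$ holds for any precompact $\Omega$, and by Green-function monotonicity $U^\mu_{\Omega'}\leq U^\mu_\Omega$ whenever $\Omega'\subset\Omega$. Running the same layer-cake estimate with $\Omega=G$ a \emph{fixed} conformally-Euclidean coordinate disk (where the logarithmic upper bound on $G_G$ still holds) gives $\|U^\mu_G\|_\infty<\infty$, since $\int_0^{\mathrm{const}}ds/(s\log^q(1/s))$ converges for $q>1$. Hence $\beta(\Omega',\mu)$ is bounded uniformly over all $\Omega'\subset G$; after localising $\mu$ to finitely many such charts, Corollary~\ref{iff} yields $\lambda_1>0$ independently, and only then does Corollary~\ref{cor:cap}(ii) apply. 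This is exactly the function of Claim~\ref{new:claim} in the paper's proof: it first establishes the linear isocapacitory inequality~\eqref{1:cap}, uses Corollary~\ref{iff} to get $\lambda_1>0$, and only afterwards appeals to Lemma~\ref{re2ma}. For the polar-set issue, restrict the equilibrium argument to regular compacta (finite unions of closed disks suffice) and recover general $F$ by outer regularity of both $\mu$ and $\CAP$.
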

The hypotheses above actually  yield a stronger conclusion: the space $\mathcal L$ in this case embeds compactly into $L_{2q}(M,\mu)$. Conversely, the compact embedding into $L_{2q}(M,\mu)$ implies  relation~\eqref{scH}, under the hypotheses on the measure above. The proof appears at the end of the section; it is based on the following theorem due to Maz'ja, contained in~\cite[Sect.~8.8]{Ma}, see also~\cite[Sect.~7]{AH}. 
\begin{MT2}
Let $\mu$ be a Radon measure supported in a bounded domain $\Omega\subset\mathbf R^2$ with smooth boundary. Then for any $q>1$ the embedding of $W^{1,2}(\Omega,\mathit{Vol}_g)$ into the space $L_{2q}(\Omega,\mu)$ is compact if and only if
the measure satisfies the following decay property
$$
\sup_{x}\mu(B(x,r))\ln^q(1/r)\to 0\quad\text{ as }r\to 0,
$$
where the sup is taken over all $x\in\Omega$. 
\end{MT2}
We proceed with examples illustrating Lemma~\ref{sc}  in action.
\begin{example}
\label{Lp}
Let $\mu$ be an absolutely continuous measure, that is given by the integral
$$
\mu(E)=\int_E fd\mathit{Vol}_g,\qquad\text{where } E\subset M.
$$
Suppose that the density function $f$ is $L_p$-integrable for some $p>1$. Then we claim that relation~\eqref{scH} holds, and by Lemma~\ref{sc} the embedding~\eqref{ce} is compact. Indeed, by Holder's inequality we obtain
$$
\mu(B(x,r))\leqslant\abs{f}_p\cdot\mathit{Vol}_g(B(x,r))^{1/p^*},
$$
where $|f|_p$ denotes for the $L_p$-norm, and $p^*$ is the Holder conjugate to $p$. Now the claim follows from the fact that the volume term behaves like $O(r^{2/p^*})$ when $r$ tends to zero.
\end{example}
\begin{example}
Generalising the example above one can also consider the so-called $\alpha$-uniform measures;
they satisfy the relation
$$
\mu(B(x,r))\leqslant Cr^\alpha\qquad\text{for any }x\in M,
$$
and some positive constants $C$ and $\alpha$. These, for example, include measures that are absolutely continuous with respect to the $s$-dimensional Hausdorff measures $\mu^s$ with densities in $L_p(M,\mu^s)$, where $p>1$. 
Adding such measures to the  one in the example above, we obtain a variety of non-absolutely continuous measures for which the embedding~\eqref{ce} is compact.
\end{example}

\subsection{Proof of Lemma~\ref{sc}}
We start with the following statement.
\begin{claim}
\label{new:claim}
Let $\mu$ be a finite Radon measure supported in a bounded domain $G\subset\mathbf R^2$. Suppose that the values $\mu(B(x,r))\ln^q(1/r)$ are uniformly bounded in $x$ and $0\leqslant r\leqslant 1$. Then there exists a constant $C_1$ such that
\begin{equation}
\label{1:cap}
\mu(F)\leqslant C_1\cdot\CAP (F,\Omega)
\end{equation}
for any $F\subset\Omega\subset G$, where $F$ is a closed set.
\end{claim}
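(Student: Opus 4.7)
The plan is to deduce the isocapacitory inequality from the uniform boundedness of the logarithmic Green potential of $\mu$. Since $\bar G$ is compact, I will fix a ball $B\subset\mathbf{R}^2$ containing $\bar G$. By monotonicity of capacity in the domain, one has $\CAP(F,B)\leqslant\CAP(F,\Omega)$ for every $\Omega\subset B$, so it will suffice to prove the bound $\mu(F)\leqslant C_1\CAP(F,B)$ once and for all, with a constant independent of $\Omega$.

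The first main step is to bound the logarithmic potential $U^\mu(y):=\int G^B(x,y)\, d\mu(x)$, where $G^B$ denotes the Dirichlet Green function of $B$. In two dimensions, $G^B(x,y)\leqslant c_0+c_1\ln^+(1/|x-y|)$ with constants depending only on $\mathrm{diam}(B)$. Applying Fubini to the layer-cake decomposition of $\ln^+$ yields, for any $r_0\in(0,1)$,
$$
\int\ln^+(1/|x-y|)\, d\mu(x)=\mu(B(y,r_0))\ln(1/r_0)+\int_0^{r_0}\frac{\mu(B(y,s))}{s}\, ds+O(1).
$$
Substituting the hypothesis $\mu(B(y,s))\leqslant M\ln^{-q}(1/s)$ and changing variables via $t=\ln(1/s)$ reduces the main integral to $\int_{\ln(1/r_0)}^\infty t^{-q}\, dt$, which is finite whenever $q>1$. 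Hence $\|U^\mu\|_\infty\leqslant K$ for some constant $K$ depending only on $M$, $q$, $\mu(B)$, and $\mathrm{diam}(B)$.

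The isocapacitory inequality will then follow from the Green representation of the capacitary potential. For closed $F\subset B$, let $u_F\in W_0^{1,2}(B)$ denote the minimizer of $\int|\nabla v|^2$ subject to $v\geqslant 1$ quasi-everywhere on $F$. Classical potential theory gives $u_F(x)=\int G^B(x,y)\, d\nu_F(y)$, where the equilibrium measure $\nu_F\geqslant 0$ is supported on $F$ with total mass $\nu_F(F)=\CAP(F,B)$. Combined with the uniform bound on $U^\mu$, Fubini then gives
$$
\mu(F)\leqslant\int u_F\, d\mu=\int U^\mu\, d\nu_F\leqslant K\cdot\CAP(F,B)\leqslant K\cdot\CAP(F,\Omega),
$$
which proves the claim with $C_1=K$.

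The principal technical obstacle is justifying the first inequality $\mu(F)\leqslant\int u_F\, d\mu$, since $u_F$ equals $1$ only quasi-everywhere on $F$. The required fact --- that $\mu$ assigns no mass to sets of zero capacity --- follows from Frostman's criterion for two-dimensional logarithmic capacity: a polar set supports no nontrivial positive measure with bounded logarithmic potential, and the restriction of $\mu$ to any such set would provide such a measure. This argument tacitly requires $q>1$, which is the regime in which the claim is applied in the proof of Lemma~\ref{sc}.
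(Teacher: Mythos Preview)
Your argument is correct but proceeds along a genuinely different line from the paper's proof. The paper invokes Maz'ja's theorem (from~\cite[Sect.~8.5]{Ma}) as a black box: the ball condition $\mu(B(x,r))\leqslant C_2\,\capty(B(x,r))^q$, where $\capty$ is the Sobolev capacity, is upgraded to the same inequality for arbitrary compacta, and the resulting bound $\mu(F)\leqslant C_4\,\CAP(F,\Omega)^q$ is then reduced to the linear inequality by comparing $\capty$ with the condenser capacity via a result of Reshetnyak and by using the trivial bound $\mu(F)\leqslant\mu(G)$ when $\CAP(F,\Omega)\geqslant 1$. Your route avoids both the Maz'ja extension step and the capacity comparison: you show directly that the Green potential $U^\mu$ is bounded on $B$, and then the isocapacitory inequality drops out of the equilibrium-measure representation $u_F=G^B\ast\nu_F$ together with Fubini. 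This is more self-contained and is, in fact, exactly the mechanism the paper alludes to in Sect.~\ref{nonvan} when it remarks that boundedness of the logarithmic potential forces boundedness of the isocapacity constant. The price is that your argument needs $q>1$ for the layer-cake integral $\int_1^\infty t^{-q}\,dt$ to converge; as you correctly observe, this is the only regime in which the claim is used.
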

\begin{proof}
First, we introduce another capacity quantity on compact sets $F$ in the Euclidean plane:
$$
\capty(F)=\inf\left\{\int\varphi^2dV+\int\abs{\nabla\varphi}^2dV:\varphi\in C_0^\infty(\mathbf R^2)\text{ and }\varphi\geqslant 1\text{ on }F\right\}.
$$
As is known~\cite{Ma,Re}, its values on balls behave asymptotically like $O(\ln(1/r))$, and by the claim hypotheses we obtain that
$$
\mu(B(x,r))\leqslant C_2\cdot\capty(B(x,r))^q
$$ 
for some constant $C_2$, where $x\in\mathbf R^2$ and $0\leqslant r\leqslant 1$. By the result of Maz'ja in~\cite[Sect.~8.5]{Ma}, this inequality extends to any compact set $F$,
\begin{equation}
\label{ma:cap}
\mu(F)\leqslant C_3\cdot\capty(F)^q,
\end{equation}
possibly with another constant $C_3$ independent of $F$. Now we claim that the latter implies that
\begin{equation}
\label{q:cap}
\mu(F)\leqslant C_4\cdot\CAP (F,\Omega)^q
\end{equation}
for any $F\subset\Omega\subset G$. Indeed, as is known~\cite[Sect.~6]{Re}, there is a constant $C_5$, depending on the diameter of $G$ only, such that
$$
\capty(F)\leqslant C_5\cdot\CAP(F,G)\leqslant C_5\cdot \CAP(F,\Omega)
$$
for any $F\subset\Omega\subset G$, where the second inequality is a monotonicity property of $\CAP$. This together with~\eqref{ma:cap} demonstrates inequality~\eqref{q:cap}, which, in turn, yields inequality~\eqref{1:cap}; the constant $C_1$ can be chosen to be the maximum of $C_4$ and the total mass of $\mu$. 
\end{proof}
To prove Lemma~\ref{sc} we fix a reference metric $g\in c$ and choose a finite open covering $(V_i)$ of $M$ by charts on which $g$ is conformally Euclidean. Using the partition of unity, we can decompose $\mu$ into the sum of measures $\mu_i$, where each $\mu_i$ is supported in $V_i$. By $c_i$ we denote the conformal class on $V_i$ obtained by restricting the metrics from $c$. Combining Claim~\ref{new:claim} and Corollary~\ref{iff}, we see that the first eigenvalues $\lambda_1(\mu_i,c_i)$ are positive. It is straightforward to see that so are the first eigenvalues $\lambda_1(\mu_i,c)$,
$$
\lambda_1(\mu_i,c)\geqslant\lambda_1(\mu_i,c_i)>0.
$$
Now we apply the second Maz'ja theorem together with Lemma~\ref{re2ma} to conclude that the embedding
$$
L_2(M,\mu_i)\cap L_2^1(M,\mathit{Vol}_g)\subset L_2(M,\mu_i)
$$
is compact for any $i$, and hence so is the embedding~\eqref{ce}.
\qed

\section{Weak maximisers for the first eigenvalue}
\label{weakmax:ex}
\subsection{The main theorem}
Recall that, identifying conformal metrics with their volume forms, we extended the eigenvalues $\lambda_k(g)$ to a class of Radon probability measures on $M$. On the {\em class of continuous measures} the eigenvalues are still bounded, and the purpose of this section is to show the $\sup\lambda_1(\mu,c)$ is achieved in this class. More precisely, we have the following statement.
\begin{TB1}
Let $M$ be a compact surface, possibly with boundary, endowed with a conformal class $c$ of Riemannian metrics. Suppose that 
\begin{equation}
\label{hypo1}
\sup\{\lambda_1(\mu,c)\mu(M):\mu\text{ is a continuous Radon measure  on }M\}>8\pi.
\end{equation}
Then any $\lambda_1$-maximising sequence of Radon probability measures contains a subsequence that converges weakly to a continuous Radon measure $\mu$ at which the supremum on the left-hand side is achieved.
\end{TB1}
Before proving the theorem we make two remarks. First, the maximal measure clearly has a positive first eigenvalue and, thus, satisfies a certain isocapacitory inequality, see Sect.~\ref{nonvan}. In particular, the class of continuous Radon measures in the theorem above can be significantly narrowed, for example, to the Radon measures that do not charge sets of zero capacity.  Second, the following result of Colbois and El Soufi~\cite{CEl} shows that the hypothesis~\eqref{hypo1} is not very significant for closed surfaces $M$: for any conformal class $c$ on a closed surface $M$ the quantity 
$$
\sup\{\lambda_1(g)\mathit{Vol}_g(M):g\in c\}
$$
is greater or equal to $8\pi$. 

Due to the upper-semicontinuity property of the eigenvalues the proof of Theorem~$\mathrm{B_1}$ is essentially concerned with ruling out measures with non-trivial discrete part as limit maximal measures. 
\begin{proof}[Proof of Theorem~$\mathrm{B_1}$.]
Denote by $\Lambda_1$ the quantity
$$
\sup\{\lambda_1(\mu,c):\mu\text{ is a continuous Radon probability  measure on }M\},
$$
and let $\mu_n$ be a maximising sequence of continuous Radon measures, $\lambda_1(\mu_n,c)\to \Lambda_1$ as $n\to +\infty$. Since the space of Radon probability measures on a compact surface $M$ is weakly compact, we can assume that the $\mu_n$'s converge weakly to a Radon probability measure $\mu$. By upper semi-continuity (Lemma~\ref{up_semi}), for a proof of the theorem it is sufficient to show that $\mu$ is continuous. Since $\Lambda_1>8\pi$, then by Lemma~\ref{lambda1c} below the measure $\mu$ can not be a Dirac measure. Further, the combination of upper semi-continuity and Lemma~\ref{comp} implies that $\mu$ can not have a non-trivial discrete part and, thus, is a continuous Radon measure.
\end{proof}

\subsection{Concentration of measures}
Recall that by the example in Sect.~\ref{prems} the first eigenvalue of the Dirac measure is infinite. Nevertheless, the following lemma shows that it is possible to bound the $\lim\sup\lambda_1(\mu_n)$ for a sequence $\mu_n$ converging to the Dirac measure. A similar statement for Riemannian volume measures has been sketched in~\cite[p.~888-889]{Na96}, and the details have been worked out in~\cite{Gi}; we give a proof following the idea in~\cite{Ko}.
\begin{lemma}
\label{lambda1c}
Let $(M,c)$ be a compact Riemann surface, possibly with boundary, and $\mu_n$ be a sequence of continuous Radon probability measures converging weakly to the Dirac measure $\delta_x$, $x\in M$. Then $\lim\sup\lambda_1(\mu_n)$ is not greater than $8\pi$.
\end{lemma}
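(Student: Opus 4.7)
My plan is to carry out a localised Hersch-type argument at the concentration point $x$, exploiting that a single point has vanishing capacity in dimension two. I would first fix a conformal chart $\phi : U \to V \subset S^2$ from a neighbourhood of $x$ onto an open subset of the round sphere with $\phi(x) = p_0$, obtained by composing isothermal coordinates near $x$ with stereographic projection, and denote by $P = (P^1, P^2, P^3) : S^2 \hookrightarrow \mathbf{R}^3$ the standard embedding. Since $\capty(\{x\}) = 0$, there exist logarithmic cutoffs $\eta_k \in C^\infty_0(U)$ with $0 \leqslant \eta_k \leqslant 1$, $\eta_k \equiv 1$ on a neighbourhood of $x$, and $\int_M \abs{\nabla \eta_k}^2\, \mathit{dVol}_g \to 0$ as $k \to \infty$.

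For each $n$ (and fixed $k$), the weighted push-forward $\nu_{n,k} := \phi_*(\eta_k \mu_n)$ is a non-atomic finite Radon measure on $V$, and for $n$ large enough its normalisation is a non-Dirac probability measure on $S^2$ (by continuity of $\mu_n$). The classical Hersch renormalisation then supplies a Möbius transformation $\psi_{n,k} : S^2 \to S^2$ with $\int_{S^2}(P \circ \psi_{n,k})\, d\nu_{n,k} = 0$ in $\mathbf{R}^3$; equivalently, writing $v^i := P^i \circ \psi_{n,k} \circ \phi$ on $U$, the three functions
$$
u^i_{n,k} := \eta_k \cdot v^i, \qquad i = 1, 2, 3,
$$
extended by zero outside $U$ satisfy $\int_M u^i_{n,k}\, d\mu_n = 0$ and are therefore admissible test functions for $\lambda_1(\mu_n,c)$. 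The key algebraic computation uses $\sum_i (P^i)^2 \equiv 1$ on $S^2$: this gives $\sum_i (u^i_{n,k})^2 = \eta_k^2$, while the cross term in $\sum_i \abs{\nabla u^i_{n,k}}^2$ drops out because $\sum_i v^i\,\nabla v^i = \tfrac{1}{2}\nabla\sum_i (v^i)^2 = 0$, leaving
$$
\sum_i \abs{\nabla u^i_{n,k}}^2 = \abs{\nabla \eta_k}^2 + \eta_k^2 \sum_i \abs{\nabla v^i}^2.
$$
Conformal invariance of the Dirichlet integral, applied first through $\phi$ and then through $\psi_{n,k}$, bounds the integral of the last summand by $\int_{S^2}\sum_i \abs{\nabla P^i}^2\, \mathit{dVol}_{std} = 8\pi$.

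Summing the variational inequalities $\lambda_1(\mu_n,c)\int (u^i_{n,k})^2 d\mu_n \leqslant \int \abs{\nabla u^i_{n,k}}^2\,\mathit{dVol}_g$ over $i$ then gives
$$
\lambda_1(\mu_n,c) \int_M \eta_k^2\, d\mu_n \leqslant 8\pi + \int_M \abs{\nabla \eta_k}^2\, \mathit{dVol}_g.
$$
Taking $n \to \infty$ at fixed $k$, weak convergence $\mu_n \to \delta_x$ together with continuity of $\eta_k^2$ forces $\int \eta_k^2 d\mu_n \to \eta_k(x)^2 = 1$, whence $\limsup_n \lambda_1(\mu_n,c) \leqslant 8\pi + \int \abs{\nabla \eta_k}^2 \mathit{dVol}_g$; sending $k \to \infty$ then yields the claim. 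The main technical point will be verifying that the Hersch renormalisation is still applicable as $\nu_{n,k}$ approaches the Dirac mass at $\phi(x)$: the Möbius transformations $\psi_{n,k}$ inevitably degenerate in that limit, yet conformal invariance keeps the Dirichlet energy pinned at $8\pi$, which is precisely what makes the constant sharp. The case $x \in \partial M$ is handled by the analogous construction with a half-disk chart onto a hemisphere of $S^2$ and the Neumann convention.
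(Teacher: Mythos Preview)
Your proposal is correct and follows essentially the same Hersch-type argument as the paper: a conformal chart $\phi$ into $S^2$, a cutoff $\eta$ with small Dirichlet energy exploiting that a point has zero capacity, Hersch renormalisation applied to the push-forward of $\eta\,\mu_n$, and the test functions $\eta\cdot(x^i\circ\psi_{n,k}\circ\phi)$.

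The one noteworthy difference is your handling of the cross term: you observe that $\sum_i v^i\nabla v^i=\tfrac12\nabla\!\left(\sum_i (v^i)^2\right)=0$, so the mixed term in $\sum_i|\nabla(\eta v^i)|^2$ vanishes identically, giving directly $\lambda_1(\mu_n)\int\eta^2\,d\mu_n\leqslant 8\pi+\int|\nabla\eta|^2\,d\mathit{Vol}_g$. The paper instead bounds the cross term by Cauchy--Schwarz, arriving at $8\pi+10\pi^{1/2}\varepsilon^{1/2}+\varepsilon$ before letting $\varepsilon\to 0$. Your computation is cleaner and yields the sharp constant with fewer error terms, but the underlying strategy is the same.
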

\begin{proof}
First, if $M$ has a boundary, then it can be viewed as a subdomain of another Riemannian surface. Thus, without loss of generality we may assume that $x$ is an interior point. Let $\Omega$ be an open coordinate ball around $x\in M$ on which the metric $g$ is conformally Euclidean, and let
$$
\phi:\Omega\rightarrow S^2\subset\mathbf R^3
$$
be a conformal map into the unit sphere in $\mathbf R^3$. Since a point on Euclidean plane has zero capacity, then for any $\varepsilon>0$ there exists a function $\psi\in C^\infty_0(\Omega)$ such that $0\leqslant\psi\leqslant 1$,
$$
\psi=1\text{ in a neighbourhood of }x,\quad\text{and}\quad\int_M\abs{\nabla\psi}^2\mathit{dVol}g<\varepsilon.
$$
By the Hersch lemma, Appendix~\ref{ap:gy}, there exists a conformal transformation $s_n:S^2\to S^2$ such that
$$
\int_M\psi(x^i\circ s_n\circ\phi)d\mu_n=0\quad\text{for any}\quad i=1,2,3,
$$
where $(x^i)$ are coordinate functions in $\mathbf R^3$. Using the functions $\varphi_n^i=\psi(x^i\circ s_n\circ\phi)$ as test-functions for the Rayleigh quotient, we obtain
$$
\lambda_1(\mu_n,c)\int_M(\varphi_n^i)^2d\mu_n\leqslant\int_M\abs{\nabla\varphi_n^i}^2\mathit{dVol}_g
$$
for any $i=1,2,3$. Summing over all $\imath$'s yields
\begin{equation}
\label{RT}
\lambda_1(\mu_n,c)\int_M\psi^2d\mu_n\leqslant\sum_i\int_M\abs{\nabla\varphi_n^i}^2\mathit{dVol}_g.
\end{equation}
The right-hand side can be estimated as
\begin{multline*}
\sum_i\int_M\abs{\nabla\varphi^i_n}^2\mathit{dVol}_g\leqslant\sum_i\int_M\psi^2\abs{\nabla(x^i\circ s_n\circ\phi)}^2\mathit{dVol}_g\\
+2\sum_i\int_M\psi\abs{\nabla(x^i\circ s_n\circ\phi)}\abs{\nabla\psi}\mathit{dVol}_g+\int_M\abs{\nabla\psi}^2\mathit{dVol}_g.
\end{multline*}
The first sum on the right-hand side can be further estimated by the quantity
$$
\sum_i\int_\Omega\abs{\nabla(x^i\circ s_n\circ\phi)}^2\mathit{dVol}_g\leqslant\sum_i\int_{S^2}\abs{\nabla(x^i\circ s_n)}^2\mathit{dVol}_{S^2}=8\pi;
$$
here we used the conformal invariance of the Dirichlet energy, which in particular implies that the energy of a conformal diffeomorphism of $S^2$ equals $8\pi$. Similarly the second sum is not greater that
\begin{multline*}
2\sum_i\int_\Omega\abs{\nabla(x^i\circ s_n\circ\phi)}\abs{\nabla\psi}\mathit{dVol}_g\leqslant 2\varepsilon^{1/2}\sum_i\left(\int_\Omega\abs{\nabla(x^i\circ s_n\circ\phi)}^2\mathit{dVol}_g\right)^{1/2}\\
\leqslant 10\pi^{1/2}\varepsilon^{1/2}.
\end{multline*}
Using these two estimates and the fact that the Dirichlet energy of $\psi$ is less than $\varepsilon$, we obtain
$$
\sum_i\int_M\abs{\nabla\varphi^i_n}^2\mathit{dVol}_g\leqslant 8\pi+10\pi^{1/2}\varepsilon^{1/2}+\varepsilon.
$$
Combining the last inequality with~\eqref{RT}, and passing to the limit as $n\to +\infty$, we arrive at the following relation
$$
\lim\sup\lambda_1(\mu_n,c)\leqslant 8\pi+10\pi^{1/2}\varepsilon^{1/2}+\varepsilon.
$$
Since $\varepsilon>0$ is arbitrary, we conclude that the left-hand side is not greater than $8\pi$.
\end{proof}
\begin{remark*}
There is a version of Lemma~\ref{lambda1c} also for higher eigenvalues. More precisely, the arguments outlined in Appendix~\ref{ap:gy} yield the following statement: for any sequence of Radon measures $(\mu_n)$ converging weakly to a pure discrete measure  the inequality
$$
\lim\sup\lambda_k(\mu_n,c)\leqslant C_*k
$$
holds, where $C_*$ is the universal Korevaar(-Grigor'yan-Yau) constant. 
\end{remark*}

\section{Elements of regularity theory}
\label{em1}
\subsection{The main theorem}
Let $(M,c)$ be a compact Riemann surface. For a given Radon probability measure $\mu$ on $M$ by its {\it conformal deformation}  we call the family of probability measures
\begin{equation}
\label{deform}
\mu_t(X)=\left(\int_Xe^{\phi t}d\mu\right)/\left(\int_Me^{\phi t}d\mu\right),
\end{equation}
where $X\subset M$ is a Borel subset, and $\phi\in L^\infty(M)$ is a {\em generating function}. Clearly, any two generating functions that differ by a constant define the same family $\mu_t$. Thus, it is sufficient to consider generating functions $\phi$ that have zero mean-value with respect to $\mu$. This assumption is made throughout the rest of the paper.
\begin{defin}
\label{def:ex}
A Radon probability measure $\mu$ on a compact Riemann surface $(M,c)$ is called {\em extremal} for the $k$th eigenvalue $\lambda_k(\mu,c)$ if for any $\phi\in L^\infty(M)$ the function $\lambda_k(\mu_t,c)$, where
$\mu_t$ is defined by~\eqref{deform}, satisfies either the inequality
$$
\lambda_k(\mu_t,c)\leqslant\lambda_k(\mu,c)+o(t)\quad\text{ as } t\to 0,
$$
or the inequality
$$
\lambda_k(\mu_t,c)\geqslant\lambda_k(\mu,c)+o(t)\quad\text{ as } t\to 0.
$$
\end{defin}
In particular, we see that any $\lambda_k$-{\em maximiser} is extremal under conformal deformations. The definition above is a natural generalisation of the one given by Nadirashvili~\cite{Na96}, and also studied in~\cite{El00,El03}, for smooth Riemannian metrics. 

The purpose of this section is to study regularity properties of extremal measures. Recall that any Radon measure $\mu$ decomposes into the sum
$$
\mu=\int f\mathit{dVol}_g+\mu\lfloor\Sigma
$$
of its absolutely continuous and singular parts; the set $\Sigma$ has zero Lebesgue measure and is called the {\em singular set} of $\mu$. This decomposition motivates the terminology used in the sequel: we say that a measure $\mu$ "defines a metric conformal to $g$ away from the singular set $\Sigma$", viewing the density function $f$ as the "conformal factor of such a metric". The regularity properties of a measure $\mu$ are essentially concerned with the following questions.
\begin{itemize}
\item[(i)] How smooth is the density function $f$ of a given extremal measure? When is it $C^\infty$-smooth?
\item[(ii)] What are the properties of the singular set $\Sigma$ of an extremal measure, and when is it empty?
\end{itemize} 
Below we give complete answers to these questions under the hypothesis that the embedding
\begin{equation}
\label{emb}
\mathcal{L}=L_2(M,\mu)\cap L_2^1(M,\mathit{Vol}_g)\subset L_2(M,\mu).
\end{equation}
is compact. We refer to Sect.~\ref{nonvan} for the examples and description of measures that satisfy this hypothesis.

Another question, closely related to regularity, is concerned with the properties of the support $S$ of a given $\lambda_k$-extremal measure. For example, if  a $\lambda_k$-maximal measure is the limit of Riemannian volume measures, then the regions where it vanishes are precisely the regions where the corresponding Riemannian metrics collapse. In general, the support of a $\lambda_k$-extremal measure does not have to coincide with $M$. More precisely, the examples below show that there are completely singular extremal measures, that is, supported in zero Lebesgue measure sets.
\begin{example}[Singular $\lambda_1$-extremal measure on a disk]
\label{ex:disk}
Let $M$ be a $2$-dimensional disk, and $\mu_g$ be a boundary length measure of the Euclidean metric $g$. Rescaling the metric, we can suppose that $\mu_g$ is a probability measure. Its first eigenvalue $\lambda_1(\mu_g,[g])$ coincides with the first Steklov eigenvalue of $g$ and, as is known~\cite{Wei,FS}, is equal to $2\pi$. Moreover, the argument in~\cite[Th.~2.3]{FS} shows that $\mu_g$ maximises $\lambda_1(\mu,[g])$ among all continuous probability measures supported in the boundary $\partial M$. Since the conformal deformations given by~\eqref{deform} do not change the support of a measure, we conclude that $\mu_g$ is $\lambda_1$-extremal in the sense of Definition~\ref{def:ex}. 
\end{example}
\begin{example}[Singular $\lambda_1$-extremal measure on a sphere]
Let $M$ be a $2$-dimensional sphere, $E$ be its equator, and $M^+$ be a hemisphere whose boundary is $E$. For any continuous probability measure $\mu$ supported in $E$ it is straightforward to show that
$$
\lambda_1(\mu,[g_R])=2\lambda_1(\mu,[g^+_R]),
$$
where $g_R$ and $g^+_R$ denote the round metrics on $M$ and $M^+$ respectively. Let $\mu_R$ be a length measure on the equator $E$ corresponding to the round metric on $M$; we may assume that it is rescaled to be a probability measure. Using the result in Example~\ref{ex:disk}, it then follows that $\mu_R$ maximises $\lambda_1$ on $M$ among all continuous probability measures supported in the equator $E$. Since the conformal deformations given by~\eqref{deform} do not change the support of a measure, as in Example~\ref{ex:disk}, we conclude that $\mu_R$ is $\lambda_1$-extremal in the sense of Defintion~\ref{def:ex}.
\end{example}

Now we state our principal result; it deals with regularity properties of a $\lambda_k$-extremal measure in the interior of its support.
\begin{TCk}
Let $M$ be a compact surface, possibly with boundary, endowed with a conformal class $c$ of Riemannian metrics. Let $\mu$ be a $\lambda_k$-extremal measure  which is not completely singular and such that the embedding~\eqref{emb} is compact. 
\begin{itemize}
\item [(i)] Then the measure $\mu$ is absolutely continuous (with respect to $\mathit{Vol}_g$, $g\in c$) in the interior of its support $S\subset M$, its density function is $C^\infty$-smooth in $S$ and vanishes at isolated points only. In other words, the measure $\mu$ defines a $C^\infty$-smooth metric on $S$, conformal to $g\in c$ away from isolated degeneracies which are conical singularities.
\item [(ii)] If the support of the measure $\mu$ does not coincide with $M$, then the measure has a non-trivial singular set $\Sigma\subset M\backslash\INT S$.
\end{itemize}
\end{TCk}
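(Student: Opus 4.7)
\emph{Step 1 (first variation and sum-of-squares identity).} The plan is to exploit extremality under conformal deformations. By the compactness of~\eqref{emb} and Proposition~\ref{ex:eigen}, the eigenspace $E$ of $\lambda_k(\mu,c)$ is finite-dimensional. For a generating function $\phi \in L^\infty(M)$ with $\int \phi\, d\mu = 0$ and a normalised eigenfunction $u \in E$, a direct differentiation of~\eqref{deform} yields
$$
\frac{d}{dt}\bigg|_{t=0}\matheur R_c(u,\mu_t) = -\lambda_k(\mu,c)\int_M u^2 \phi\, d\mu.
$$
The first-variation formulas for $\lambda_k$ with multiplicity (Lemma~\ref{d:eiv}) combined with the two-sided extremality condition of Definition~\ref{def:ex} then force $0$ to belong to the convex hull of the set $\{-\lambda_k\int u^2\phi\,d\mu : u\in E,\ \|u\|_{L^2(\mu)}=1\}$ for every admissible $\phi$. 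A Hahn--Banach / Carath\'eodory argument in the cone of ``moment measures'' produces eigenfunctions $u_1,\dots,u_n\in E$ satisfying $\sum_{i=1}^n u_i^2 = 1$ \ $\mu$-a.e.\ on $M$.

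\emph{Step 2 (harmonic map equation on $\mathrm{int}\,\supp\mu$).} Set $U=(u_1,\dots,u_n)$. Testing~\eqref{eq:eigen} against $u_i\varphi$ for $\varphi\in C_0^\infty(\mathrm{int}\,\supp\mu)$, summing in $i$, and using the identity $\sum u_i^2 =1$ together with its differentiated consequence $\sum u_i\nabla u_i = 0$, one obtains
$$
\int_M \sum_i|\nabla u_i|^2\,\varphi\,\mathit{dVol}_g = \lambda_k(\mu,c)\int_M \varphi\,d\mu.
$$
Hence $\mu$ is absolutely continuous on $\mathrm{int}\,\supp\mu$ with density $f = \lambda_k(\mu,c)^{-1}|\nabla U|^2$, and the eigenfunction system reduces to $-\Delta_g U = |\nabla U|^2\,U$ together with $|U|=1$: precisely the weakly harmonic map equation for $U:(\mathrm{int}\,\supp\mu,\,g)\to S^{n-1}\subset\mathbb{R}^n$. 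The upgrade of the $\mu$-a.e.\ identity $|U|^2=1$ to the distributional statement needed in the integration by parts is carried out by an initial elliptic bootstrap exploiting the capacity decay of $\mu$ furnished by Corollary~\ref{cor:cap}.

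\emph{Step 3 (regularity and conical singularities).} By H\'elein's theorem, a $W^{1,2}$-weakly harmonic map from a two-dimensional Riemannian domain into the round sphere $S^{n-1}$ is $C^\infty$-smooth (indeed real-analytic). Therefore $U \in C^\infty(\mathrm{int}\,\supp\mu,\,S^{n-1})$ and $f \in C^\infty(\mathrm{int}\,\supp\mu)$. The zero set of $f$ in $\mathrm{int}\,\supp\mu$ coincides with the critical set of $U$. Since $U$ cannot be constant on any open subset of $\mathrm{int}\,\supp\mu$ (otherwise $\mu$ would vanish there, contradicting inclusion in $\supp\mu$), the standard local structure of harmonic maps from Riemann surfaces into round spheres---via the holomorphy of the Hopf differential $\langle\partial U,\partial U\rangle\,dz^2$, together with the branch-point theory that handles the weakly conformal case---shows that this critical set is discrete, and the local expansion $U-U(z_0)=O(|z|^k)$ with $k\geq 1$ produces $f\,|dz|^2\sim|z|^{2(k-1)}|dz|^2$, the model of a conical singularity.

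\emph{Main obstacle.} The technical heart is Step~1: one-sided extremality of the multi-valued function $\lambda_k(\mu_t)$ does not reduce to a plain derivative equation, and the convex-duality passage to the pointwise identity $\sum u_i^2 = 1$ must be arranged carefully. A subsidiary delicate point is the bootstrap in Step~2 upgrading $\mu$-a.e.\ equalities to distributional ones, and the use of H\'elein's theorem in Step~3, which relies essentially on the sphere-valued target---for general targets, borderline regularity of harmonic maps from surfaces can fail.
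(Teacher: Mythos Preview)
Your proposal is correct and follows essentially the same route as the paper: extremality combined with the first-variation formulas (Lemma~\ref{d:eiv}) and a convex separation argument yield $\sum u_i^2=1$ on $\supp\mu$ (this is Lemma~\ref{nadir}); testing the eigenfunction identity against $u_i\psi$ and summing gives absolute continuity with density $\lambda_k^{-1}\sum|\nabla u_i|^2$; the resulting system is the weakly harmonic map equation into $S^{n-1}$, to which H\'elein's theorem applies, and the isolated zeros of the density are the branch points of $U$. The only place you are more careful than the paper is in flagging the upgrade of the $\mu$-a.e.\ identity $\sum u_i^2=1$ to the Sobolev/distributional identity needed to make $\int\langle\nabla(\sum u_i^2),\nabla\psi\rangle\,dVol_g$ vanish; the paper passes over this point without comment.
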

The following example suggests that the compact embedding hypothesis may hold when an extremal metric has sufficiently many symmetries.
\begin{example}[Symmetries and regularity]
Let $M$ be a surface, possibly with boundary, and $c$ be a conformal class of Riemannian metrics on it. Further, let $\mu$ be a $\lambda_k$-extremal metric on $M$, understood as a non-completely singular Radon measure, and suppose that $\mu$ is invariant under a free smooth circle action on $M$. Then $\mu$ is a Riemannian metric which is $C^\infty$-smooth in the interior of its support. Indeed, by the classical disintegration theory~\cite{DM} any circle-invariant measure locally splits as a product of two measures; one of them is a uniform measure on a reference orbit, see details in~\cite{Ko2}. This shows that there is a constant $C$ such that for any sufficiently small ball $B(x,r)\subset M$ the following inequality holds
$$
\mu(B(x,r))\leqslant Cr\qquad\text{for any }x\in M.
$$
Now Lemma~\ref{sc} implies that the embedding~\eqref{emb} is compact, and by Theorem~$C_k$ the measure $\mu$ is the volume measure of a $C^\infty$-smooth metric in the interior of its support.
\end{example}
We end this introduction with remarks on conical singularities of extremal metrics. Recall that for a given metric a point $p\in M$ is called its conical singularity of order $\alpha$ (or of angle $2\pi(\alpha+1)$) if in an appropriate local complex coordinate the metric has the form $\abs{z}^{2\alpha}\rho(z)\abs{dz}^2$, where $\rho(z)>0$. In other words, near $p$ the metric is conformal to the Euclidean cone of total angle $2\pi(\alpha+1)$. First, the conical singularities of an extremal metric in Theorem~$C_k$ have angles that are integer multiples of $2\pi$. This follows from the proof, where we show that they correspond to branch points of certain harmonic maps. The above applies to singularities in the interior of the supports only. Mention that on the boundary an extremal metric can have more complicated degeneracies. For example, the metric on a $2$-dimensional disk $D$, regarded as a punctured round sphere, maximises the first eigenvalue and vanishes on the boundary.
\begin{example}[Smoothness of conical singularities]
Let $g$ be a metric with conical singularities and unit volume on $M$. Suppose that it is $\lambda_k$-extremal under conformal deformations, that is in the sense of Definition~\ref{def:ex}. We claim that such a metric has to be $C^\infty$-smooth, and the angles at its conical singularities are integer multiplies of $2\pi$. Indeed, by Example~\ref{Lp} the embedding~\eqref{emb} is compact, and the statement follows from Theorem~$C_k$ together with the discussion above. Mention that the hypothetical $\lambda_1$-maximal metric on a genus $2$-surface, obtained in~\cite{JLNNP}, satisfies this conclusion.
\end{example}
\begin{example}[Extremal absolutely continuous measures]
Let $\mu$ be an absolutely continuous probability measure on $M$, whose density function is $L^p$-integrable, where $p>1$; see Example~\ref{Lp}. One can view $\mu$ as the volume measure of a metric conformal to a genuine Riemannian metric on $M$ whose conformal factor is $L^p$-integrable. Such singular metrics naturally occur on Alexandrov surfaces of bounded integral curvature, see~\cite{Ko3}. Suppose that $\mu$ is $\lambda_k$-extremal under conformal deformations. Then by Theorem~$C_k$ the support of $\mu$ coincides with the whole surface $M$, and the density function is $C^\infty$-smooth everywhere on $M$.
\end{example}

\subsection{Continuity properties}
We start with establishing the continuity properties of eigenvalues and eigenspaces corresponding to the family of measures $\mu_t$. We consider these issues in a slightly more general setting that is necessary for applications, describing a suitable topology on the space of probability measures.
\begin{defin}
By the {\it integral distance} between two probability measures $\mu$
and $\mu'$, we call the quantity
$$
d(\mu,\mu')=\sup_{v\geqslant 0}\abs{\ln\left(\int vd\mu/\int vd\mu'\right)},
$$
where the supremums are taken over {\it non-trivial} continuous functions on $M$.
\end{defin}
In general, the distance $d(\mu,\mu')$ may take infinite values; however, it does determine a topology on the space of probability measures, which is stronger than the weak topology. For example, the family of measures $\mu_t$ given by~\eqref{deform}, is always continuous in it. Mention that for measures with finite distance the corresponding $L_2$-spaces, regarded as topological vector spaces, coincide. In particular, the embedding~\eqref{emb} is compact or not for such measures simultaneously. In the sequel we often use the introduced distance in the form of the following inequality:
$$
\abs{1-\left(\int vd\mu/\int vd\mu'\right)}\leqslant\delta(\mu,\mu'):=\exp{d(\mu,\mu')}-1,
$$
where $v$ is an arbitrary non-negative function. We demonstrate this in the following lemma.
\begin{lemma}
\label{c:eiv}
Let $(M,c)$ be a compact Riemann surface, possibly with boundary, and $\mu$ be a probability measure on $M$ whose eigenvalue $\lambda_k(\mu,c)$ is finite. Then for any sequence $(\mu_n)$ of
probability measures that converge in the integral distance to $\mu$, we have
$$
\lambda_k(\mu_n,c)\longrightarrow\lambda_k(\mu,c)\qquad\text{as~~}n\to +\infty.
$$
\end{lemma}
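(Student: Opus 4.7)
The plan is to exploit the inequality displayed immediately before the lemma, using as test functions $v=u^2$ for $u\in C^\infty(M)$. The Rayleigh quotient $\matheur R_c(u,\mu)$ has a numerator $\int\abs{\nabla u}^2\mathit{dVol}_g$ that does not depend on $\mu$, while its denominator is precisely an integral of a non-negative continuous function against the measure. So the integral distance directly controls the ratio of the two Rayleigh quotients.

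More concretely, from the definition of $d(\mu,\mu_n)$, I would first record that for any non-trivial non-negative continuous function $v$,
\begin{equation*}
e^{-d(\mu,\mu_n)}\int v\,d\mu\;\leqslant\;\int v\,d\mu_n\;\leqslant\;e^{d(\mu,\mu_n)}\int v\,d\mu.
\end{equation*}
Setting $d_n=d(\mu,\mu_n)$ and applying this with $v=u^2$ for an arbitrary non-trivial $u\in C^\infty(M)$, I obtain the two-sided comparison
\begin{equation*}
e^{-d_n}\matheur R_c(u,\mu)\;\leqslant\;\matheur R_c(u,\mu_n)\;\leqslant\;e^{d_n}\matheur R_c(u,\mu),
\end{equation*}
valid simultaneously for all smooth test functions $u$. (This step is where finiteness of $\lambda_k(\mu,c)$ will eventually be used: otherwise the upper bound is vacuous.)

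Next, I would plug this uniform estimate into the min-max characterisation from Section~\ref{prems}. Taking the supremum over $u\in\Lambda^{k+1}$, one gets
\begin{equation*}
e^{-d_n}\sup_{u\in\Lambda^{k+1}}\matheur R_c(u,\mu)\;\leqslant\;\sup_{u\in\Lambda^{k+1}}\matheur R_c(u,\mu_n)\;\leqslant\;e^{d_n}\sup_{u\in\Lambda^{k+1}}\matheur R_c(u,\mu),
\end{equation*}
and then taking the infimum over all $(k+1)$-dimensional subspaces $\Lambda^{k+1}\subset C^\infty(M)$, I arrive at
\begin{equation*}
e^{-d_n}\lambda_k(\mu,c)\;\leqslant\;\lambda_k(\mu_n,c)\;\leqslant\;e^{d_n}\lambda_k(\mu,c).
\end{equation*}
Since by hypothesis $d_n\to 0$ and $\lambda_k(\mu,c)$ is finite, both bounds converge to $\lambda_k(\mu,c)$, proving the lemma.

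I do not see any real obstacle here: the argument is purely an order-of-quantifiers manipulation together with the built-in control the integral distance gives on ratios of integrals. The only point to be mildly careful about is that the min-max in the definition of $\lambda_k(\mu_n,c)$ and $\lambda_k(\mu,c)$ uses the \emph{same} class of test subspaces (namely $(k+1)$-dimensional subspaces of $C^\infty(M)$), so that the pointwise bound on Rayleigh quotients transfers to the sup-inf without any change of ambient space.
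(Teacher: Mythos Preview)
Your argument is correct and in fact cleaner than the paper's. The key observation is the same as the paper's, namely that the integral distance controls the ratio $\int u^2\,d\mu / \int u^2\,d\mu_n$ uniformly in $u$, but you exploit it more efficiently. By bounding the Rayleigh quotients multiplicatively,
\[
e^{-d_n}\matheur R_c(u,\mu)\leqslant \matheur R_c(u,\mu_n)\leqslant e^{d_n}\matheur R_c(u,\mu),
\]
you can pass through the $\sup$ and then the $\inf$ in a single symmetric step, obtaining both the upper and lower bounds on $\lambda_k(\mu_n,c)$ at once. The paper instead separates the two directions: it invokes the upper semi-continuity result (Prop.~\ref{up_semi}, which only needs weak convergence) for $\limsup\lambda_k(\mu_n,c)\leqslant\lambda_k(\mu,c)$, and for the reverse inequality it chooses near-optimal test spaces $\Lambda_n$ for $\mu_n$ and shows $\sup_{\Lambda_n}\matheur R_c(u,\mu_n)-\sup_{\Lambda_n}\matheur R_c(u,\mu)\to 0$ via the additive form $\delta(\mu,\mu_n)=e^{d(\mu,\mu_n)}-1$ of the bound. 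Your route avoids this asymmetry and the auxiliary choice of $\Lambda_n$ altogether.

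Your closing remark about the class of admissible test subspaces being the same is on point: since $d(\mu,\mu_n)<\infty$ forces $\int u^2\,d\mu=0\iff\int u^2\,d\mu_n=0$ for continuous $u$, a family of smooth functions is linearly independent in $L_2(M,\mu)$ if and only if it is so in $L_2(M,\mu_n)$, and the min-max really does run over identical collections of subspaces.
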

\begin{proof}
First, in view of the upper semi-continuity property (Prop.~\ref{up_semi}), it is sufficient to prove that
\begin{equation}
\lambda_k(\mu,c)\leqslant\lim\inf\lambda_k(\mu_n,c).
\end{equation}
Let $\Lambda_n$ be a $(k+1)$-dimensional space such that
$$
\sup_{u\in\Lambda_n}\matheur R_c(u,\mu_n)\leqslant\lambda_k(\mu_n,c)+1/n.
$$
We claim that the sequence
\begin{equation}
\label{sup_dif}
\sup_{\Lambda_n}\matheur R_c(u,\mu_n)-\sup_{\Lambda_n}\matheur R_c(u,\mu)
\end{equation}
converges to zero as $n\to +\infty$. Indeed, for any $u\in\Lambda_n$,
we have
\begin{multline*}
~~~~~\abs{\matheur R_c(u,\mu_n)-\matheur R_c(u,\mu)}\leqslant\delta(\mu,\mu_n)
\matheur R_c(u,\mu_n)\leqslant \delta(\mu,\mu_n)(\lambda_k(\mu_n,c)+1/n)\\
\leqslant C\cdot \delta(\mu,\mu_n).~~~~~
\end{multline*}
Here the first inequality follows by the definition of the quantity $\delta(\mu,\mu_n)$, and the constant $C$ is an upper bound for the sequence $(\lambda_k(\mu_n,c)+1/n)$. Since $\lambda_k(\mu,c)$ is finite, by upper semi-continuity such a bound exists. The last estimate shows that the absolute value of  quanity~\eqref{sup_dif} is also bounded by $C\cdot\delta(\mu,\mu_n)$, and hence converges to zero. Thus, we have
$$
\lambda_k(\mu,c)\leqslant\lim\inf (\sup_{\Lambda_n}\matheur R_c(u,\mu))=\lim\inf (\sup_{\Lambda_n}\matheur R_c(u,\mu_n))=\lim\inf\lambda_k(\mu_n,c),
$$
and the claim is demonstrated.
\end{proof}

We proceed with the continuity properties of eigenspaces. Below we suppose that for Radon measures  $\mu$ and $\mu_n$ the embedding~\eqref{emb} is compact. Denote by $E_k$ and $E_{n,k}$ the eigenspaces corresponding to $\lambda_k(\mu,c)$ and $\lambda_k(\mu_n,c)$ respectively, and by $\Pi_k$ and $\Pi_{n,k}$ the orthogonal projections on $E_k$ and $E_{n,k}$, regarded as subspaces in  $L_2(M,\mu)$. The following lemma can be obtained as a consequence of Kato's perturbation theory for Dirichlet forms~\cite{Kato}; the proof details can be found in Appendix~\ref{ap:proofs}.
\begin{lemma}
\label{c:eif}
Let $(M,c)$ be a compact Riemann surface, possibly with boundary, and let $(\mu_n)$ be a sequence of Radon probability measures converging in the integral distance to a Radon measure $\mu$.  Then the eigenspace projections $\Pi_{n,k}$ converge to the projection $\Pi_k$ in the norm topology as operators in $L_2(M,\mu)$.
\end{lemma}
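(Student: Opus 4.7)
The plan is to translate the problem into operator theory on a common Hilbert space and then apply standard spectral perturbation for compact self-adjoint operators. The key observation is that the Dirichlet form $\mathcal E(u,v)=\int_M\langle\nabla u,\nabla v\rangle\,d\mathit{Vol}_g$ is conformally invariant and so does not depend on the measure; only the reference inner product $\langle u,v\rangle_\nu=\int_M uv\,d\nu$ does. Taking $v=u^2$ in the integral-distance inequality stated just before the lemma yields
\begin{equation*}
(1-\delta_n)\|u\|_{L_2(M,\mu)}^2\leq\|u\|_{L_2(M,\mu_n)}^2\leq(1+\delta_n)\|u\|_{L_2(M,\mu)}^2,\qquad \delta_n=\delta(\mu,\mu_n)\to 0,
\end{equation*}
so $L_2(M,\mu)$ and $L_2(M,\mu_n)$ coincide as topological vector spaces with uniformly equivalent norms, and the compactness of the embedding~\eqref{emb} is preserved for all $n$. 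A polarisation argument applied to the integral-distance inequality gives, for any $g,h\in L_2(M,\mu)$,
\begin{equation*}
\abs{\langle g,h\rangle_\mu-\langle g,h\rangle_{\mu_n}}\leq\delta_n\|g\|_{L_2(M,\mu)}\|h\|_{L_2(M,\mu)}.
\end{equation*}

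For each $\nu\in\{\mu,\mu_n\}$ I introduce the resolvent operator $A_\nu$ on $L_2(M,\nu)$ defined by requiring $A_\nu f\in\mathcal L$ to satisfy
\begin{equation*}
\mathcal E(A_\nu f,v)+\langle A_\nu f,v\rangle_\nu=\langle f,v\rangle_\nu\qquad\text{for all }v\in\mathcal L.
\end{equation*}
Lax--Milgram makes $A_\nu$ well-defined, self-adjoint and bounded by $1$; the compactness of~\eqref{emb} makes $A_\nu$ compact, and an easy verification shows that its eigenvalues are $(\lambda_k(\nu,c)+1)^{-1}$ with the same eigenspaces as those of the original variational problem. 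Consequently $\Pi_k$ and $\Pi_{n,k}$ are exactly the spectral projections of $A_\mu$ and $A_{\mu_n}$ onto the eigenspaces associated with $(\lambda_k(\mu,c)+1)^{-1}$ and $(\lambda_k(\mu_n,c)+1)^{-1}$ respectively. To establish norm convergence $A_{\mu_n}\to A_\mu$ on $L_2(M,\mu)$, set $u=A_\mu f$ and $u_n=A_{\mu_n}f$, subtract the two defining identities and test against $v=u-u_n$; combining the resulting bilinear identity with the polarised integral-distance bound and the crude estimate $\|u_n\|_{L_2(M,\mu)}\leq(1+\delta_n)\|f\|_{L_2(M,\mu)}$ produces $\|u-u_n\|_{L_2(M,\mu)}\leq C\delta_n\|f\|_{L_2(M,\mu)}$, and hence $\|A_{\mu_n}-A_\mu\|\to 0$.

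The last step is the standard spectral continuity of compact self-adjoint operators. Pick a small circle $\Gamma\subset\mathbf C$ enclosing the eigenvalue $(\lambda_k(\mu,c)+1)^{-1}$ of $A_\mu$ and isolating it from the rest of the spectrum. By Lemma~\ref{c:eiv} the same contour isolates the corresponding eigenvalue of $A_{\mu_n}$ for all $n$ large, and norm convergence of $A_{\mu_n}$ yields norm convergence of the resolvents $(z-A_{\mu_n})^{-1}\to(z-A_\mu)^{-1}$ uniformly in $z\in\Gamma$. The Riesz integral representation
\begin{equation*}
\Pi_{n,k}=\frac{1}{2\pi i}\oint_\Gamma(z-A_{\mu_n})^{-1}dz\longrightarrow\frac{1}{2\pi i}\oint_\Gamma(z-A_\mu)^{-1}dz=\Pi_k
\end{equation*}
then gives operator-norm convergence on $L_2(M,\mu)$. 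The delicate point of the argument is the norm estimate $\|A_{\mu_n}-A_\mu\|\to 0$: one must carefully propagate the integral-distance bound through the bilinear identity while keeping track of the fact that the operators are nominally defined on different Hilbert spaces, exploiting the equivalence of norms established in the first paragraph.
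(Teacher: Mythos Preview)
Your approach is essentially the paper's: both reduce to spectral perturbation for operators associated with the fixed Dirichlet form. The paper works with the unbounded operators $T,T_n$ produced by Kato's representation theorem and then invokes Kato's perturbation theorem directly; you instead pass to the bounded compact resolvents $A_\nu=(T_\nu+1)^{-1}$, prove $\|A_{\mu_n}-A_\mu\|\leq C\delta_n$ by hand via the subtracted bilinear identity, and conclude with the Riesz contour formula. Your route is more self-contained and makes the quantitative estimate explicit --- this is exactly the content the paper extracts a posteriori in the remark following the lemma (relation~\eqref{o_bound}).

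There is, however, one identification slip. The Riesz integral $\frac{1}{2\pi i}\oint_\Gamma(z-A_{\mu_n})^{-1}\,dz$ is a projection onto $E_{n,k}$, but it is \emph{orthogonal} only with respect to the inner product in which $A_{\mu_n}$ is self-adjoint, namely $L_2(M,\mu_n)$. In the paper's notation this operator is $\Pi^*_{n,k}$, not $\Pi_{n,k}$ (the $L_2(M,\mu)$-orthogonal projection onto $E_{n,k}$). So what your argument actually yields is $\Pi^*_{n,k}\to\Pi_k$ in the $L_2(M,\mu)$-operator norm --- precisely the reformulation mentioned in the remark after the lemma. To recover the stated conclusion you need one more short step: since $\Pi^*_{n,k}$ and $\Pi_{n,k}$ project onto the same finite-dimensional subspace $E_{n,k}$ and your polarised bound gives $|\langle g,h\rangle_\mu-\langle g,h\rangle_{\mu_n}|\leq\delta_n\|g\|_\mu\|h\|_\mu$, one obtains $\|\Pi_{n,k}-\Pi^*_{n,k}\|\leq C\delta_n$ directly (alternatively, argue that $\Pi^*_{n,k}\to\Pi_k$ forces the ranges $E_{n,k}$ to converge to $E_k$ in the gap metric, whence the $L_2(M,\mu)$-orthogonal projections onto them converge as well).
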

\begin{remark*}
The arguments in Appendix~\ref{ap:proofs} show that the lemma above can be re-phrased in a number of other ways. For example, if $\Pi^*_{n,k}$ is an orthogonal projection on $E_{n,k}$ as a subspace in $L_2(M,\mu_n)$, then the norm $|\Pi_k-\Pi^*_{n,k}|$ of the operators in $L_2(M,\mu_n)$ also converges to zero as $n\to +\infty$.
\end{remark*}

\subsection{First variation formulas}
For a zero mean-value function $\phi\in L^\infty(M)$ by $L_\phi(u,\mu)$ we denote the quotient
$$
-\matheur R_c(u,\mu)\cdot\left(\int_Mu^2\phi d\mu\right)/\left(\int_Mu^2d\mu\right).
$$
The purpose of this sub-section is to prove the following first variation formulas for the eigenvalue functionals.
\begin{lemma}
\label{d:eiv}
Let $(M,c)$ be a compact Riemann surface, possibly with boundary, and $\mu$ be a Radon probability measure on $M$ such that the embedding~\eqref{emb} is compact. Then for any family of measures $\mu_t$, generated by a zero mean-value $\phi\in L^\infty(M)$, the function $\lambda_k(\mu_t,c)$ has left and right derivatives which satisfy the relations
$$
\left.\frac{d}{dt}\right|_{t=0-}\lambda_k(\mu_t,c)=
\sup_{u\in E_k}L_\phi(u,\mu),
$$
$$
\left.\frac{d}{dt}\right|_{t=0+}\lambda_k(\mu_t,c)=
\inf_{u\in E_k}L_\phi(u,\mu),
$$
where $E_k$ is the space spanned by eigenfunctions corresponding to the eigenvalue $\lambda_k(\mu,c)$, and the sup and inf are taken over non-trivial functions.
\end{lemma}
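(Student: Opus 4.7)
The plan is to combine the min-max characterisation of $\lambda_k$ with the continuity results of Lemmas~\ref{c:eiv}--\ref{c:eif} into matching upper and lower bounds on the one-sided difference quotients of $\lambda_k(\mu_t,c)$ at $t=0$.

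The basic computation is to differentiate the Rayleigh quotient at a fixed test function. Writing $Z(t)=\int_M e^{\phi t}d\mu$, the hypothesis $\int\phi\,d\mu=0$ gives $Z(0)=1$ and $Z'(0)=0$, and a direct differentiation yields
\begin{equation*}
\left.\frac{d}{dt}\right|_{t=0}\matheur R_c(u,\mu_t) \;=\; -\matheur R_c(u,\mu)\cdot\frac{\int u^2\phi\,d\mu}{\int u^2\,d\mu} \;=\; L_\phi(u,\mu),
\end{equation*}
with the Taylor-type expansion $\matheur R_c(u,\mu_t)=\matheur R_c(u,\mu)+tL_\phi(u,\mu)+O(t^2)$ holding uniformly on $\mathcal L$-bounded sets whose elements have $\int u^2 d\mu$ bounded below, thanks to $\phi\in L^\infty$.

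For the upper bound I would use a one-parameter family of canonical test subspaces. By compactness of \eqref{emb} and Prop.~\ref{ex:eigen}, fix $L^2(\mu)$-orthonormal eigenfunctions $\varphi_0,\ldots,\varphi_{k-1}$ for the first $k$ eigenvalues of $\mu$. For any unit vector $v\in E_k$, the $(k+1)$-dimensional subspace $\Lambda(v)=\mathrm{span}(\varphi_0,\ldots,\varphi_{k-1},v)$ satisfies $\lambda_k(\mu_t,c)\leqslant\sup_{u\in\Lambda(v)}\matheur R_c(u,\mu_t)$, with equality and (projective) argmax equal to $\{v\}$ at $t=0$. The right-hand side is a supremum of a smooth family on a finite-dimensional compact set, so by a Danskin-type argument its one-sided derivatives at $t=0$ both equal $L_\phi(v,\mu)$ via the basic computation. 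Optimising over $v\in E_k$ then gives
$$\limsup_{t\to 0^+}\frac{\lambda_k(\mu_t,c)-\lambda_k(\mu,c)}{t}\leqslant\inf_{v\in E_k}L_\phi(v,\mu),\quad \liminf_{t\to 0^-}\frac{\lambda_k(\mu_t,c)-\lambda_k(\mu,c)}{t}\geqslant\sup_{v\in E_k}L_\phi(v,\mu).$$

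For the matching inequalities I would use eigenfunctions of $\mu_t$. Let $u_t$ be a unit-norm (in $L^2(\mu_t)$) eigenfunction for $\lambda_k(\mu_t,c)$. Lemma~\ref{c:eif} yields a subsequential $L^2(\mu)$-limit $u_0\in E_k$ as $t\to 0$; writing $P$ for the $L^2(\mu)$-orthogonal projection onto $\bigoplus_{j<k}E(\lambda_j)$, the operator-norm convergence of eigenspace projections combined with the direct estimate $d(\mu_t,\mu)=O(t)$ delivers $\|Pu_t\|_{L^2(\mu)}=O(t)$ via the Kato-type bounds from Appendix~\ref{ap:proofs}. Then $\tilde u_t=u_t-Pu_t$ is $L^2(\mu)$-orthogonal to the lower eigenspaces and hence admissible in the variational characterisation of $\lambda_k(\mu,c)$. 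Expanding the Dirichlet energies using the eigenvalue identity $\int\langle\nabla u_t,\nabla\varphi_j\rangle\,\mathit{dVol}_g=\lambda_j(\mu)\langle u_t,\varphi_j\rangle_\mu$ together with the basic computation produces
$$\lambda_k(\mu,c)\leqslant\matheur R_c(\tilde u_t,\mu)=\matheur R_c(u_t,\mu)+O(\|Pu_t\|_{L^2(\mu)}^2)=\lambda_k(\mu_t,c)-tL_\phi(u_t,\mu)+O(t^2),$$
which on dividing by $t>0$ and using $L_\phi(u_t,\mu)\to L_\phi(u_0,\mu)\geqslant\inf_{v\in E_k}L_\phi(v,\mu)$ matches the upper bound and yields the right-derivative formula; the left derivative is entirely symmetric. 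The principal technical obstacle is ensuring that the subsequential limit $u_0$ actually lies in $E_k$ (rather than in a neighbouring eigenspace crossed during deformation) and that $\|Pu_t\|_{L^2(\mu)}$ vanishes at rate $O(t)$ rather than merely $o(1)$; both rely crucially on the (in fact Lipschitz) operator-norm convergence of eigenspace projections delivered by Lemma~\ref{c:eif}.
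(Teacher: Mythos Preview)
Your proposal is correct and follows essentially the same route as the paper. Both arguments rest on three shared ingredients: a uniform first-order expansion of the Rayleigh quotient (the paper's inequality~\eqref{kineq:1}, your ``basic computation''), a projection estimate showing that eigenfunctions of one measure are $O(t)$-close to being orthogonal to the lower eigenspaces of the other, and the quantitative Kato bound $|\Pi_k-\Pi_{t,k}|=O(\delta(\mu,\mu_t))=O(t)$ from relation~\eqref{o_bound}. Your lower-bound step---setting $\tilde u_t=u_t-Pu_t$ with $\|Pu_t\|_{L^2(\mu)}=O(t)$---is precisely the content of the second inequality in the paper's Claim~\ref{c1}, and your passage from $L_\phi(u_t,\mu)$ to $\inf_{E_k}L_\phi$ via a limit $u_0\in E_k$ is exactly Claim~\ref{c2}.

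The only tactical difference is in the upper bound. The paper proves it symmetrically (first inequality of Claim~\ref{c1}) by projecting a test function $u\in E_k$ onto the $L^2(\mu_t)$-orthogonal complement of $\Lambda_t$, the sum of eigenspaces for eigenvalues strictly below $\lambda_k(\mu_t)$; you instead run a Danskin argument on the fixed test space $\Lambda(v)=\mathrm{span}(\varphi_0,\ldots,\varphi_{k-1},v)$. One caution about your version: the assertion that the projective argmax of $\matheur R_c(\cdot,\mu)$ on $\Lambda(v)$ equals $\{v\}$ requires $\lambda_{k-1}(\mu)<\lambda_k(\mu)$. When $\lambda_{k-1}(\mu)=\lambda_k(\mu)$ some of the $\varphi_j$ already lie in $E_k$, the argmax is a larger subspace of $E_k$, and Danskin only gives $\limsup\leqslant\max_{u\in\Lambda(v)\cap E_k}L_\phi(u,\mu)$, which need not be $L_\phi(v,\mu)$. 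The paper's use of the variable space $\Lambda_t$ rather than a fixed $k$-dimensional span is designed to accommodate this degeneracy.
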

\begin{proof}
Below we prove the second identity. The first identity follows by similar arguments. Let $E_{k,t}$ and $E_k$ be the eigenspaces corresponding to $\lambda_k(\mu_t,c)$ and $\lambda_k(\mu,c)$.The following statements are proved  in Appendix~\ref{ap:proofs}.
\begin{claim}
\label{c1}
The eigenvalues $\lambda_k(\mu_t)$ and $\lambda_k(\mu)$ satisfy the
following inequalities:
$$
\lambda_k(\mu_t)\leqslant\inf_{u\in E_k}\matheur
R_c(u,\mu_t)+o(t)\qquad\text{as}\quad t\to 0,
$$
$$
\lambda_k(\mu)\leqslant\inf_{u\in E_{k,t}}\matheur
R_c(u,\mu)+o(t)\qquad\text{as}\quad t\to 0,
$$
where the infimums are taken over non-trivial functions.
\end{claim}
\begin{claim}
\label{c2}
The following limit identities hold:
$$
\inf_{E_{k,t}}L_\phi(u,\mu)\longrightarrow\inf_{E_k}L_\phi(u,\mu)\qquad
\text{as}\quad t\to 0,
$$
$$
\sup_{E_{k,t}}L_\phi(u,\mu)\longrightarrow\sup_{E_k}L_\phi(u,\mu)\qquad
\text{as}\quad t\to 0,
$$
where the infimums and supremums are assumed to be
taken over non-trivial functions $u$.
\end{claim}

\noindent
First, it is straightforward to see from the definition of $\mu_t$ that for any $u\in\mathcal L$ the following relation holds:
$$
\abs{\frac{1}{t}\left(\int_Mu^2d\mu_t-\int_Mu^2d\mu\right)-\int_Mu^2\phi d\mu}\leqslant\varepsilon(t)
\cdot\int_Mu^2d\mu,
$$
where $\varepsilon(t)$ is a quantity that does not depend on $u$ and converges to zero as $t\to 0$.
A further computation yields
\begin{equation}
\label{kineq:1}
\abs{\frac{1}{t}(\matheur R_c(u,\mu_t)-\matheur R_c(u,\mu))-L_\phi(u,\mu)}\leqslant \matheur R_c(u,\mu_t)\cdot (\delta(\mu,\mu_t)\abs{\phi}_\infty+\varepsilon(t))
\end{equation}
for any function $u\in\mathcal L$. Evaluating the quantities in this inequality on $u\in E_k$, we conclude that
$$
\frac{1}{t}(\inf_{u\in E_k}\matheur R_c(u,\mu_t)-\lambda_k(\mu))\longrightarrow\inf_{u\in E_k}L_\phi(u,\mu)
\qquad\text{as}\quad t\to 0+.
$$
Combining this with the first relation in Claim~\ref{c1}, we get 
\begin{equation}
\label{sup_ineq}
\lim\sup_{t\to 0+}\frac{1}{t}(\lambda_k(\mu_t)-\lambda_k(\mu))\leqslant \inf_{u\in E_k}L_\phi(u,\mu).
\end{equation}
Now evaluating the quantities in inequality~\eqref{kineq:1} on $u\in E_{k,t}$, we obtain that
$$
\inf_{u\in E_{k,t}}L_\phi(u,\mu)-\frac{1}{t}(\lambda_k(\mu_t)-\inf_{u\in E_{k,t}}\matheur R_c(u,\mu))\longrightarrow 0
\qquad\text{as}\quad t\to 0+.
$$ 
Combining this with the second relation in Claim~\ref{c1}, we conclude that
\begin{equation}
\label{inf_ineq}
\lim\inf_{t\to 0+}(\inf_{u\in E_{k,t}}L_\phi(u,\mu))\leqslant\lim\inf_{t\to 0+}\frac{1}{t}(\lambda_k(\mu_t)-\lambda_k(\mu)).
\end{equation}
Now by Claim~\ref{c2} the quantity on the left-hand side above coincides with $\inf_{E_k}L_\phi(u,\mu)$, and the second identity of the lemma follows by combination of inequalities~\eqref{sup_ineq}
and~\eqref{inf_ineq}.
\end{proof}

\subsection{Proof of Theorem~$C_k$}
The following lemma is a key ingredient in our approach to the regularity theory for extremal measures. It is a sharpened version of the statement originally discovered by Nadirashvili~\cite{Na96} for Riemannian metrics.
\begin{lemma}
\label{nadir}
Let $(M,c)$ be a compact Riemannian surface, possibly with boundary, and $\mu$ be a Radon probability measure on $M$ such that the embedding~\eqref{emb} is compact. Then the following hypotheses are equivalent:
\begin{itemize}
\item[(i)] the measure $\mu$ is $\lambda_k$-extremal;
\item[(ii)] the quadratic form
$$
u\longmapsto\int_Mu^2\phi d\mu
$$
is indefinite on the eigenspace $E_k$ for any zero mean-value function $\phi\in L^\infty(M)$;
\item[(iii)] there exists a finite collection of $\lambda_k$-eigenfunctions $(u_i)$ such that $\sum_i u_i^2=1$ on the support of $\mu$.
\end{itemize}
\end{lemma}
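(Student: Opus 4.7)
My plan is to establish (i)$\Leftrightarrow$(ii) and (ii)$\Leftrightarrow$(iii) separately. Under the compactness hypothesis on the embedding~\eqref{emb}, Prop.~\ref{ex:eigen} ensures that $\lambda_k(\mu,c)>0$ and that $E_k$ is finite-dimensional; the latter fact will be essential in the convex-geometry argument of Step~2.

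\emph{Step 1 -- (i)$\Leftrightarrow$(ii).} I would translate Definition~\ref{def:ex} using the one-sided derivatives of Lemma~\ref{d:eiv}. The local-maximum alternative in the definition reads $\inf_{u\in E_k}L_\phi(u,\mu)\leq 0\leq\sup_{u\in E_k}L_\phi(u,\mu)$; the local-minimum alternative forces both extrema to vanish (since $\sup\geq\inf$) and is therefore a special case. Hence extremality along $\phi$ is equivalent to this inequality pair. Substituting $L_\phi(u,\mu)=-\lambda_k(\mu,c)\int u^2\phi d\mu/\int u^2 d\mu$ and using $\lambda_k(\mu,c)>0$, the inequalities recast as
$$
\inf_{u\in E_k}\int u^2\phi d\mu\leq 0\leq\sup_{u\in E_k}\int u^2\phi d\mu,
$$
which is exactly the indefiniteness condition in (ii).

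\emph{Step 2 -- (ii)$\Leftrightarrow$(iii).} I would work in the finite-dimensional subspace $V\subset L^1(M,\mu)$ spanned by all products $e_ie_j$ of a fixed basis of $E_k$, and introduce
$$
K=\mathrm{conv}\left\{u^2\Big/\textstyle\int u^2 d\mu:u\in E_k\setminus\{0\}\right\}\subset V.
$$
The set $K$ is compact: $[u]\mapsto u^2/\int u^2 d\mu$ is continuous on the compact projective space $\mathbf P(E_k)$, so its image in $V$ is compact, and convex hulls of compact sets in finite dimensions are compact. I claim that
$$
\text{(ii)}\iff 1_{\supp\mu}\in K\iff\text{(iii)}.
$$
The second equivalence is immediate from the definition of $K$, after absorbing positive scalars into the $u_i$. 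For the first equivalence, by homogeneity and linearity of the pairing, (ii) reformulates as $\inf_{f\in K}\int f\phi d\mu\leq 0\leq\sup_{f\in K}\int f\phi d\mu$ for every mean-zero $\phi\in L^\infty(M)$. If $1_{\supp\mu}\in K$, then $\int\phi d\mu=0$ lies in the claimed interval. Conversely, if $1_{\supp\mu}\notin K$, strict Hahn--Banach separation in the dual pair $(L^1(M,\mu),L^\infty(M))$ produces some $\psi\in L^\infty$ with $\int\psi d\mu<\inf_{f\in K}\int f\psi d\mu$; setting $\phi=\psi-\int\psi d\mu$ and using $\int f d\mu=1$ for $f\in K$ yields $\inf_{f\in K}\int f\phi d\mu>0$, contradicting (ii).

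\emph{Main obstacle.} The delicate point is the closedness of $K$ needed for the strict separation: this rests crucially on the finite-dimensionality of $E_k$ afforded by the compactness of the embedding~\eqref{emb}. Without this hypothesis one would at best be able to approximate $1_{\supp\mu}$ by finite sums $\sum u_i^2$ rather than represent it exactly, and the representation in (iii) would be lost. A secondary point that needs a moment's care is ensuring that the separating functional can be taken in $L^\infty(M)$ (and not merely in the dual of $V$) -- but since we work inside $L^1(M,\mu)$ whose dual is $L^\infty(M,\mu)$, this is automatic, and any bounded measurable representative lifts to an element of $L^\infty(M)$.
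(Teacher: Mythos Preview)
Your proof is correct and follows essentially the same route as the paper's: the equivalence (i)$\Leftrightarrow$(ii) is deduced from the first-variation formulas of Lemma~\ref{d:eiv}, and (ii)$\Leftrightarrow$(iii) is obtained by a Hahn--Banach separation argument applied to the convex set generated by the squares $u^2$ with $u\in E_k$. The only cosmetic difference is that you normalise the generators to have unit $\mu$-integral, obtaining a compact convex set $K$ and separating $1$ from it directly, whereas the paper works with the unnormalised convex cone $\{u^2:u\in E_k\}$, separates $1$ from it, and then subtracts the mean of the separating functional; your explicit justification of the compactness of $K$ via the finite-dimensionality of $E_k$ makes the applicability of strict separation a bit more transparent.
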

\begin{proof}
The equivalence of the first two statements is a direct consequence of Lemma~\ref{d:eiv}. Indeed, since the left and right derivatives of $\lambda_k(\mu_t,c)$ exist, the $\lambda_k$-extremality is equivalent
to the relation
$$
\left.\frac{d}{dt}\right|_{t=0+}\lambda_k(\mu_t,c)\cdot\left.\frac{d}{dt}\right|_{t=0-}\lambda_k(\mu_t,c)\leqslant 0
$$
for any conformal deformation $\mu_t$. Using the formulas for the derivatives, we conclude that $\mu$ is $\lambda_k$-extremal if and only if the form $L_\phi(u,\mu)$ is indefinite on $E_k$ for any zero mean-value function $\phi\in L^\infty(M)$. The latter is equivalent to the hypothesis~$(ii)$.

\noindent
$(ii)\Rightarrow (iii)$. Let $K\subset L_1(M,\mu)$ be the convex hull of the set of squared $\lambda_k$-functions $\{u^2:$ $u\in E_k\}$. Suppose the contrary to the hypotheses~$(iii)$; then $1\ne K$. By classical separation results, there exists a function $\psi\in L^\infty(M)$ such that
$$
\int_M1\cdot\psi d\mu<0\quad\text{and}\quad \int_M q\cdot\psi d\mu>0,\quad\text{for any }q\in K\backslash\{0\}.
$$
Let $\psi_0$ be the mean-value part of $\psi$,
$$
\psi_0=\psi-\int_M\psi d\mu.
$$
Then for any eigenfunction $u\in E_k$ we have
$$
\int_Mu^2\psi_0 d\mu=\int_Mu^2\psi d\mu-\left(\int_M\psi d\mu\right)\left(\int_Mu^2 d\mu\right)>0.
$$
This is a contradiction with~$(ii)$.

\noindent
$(iii)\Rightarrow (ii)$. Conversely, let $(u_i)$ be a finite collection of eigenfunctions satisfying the hypothesis~$(iii)$. Then for any $\phi\in L^\infty(M)$ with zero mean-value, we have
$$
\int_M(\sum_iu_i^2)\phi d\mu=\int_M\phi d\mu=0.
$$
This demonstrates the hypothesis~$(ii)$.
\end{proof}

\begin{proof}[Proof of Theorem~$\mathrm{C}_k$: part~(i)]
Let $(u_i)$, where $i=1,\ldots,\ell$, be a collection of eigenfunctions from Lemma~\ref{nadir}. By Prop.~\ref{prop:eigen} they satisfy the integral identity
\begin{equation}
\label{neq:eigen}
\int_M\langle\nabla u_i,\nabla\varphi\rangle\mathit{dVol}_g=\lambda_k(\mu,c)\int_Mu_i\cdot\varphi d\mu
\end{equation}
for any function $\varphi\in\mathcal L$. Let $S\subset M$ be the support of an extremal measure $\mu$; we suppose that its interior is not empty. Taking $\varphi$ to be $u_i\cdot\psi$, where $\psi\in C_0^\infty(S)$, we can re-write relation~\eqref{neq:eigen} in the form
$$
\int_M\abs{\nabla u_i}^2\psi\mathit{dVol}_g+\frac{1}{2}\int_M\langle\nabla (u_i^2),\nabla\psi\rangle\mathit{dVol}_g=\lambda_k(\mu,c)\int_Mu_i^2\psi d\mu.
$$
Summing up and using the relation $\sum_iu_i^2=1$ on $S$, we obtain
$$
\int_S\left(\sum_i\abs{\nabla u_i}^2\right)\psi\mathit{dVol}_g=\lambda_k(\mu,c)\int_S\psi d\mu
$$
for any compactly supported smooth function $\psi$. This implies that the measure $\mu$ is absolutely continuous with respect to $\mathit{Vol}_g$ in the interior of $S$, and its density function has the form
\begin{equation}
\label{density}
\left(\sum_i\abs{\nabla u_i}^2\right)/\lambda_k(\mu,c).
\end{equation}
Now equation~\eqref{neq:eigen} can be re-written in the form
$$
\int_S\langle\nabla u_i,\nabla\varphi\rangle\mathit{dVol}_g=\int_S\left(\sum_i\abs{\nabla u_i}^2\right)u_i\varphi\mathit{dVol}_g,
$$
where $\varphi$ is a smooth function supported in $S$. This relation is precisely the equation on a map
\begin{equation}
\label{map}
U:M\supset \INT S\ni x\longmapsto (u_1(x),\ldots, u_\ell(x))\in
S^{\ell-1}\subset\mathbf R^\ell
\end{equation}
to be {\em weakly harmonic} with respect to the standard round metric on $S^{\ell-1}$, and by Helein's regularity theory~\cite{He} we conclude that the map given by~\eqref{map} is $C^\infty$-smooth. The zeroes of the density function~\eqref{density} correspond to the branch points of the harmonic map $U$ and, as is known~\cite{Jo,Sa}, are isolated. As a branch point such a zero has a well-defined order, that is in an appropriate local complex coordinate near it the density $\abs{\nabla U}^2$ has the form $z^{2l}\rho(z)$, where $\rho(z)>0$ and $l\geqslant 1$ is an integer.
\end{proof}
\begin{proof}[Proof of Theorem~$\mathrm{C}_k$: part~(ii)]
Setting $\varphi$ to be equal to $u_i$ in relation~\eqref{neq:eigen}, and summing over the $i$'s, we obtain
$$
\int_M\left(\sum_i\abs{\nabla u_i}^2\right)\mathit{dVol}_g=\lambda_k\int_M\left(\sum u_i^2\right)d\mu.
$$
Since, by Lemma~\ref{nadir}, the sum $\sum u_i^2$ equals one on the support $S$ of the measure $\mu$, we obtain
\begin{equation}
\label{id:1}
\int_M\left(\sum_i\abs{\nabla u_i}^2\right)\mathit{dVol}_g=\lambda_k.
\end{equation}
On the other hand, the absolutely continuous part $\mu_{\mathit{abs}}$ of $\mu$ has the form~\eqref{density} in the interior of $S$, and 
\begin{equation}
\label{id:2}
\mu_\mathit{abs}(S)=\lambda_k^{-1}\int_S\left(\sum_i\abs{\nabla u_i}^2\right)\mathit{dVol}_g.
\end{equation}
Suppose the contrary to the statement, that is the singular set $\Sigma$ of $\mu$ is empty. Then, the mass $\mu_{\mathit{abs}}(S)$ equals one. By the hypotheses the complement $M\backslash S$ is a non-empty open set, and comparing relations~\eqref{id:1} and~\eqref{id:2}, we conclude that $\nabla u_i$ vanishes on $M\backslash S$ for any $i=1,\ldots,\ell$. It is then straightforward to see that the $u_i$'s are constant functions on $M\backslash S$, and the sum $\sum u_i^2$ equals one almost everywhere on $M$.

Now the repetition of the argument in the proof of part~(i) shows that the weakly harmonic map 
$$
U:M\ni x\longmapsto (u_1(x),\ldots, u_\ell(x))\in
S^{\ell-1}\subset\mathbf R^\ell
$$
is defined on the whole surface, and by Helein's regularity~\cite{He}, is $C^\infty$-smooth everywhere. Since it is constant on a non-empty open subset $M\backslash S$, by the unique continuation~\cite{Sam}, we conclude that it is constant everywhere. Thus, the sum $\sum\abs{\nabla u_i}^2$ vanishes identically, and by~\eqref{id:2} we obtain a contradiction with the assumption that $\mu_\mathit{abs}$ is a probability measure.
\end{proof}

Finally, mention that Lemma~\ref{nadir} together with the arguments in the proof of Theorem~$C_k$ show that $\lambda_k$-extremal metrics with conical singularities correspond to harmonic maps into a Euclidean sphere defined by a collection of $\lambda_k$-eigenfunctions. This statement is a generalization of the results in~\cite{El03}, see also~\cite{El00,Na96}, known for Riemannian metrics. Due to its importance we state it  below as a corollary.
\begin{corollary}
\label{hm}
Let $(M,c)$ be a compact Riemannian surface, possibly with boundary, and $h$ be a metric with conical singularities conformal to $g\in c$. Then the metric $h$ is $\lambda_k$-extremal if and only if there exists a finite collection of $\lambda_k$-eigenfunctions $(u_i)$, where $i=1,\ldots,\ell$, such that $\sum_i u_i^2=1$, and hence, the map
$$
M\ni x\longmapsto (u_1(x),\ldots, u_\ell(x))\in S^{\ell-1}\subset\mathbf R^\ell
$$
is a harmonic map into a unit sphere in the Euclidean space.
\end{corollary}

\section{Existence of partially regular maximisers}
\label{prm}
\subsection{The main theorem}
Recall that Theorem~$B_1$ states that any $\lambda_1$-maximising sequence of continuous Radon measures converges to a maximal continuous Radon measure $\mu$ provided
\begin{equation}
\label{h:c}
\sup\{\lambda_1(\mu,c)\mu(M):\mu\text{ is a continuous Radon measure  on }M\}>8\pi.
\end{equation}
Due to Theorem~$C_k$ the complete regularity of any maximiser requires the compactness of the embedding
\begin{equation}
\label{h:em}
\mathcal{L}=L_2(M,\mu)\cap L_2^1(M,\mathit{Vol}_g)\subset L_2(M,\mu),
\end{equation}
which, as the results in Sect.~\ref{nonvan} show, is a rather independent hypothesis. As was mentioned earlier,
Nadirashvili and Sire~\cite{Na10a}, and very recently Petrides~\cite{Pet}, announced  the result stating the existence of a completely regular $\lambda_1$-maximiser. Both papers develop a delicate analysis related to the construction of a special maximising sequence that converges to such a maximiser. The purpose of this section is to give a simple argument that proves the existence of a partially regular maximiser.

For a given increasing sequence $(C_n)$ of real numbers such that $C_n\to +\infty$ as $n\to +\infty$, we consider the sets $\mathcal C_n$ formed by continuous Radon measures $\mu$ such that
$$
\mu(B(x,r))\leqslant C_n\cdot r^2
$$
for any closed metric ball $B(x,r)$. Equivalently, the $\mathcal C_n$'s can be described as sets of absolutely continuous measures whose densities $\chi_n$ are bounded above by $C_n$. Clearly, each $\mathcal C_n$ is closed in the weak topology, and thus, contains a measure $\mu_n$ that maximises $\lambda_1(\mu,c)$ in $\mathcal C_n$. If a given conformal class $c$ satisfies the hypothesis~\eqref{h:c}, then, by Theorem~$B_1$ the 
sequence $(\mu_n)$ contains a subsequence that converges weakly to a continuous $\lambda_1$-maximal measure. Moreover, by the results in Sect.~\ref{nonvan}, the measure $\mu$ satisfies the linear isocapacitory inequality and, in paricular, vanishes on sets of zero capacity. Our following result describes further regularity properties of this limit measure.
\begin{TD1}
Let $(M,c)$ be a compact surface, possibly with boundary, endowed with a conformal class $c$ of Riemannian metrics that satisfies the hypothesis~\eqref{h:c}. Let $\mu$ be a continuous $\lambda_1$-maximal measure constructed in the fashion described above, and $S$ be the interior of its support. Then the singular part of $\mu$ is supported in a nowhere dense set $\Sigma$ (of zero Lebesgue measure), and one of the following two possibilities holds:
\begin{itemize}
\item[(i)] either the absolutely continuous part of $\mu$ is trivial, or
\item[(ii)] the absolutely continuous part of $\mu$ has a $C^\infty$-smooth density in $S\backslash\bar\Sigma$ that vanishes at most at a finite number of points on any compact subset in $S\backslash\bar\Sigma$.
\end{itemize}
\end{TD1}

The theorem says that if the maximal measure $\mu$ is not completely singular, than it is the volume measure of a smooth Riemannian metric in $S$, conformal to the ones in $c$, outside of a nowhere dense set of zero Lebesgue measure . As in Theorem~$C_k$, the zeroes of its density  in $S\backslash\bar\Sigma$ correspond to conical singularities of this metric. We decompose the singular set $\Sigma$  into the union of two sets $\Sigma_\mathit{int}$ and $\Sigma_\mathit{out}$, defined as
$$
\Sigma_\mathit{int}=\Sigma\cap S,\quad\text{and}\quad\Sigma_\mathit{out}=\Sigma\backslash S.
$$
Recall that by Theorem~$C_k$, if the embedding~\eqref{h:em} is compact, then $\Sigma_\mathit{int}=\varnothing$. 
In addition, if the complement $M\backslash S$ is non-empty, then $\Sigma_\mathit{out}\ne\varnothing$. These statements indicate on relationships between the isocapacitory inequalities and the properties of the singular set. More precisely, let $\beta(B(x,r),\mu)$ be an isocapacity constant of a closed ball, see Sect.~\ref{nonvan}, and $\Sigma_*$ be the complement in $S$ of a maximal set where $\beta(B(x,r))\to 0$ as $r\to 0$ uniformly in $x$. Then $\Sigma_*$ is a subset of the singular set $\Sigma_\mathit{int}$, and is empty if and only if so is $\Sigma_\mathit{int}$. The last statement here is a consequence of Corollary~\ref{cor:cap}. Alternatively, for a given $\alpha>1$ one can also consider the set $\Sigma_\alpha$ that is the complement in $S$ of a maximal set where
$$
\mu(B(x,r))\ln^\alpha(1/r)\to 0\quad\text{as~} r\to 0
$$
uniformly in $x\in S$. Then, $\Sigma_\alpha\subset\Sigma_\mathit{int}$ and from Lemma~\ref{sc} we conclude that $\Sigma_\alpha$ is empty if and only if so is the singular set $\Sigma_\mathit{int}$.

\subsection{Preliminary considerations}
Let $\mu_n\in\mathcal C_n$ be a probability measure that maximises the first eigenvalue $\lambda_1(\mu,c)$ among all measures in $\mathcal C_n$. By $\chi_n$ we denote its density, and by $\Sigma_n$ the set $\chi_n^{-1}(C_n)$. Changing $\chi_n$ on a zero Lebesgue measure set, we can always assume that the set $\Sigma_n$ is {\em regular} in the following sense: for any $\varepsilon>0$ there exist a closed and open sets $F$ and $G$ such that
\begin{equation}
\label{h:reg}
F\subset\Sigma_n\subset G\qquad\text{and}\qquad \mathit{Vol_g}(G\backslash F)<\varepsilon.
\end{equation}
Let $\mu$ be the weak limit of the measures $\mu_n$, and $S$ be the interior of its support. We fix an open set $D\Subset S$; without loss of generality, we can suppose that it belongs to the support of each $\mu_n$.

Now consider the family of conformal deformations
$$
\mu_{n,t}(X)=\left(\int_Xe^{\phi t}d\mu_n\right)/\left(\int_Me^{\phi t}d\mu_n\right)
$$
with a zero mean-value function $\phi\in L^\infty(M)$ that vanishes on $\Sigma_n$. Since the measures $\mu_{n,t}$ belong to $\mathcal C_n$, we conclude that
\begin{equation}
\label{h:max}
\lambda_1(\mu_{n,t},c)\leqslant\lambda_1(\mu_n,c).
\end{equation}
Clearly, the embedding~\eqref{h:em} is compact for any measure in $\mathcal C_n$, and thus the spaces of first eigenfunctions are non-empty and finite-dimensional. The following claim is essentially a consequence of the first variation formulas (Lemma~\ref{d:eiv}).
\begin{claim}
\label{h:c1}
For each measure $\mu_n$ there exists a finite collection of eigenfunctions $(u_{i,n})$ such that
$$
\sum\limits_i u^2_{i,n}(x)\equiv 1\qquad\text{for any~~} x\in D\backslash\Sigma_n.
$$
\end{claim}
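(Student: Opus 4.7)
The plan is to run the proof of Lemma~\ref{nadir} with two modifications: the class of admissible deformations is restricted to those keeping $\mu_{n,t}\in\mathcal{C}_n$, and the separation argument is localised to the set $D\setminus\Sigma_n$ where the density is unsaturated. Since $\mu_n$ has $L^\infty$ density, Example~\ref{Lp} shows the embedding~\eqref{h:em} is compact for $\mu_n$, so Prop.~\ref{ex:eigen} yields $\lambda_1(\mu_n,c)>0$ and a finite-dimensional eigenspace $E_1(\mu_n)$, and Lemma~\ref{d:eiv} applies.

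The density of $\mu_{n,t}$ equals $\chi_n e^{\phi t}/Z_t$ with $Z_t=1+o(t)$, so the requirement $\chi_n e^{\phi t}/Z_t\leqslant C_n$ for small $t$ of both signs forces $\phi\equiv 0$ on $\Sigma_n$ up to $\mathit{Vol}_g$-null sets — this is where the regularity~\eqref{h:reg} of $\Sigma_n$ is invoked, to ensure the constraint can be realised by honest $L^\infty$ functions. Combining the maximality~\eqref{h:max} with the first variation formulas of Lemma~\ref{d:eiv} then gives
\[
\inf_{u\in E_1(\mu_n)}L_\phi(u,\mu_n)\leqslant 0\leqslant\sup_{u\in E_1(\mu_n)}L_\phi(u,\mu_n)
\]
for every zero mean-value $\phi\in L^\infty(M)$ vanishing on $\Sigma_n$. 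Since $\lambda_1(\mu_n,c)>0$, this is equivalent to indefiniteness on $E_1(\mu_n)$ of the quadratic form $u\mapsto\int_M u^2\phi\,d\mu_n$; in particular it holds for every $\phi$ supported in $D\setminus\Sigma_n$ with $\int\phi\,d\mu_n=0$.

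Assume $\mu_n(D\setminus\Sigma_n)>0$ (otherwise the statement is vacuous). Let $K\subset L_1(D\setminus\Sigma_n,\mu_n)$ be the convex hull of $\{u^2|_{D\setminus\Sigma_n}:u\in E_1(\mu_n)\}$; since $E_1(\mu_n)$ is finite-dimensional, $K$ lies in a finite-dimensional subspace and is a closed convex cone containing $0$. I claim that $\mathbf{1}_{D\setminus\Sigma_n}\in K$. Otherwise, Hahn-Banach separation of $\mathbf{1}_{D\setminus\Sigma_n}$ from $K$ yields $\psi\in L^\infty(D\setminus\Sigma_n,\mu_n)$ with $\int_{D\setminus\Sigma_n}\psi\,d\mu_n<0$ and $\int_{D\setminus\Sigma_n}\psi u^2\,d\mu_n\geqslant 0$ for every $u\in E_1(\mu_n)$. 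Setting $\phi:=\psi-\bar\psi\cdot\mathbf{1}_{D\setminus\Sigma_n}$, where $\bar\psi$ is the $\mu_n$-mean of $\psi$ on $D\setminus\Sigma_n$, and extending by zero, produces a function vanishing on $\Sigma_n$ with $\int_M\phi\,d\mu_n=0$. Since $\bar\psi<0$ strictly, a direct computation gives $\int_M u^2\phi\,d\mu_n\geqslant -\bar\psi\int_{D\setminus\Sigma_n}u^2\,d\mu_n>0$ for every $u\in E_1(\mu_n)$ whose restriction to $D\setminus\Sigma_n$ is nontrivial; a generic perturbation argument then produces a nonzero $u\in E_1$ with this property, contradicting the indefiniteness.

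Since $K$ sits in a finite-dimensional space and $\mathbf{1}_{D\setminus\Sigma_n}\in K$, Carath\'eodory's theorem writes $\mathbf{1}_{D\setminus\Sigma_n}=\sum_i\alpha_i u_i^2$ as a finite convex combination; absorbing $\sqrt{\alpha_i}$ into the $u_i$ yields the required collection of $\lambda_1$-eigenfunctions with $\sum_i u_i^2\equiv 1$ on $D\setminus\Sigma_n$. Pointwise equality (rather than $\mu_n$-a.e.) follows from continuity of the eigenfunctions on $D$, which is standard elliptic regularity for the equation $-\Delta u=\lambda_1(\mu_n,c)\chi_n u$ with $\chi_n\in L^\infty$. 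The main obstacle is the interplay between the two constraints on $\phi$: naively subtracting $\int\psi\,d\mu_n$ from $\psi$ to enforce zero mean-value destroys the vanishing of $\phi$ on $\Sigma_n$. The fix — subtracting the mean only over $D\setminus\Sigma_n$, weighted by the indicator of that set — is the reason the regularity~\eqref{h:reg} of $\Sigma_n$ is needed, guaranteeing that $\mathbf{1}_{D\setminus\Sigma_n}$ is an honest bounded measurable function and that the resulting $\phi$ is an admissible direction.
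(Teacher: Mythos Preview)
Your approach is the same as the paper's: combine the first variation formula (Lemma~\ref{d:eiv}) with the maximality~\eqref{h:max} to obtain indefiniteness of $u\mapsto\int u^2\phi\,d\mu_n$ on the first eigenspace for every zero-mean $\phi$ vanishing on $\Sigma_n$, and then run the separation argument of Lemma~\ref{nadir}. The paper carries out the separation over $M\setminus\Sigma_n$ (so that $\sum_i u_{i,n}^2\equiv 1$ on all of $\supp(\mu_n)\setminus\Sigma_n$, in particular on $D\setminus\Sigma_n$), whereas you localise directly to $D\setminus\Sigma_n$; this is only a cosmetic difference.

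There is, however, a gap at the end of your separation step. To contradict indefiniteness you must show $\int_M u^2\phi\,d\mu_n>0$ for \emph{every} nonzero $u\in E_1(\mu_n)$, not just for one; ``a generic perturbation argument producing a nonzero $u$'' does not accomplish this. What is actually required is that no nonzero first eigenfunction vanishes $\mu_n$-a.e.\ on $D\setminus\Sigma_n$. Since each such $u$ is a weak solution of $-\Delta u=\lambda_1(\mu_n,c)\chi_n u$ with $\chi_n\in L^\infty$, and since $\mu_n(D\setminus\Sigma_n)>0$ together with $\chi_n\leqslant C_n$ forces $D\setminus\Sigma_n$ to contain a set of positive Lebesgue measure on which $\chi_n>0$, this follows from unique continuation for Schr\"odinger operators with bounded potential (the result of Sawyer cited elsewhere in the paper). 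The paper's terse proof glosses over the same point, but it is a unique continuation issue, not a perturbation one.

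A minor remark: the regularity~\eqref{h:reg} of $\Sigma_n$ plays no role in this argument. The indicator $\mathbf 1_{D\setminus\Sigma_n}$ is a bounded measurable function for any measurable $\Sigma_n$, and requiring $\phi\in L^\infty(M)$ to vanish on a measurable set needs no regularity of that set. The property~\eqref{h:reg} is used later in the paper (to ensure the limit set $\Sigma$ is nowhere dense and to work in the interior of $D\setminus\Sigma_n$), not in Claim~\ref{h:c1} itself.
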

\begin{proof}
Combining Lemma~\ref{d:eiv} with relation~\eqref{h:max}, we conclude that the quadratic form
$$
u\longmapsto\int_{M\backslash\Sigma_n}u^2\phi d\mu_n
$$
is indefinite on the first eigenspace $E$ for any zero mean-value function $\phi\in L^\infty(M\backslash\Sigma_n)$. Now the conclusion follows from a separation argument similar to the one used in the proof of Lemma~\ref{nadir}.
\end{proof}
The following claim yields a formula for the densities $\chi_n$; its proof is a repetition of the argument in the proof of Theorem~$C_k$, see Sect.~\ref{em1}.
\begin{claim}
\label{h:c2}
Under the conditions of Claim~\ref{h:c1}, the eigenfunctions $(u_{i,n})$ are smooth in the interior of $D\backslash\Sigma_n$, and so are the densities $\chi_n$. Moreover, we have the following relation
$$
\chi_n(x)=\left(\sum\limits_i\abs{\nabla u_{i,n}}^2(x)\right)/\lambda_1(\mu_n,c)
$$
for any interior point $x\in D\backslash\Sigma_n$.
\end{claim}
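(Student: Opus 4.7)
The plan is to mirror the argument in the proof of Theorem~$C_k$, localised to the interior of $D\backslash\Sigma_n$. By Proposition~\ref{prop:eigen}, each eigenfunction $u_{i,n}$ from Claim~\ref{h:c1} satisfies the integral identity
$$
\int_M\langle\nabla u_{i,n},\nabla\varphi\rangle\mathit{dVol}_g=\lambda_1(\mu_n,c)\int_M u_{i,n}\varphi d\mu_n
$$
for every $\varphi\in\mathcal{L}$. Using the regularity property~\eqref{h:reg} of $\Sigma_n$, which ensures that $\mathrm{int}(D\backslash\Sigma_n)$ is a genuine open set, I would test against $\varphi=u_{i,n}\psi$ with $\psi\in C_0^\infty(\mathrm{int}(D\backslash\Sigma_n))$, expand via the product rule, sum over $i$, and use $\sum_i u_{i,n}^2\equiv 1$ on $\mathrm{supp}\,\psi\subset D\backslash\Sigma_n$ to annihilate the cross term $\tfrac12\sum_i\int\langle\nabla(u_{i,n}^2),\nabla\psi\rangle\mathit{dVol}_g$. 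The resulting identity
$$
\int\bigl(\sum_i\abs{\nabla u_{i,n}}^2\bigr)\psi\mathit{dVol}_g=\lambda_1(\mu_n,c)\int\psi d\mu_n,
$$
together with the arbitrariness of $\psi$, yields the claimed density formula on $\mathrm{int}(D\backslash\Sigma_n)$.

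The smoothness assertion then follows exactly as in Theorem~$C_k$: the constraint $\sum_i u_{i,n}^2=1$ combined with the eigenvalue equation exhibits the map $U_n=(u_{1,n},\ldots,u_{\ell,n})$ as a weakly harmonic map from $\mathrm{int}(D\backslash\Sigma_n)$ into $S^{\ell-1}\subset\mathbf{R}^\ell$, and Helein's regularity theorem for weakly harmonic maps from surfaces gives $C^\infty$-regularity of each $u_{i,n}$ there. Smoothness of $\chi_n$ is then immediate from the density formula.

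The principal subtlety I expect is the careful deployment of the regularity of $\Sigma_n$: one needs $\mathrm{int}(D\backslash\Sigma_n)$ to be a non-degenerate open set on which the identity $\sum_i u_{i,n}^2=1$ holds pointwise, so that smooth compactly supported cutoffs there are admissible test functions and the product-rule integration by parts legitimately eliminates the cross term. Beyond this bookkeeping, the proof is a verbatim adaptation of the one of Theorem~$C_k$.
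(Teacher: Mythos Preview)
Your proposal is correct and matches the paper's own treatment exactly: the paper states that the proof of Claim~\ref{h:c2} ``is a repetition of the argument in the proof of Theorem~$C_k$'', and you have reproduced precisely that argument---the integral identity from Proposition~\ref{prop:eigen}, the test function $\varphi=u_{i,n}\psi$, summation with $\sum_i u_{i,n}^2=1$ to extract the density, and H\'elein's regularity for the resulting weakly harmonic sphere-valued map. Your remark on the regularity of $\Sigma_n$ is a sensible piece of bookkeeping but does not go beyond what the paper implicitly assumes.
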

Finally, we need the following statement.
\begin{claim}
\label{h:c3}
The multiplicities of the first eigenvalues $\lambda_1(\mu_n,c)$ are bounded by a quantity that depends on the topology of $M$ only. 
\end{claim}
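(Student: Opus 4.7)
The bound will follow from a classical multiplicity argument of Cheng, Besson and Nadirashvili, applied to the elliptic PDE satisfied by first eigenfunctions of $\mu_n$. Since each $\mu_n\in\mathcal{C}_n$ is absolutely continuous with density $\chi_n\in L^\infty$, Prop.~\ref{prop:eigen} shows that any first eigenfunction $u$ is a weak solution on $M$ of
$$
-\Delta_g u=\lambda_1(\mu_n,c)\,\chi_n\,u.
$$
By elliptic regularity for equations with $L^\infty$ coefficients, $u\in W^{2,p}_{loc}(M)$ for every $p<\infty$, so $u\in C^{1,\alpha}(M)$; the Hartman--Wintner theorem then gives that at each zero $p$ of $u$, in a suitable local complex coordinate one has $u(z)=\mathrm{Re}(a\,z^k)+o(|z|^k)$ for some integer $k\geq 1$ and $a\in\mathbf{C}\setminus\{0\}$. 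In particular, zeros are either regular or isolated singular zeros of some finite order.

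Next I would show that any first eigenfunction of $\mu_n$ has exactly two nodal domains. Such an eigenfunction is $\mu_n$-orthogonal to the constants and so changes sign, giving at least two domains; Courant's theorem, whose proof depends only on the variational characterisation of $\lambda_1$, gives at most two. Viewing the nodal set as a graph embedded in $M$ with vertices at the critical zeros (of orders $k_j\geq 2$), edges along the regular part, and exactly two faces, the Euler characteristic computation for $M$ yields
$$
\sum_{j}(k_j-1)\leq C_1(M),
$$
where $C_1(M)$ depends only on the topology of $M$ (its genus, orientability and the presence of boundary).

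To bound the multiplicity $m$ of $\lambda_1(\mu_n,c)$, fix an interior point $p\in M$. Evaluation of a first eigenfunction on its $(K-1)$-jet at $p$ is a linear map from the eigenspace into a space of dimension $K(K+1)/2$; hence, if $m>K(K+1)/2$, there exists a nontrivial first eigenfunction vanishing to order $\geq K$ at $p$. Combined with the previous paragraph this forces $K-1\leq C_1(M)$, yielding
$$
m\leq \frac{(C_1(M)+1)(C_1(M)+2)}{2},
$$
a quantity depending only on the topology of $M$. The principal technical point is the verification that the classical nodal-set machinery extends to our measure-theoretic setting; all the ingredients above are valid for weak solutions of $-\Delta_g u=Vu$ with $V\in L^\infty$, which is precisely our case. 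A minor additional consideration is that $\chi_n$ may vanish on open subsets of $M$, where $u$ is then $g$-harmonic; but the nodal-set analysis only simplifies there, so no additional work is needed.
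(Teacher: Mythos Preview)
Your approach is essentially the same as the paper's: both reduce to Cheng's nodal-set argument for Schr\"odinger-type equations with bounded potential, using Courant's theorem to bound vanishing orders via an Euler-characteristic count, and then a jet/filtration argument to convert the vanishing-order bound into a multiplicity bound. One technical caveat: the classical Hartman--Wintner theorem assumes H\"older-continuous coefficients, so for merely $L^\infty$ potential you should instead invoke the results of Hoffmann-Ostenhof et al.\ (together with Sawyer's unique continuation), which is exactly what the paper does; this gives the finite-order expansion $u(x)=P_N(x-x_0)+O(|x-x_0|^{N+\epsilon})$ you need.
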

When the measure $\mu$ is the genuine volume measure of a $C^\infty$-smooth Riemannian metric, the statement is classical and is due to Cheng~\cite{Cheng}. Claim~\ref{h:c3} is a partial case of a more general result proved in~\cite[Sect.~5]{Ko3}.

\subsection{Proof of Theorem~$D_1$}
Denote by $\Sigma_n^*$ the union $\cup_{k\geqslant n }\Sigma_k$. Since the volumes of the $\Sigma_n$'s converge to zero,
$$
\mathit{Vol}_g(\Sigma_n)\leqslant 1/C_n\rightarrow 0 \quad\text{as~ }n\rightarrow +\infty,
$$
then selecting their subsequence, if necessary, we can suppose that so do the volumes of the $\Sigma_n^*$'s. Further, the sequence $\Sigma_n^*$ is nested, and by $\Sigma$ we denote its limit, that is $\cap_n\Sigma_n^*$. Clearly, the limit set $\Sigma$ has a zero Lebesgue measure. Besides, it satisfies property~\eqref{h:reg} and, in particular, is nowhere dense in $M$.

Now let $G$ be an open neighbourhood of $\Sigma$; it also contains sets $\Sigma_n^*$ for a sufficiently large $n$. By Claim~\ref{h:c1}, for any measure $\mu_n$ there exists a collection of eigenfunctions $(u_{i,n})$ such that $\sum_i u^2_{i,n}=1$ on $D\backslash G$, where $D\Subset S$ is a fixed open set. By Claim~\ref{h:c3}, the multiplicities of the eigenvalues $\lambda_1(\mu_n,c)$ are bounded and, choosing a subsequence of the $\mu_n$'s, we can suppose that for each $n\in\mathbf N$ there is the same number of eigenfunctions $(u_{i,n})$, where $i=1,\ldots,m$, such that $\sum_i u^2_{i,n}=1$. In other words, for any measure $\mu_n$, we have a harmonic map
$$
U_n:D\backslash\bar G\ni x\longmapsto(u_{i,n}(x))\in S^{m-1}\subset\mathbf R^m. 
$$
By Claim~\ref{h:c2}, we conclude that their energies are also bounded,
$$
E(U_n):=\int\limits_{D\backslash G}\abs{\nabla U_n}^2d\mathit{Vol}_g\leqslant\lambda_1(\mu_n,c).
$$
Now the {\em bubble convergence theorem}~\cite{SaU,Jo} for harmonic maps applies on any compact subset $F$ in the interior of $D\backslash\bar G$. More precisely, there exists a subsequence, also denoted by $(U_n)$, that converges weakly in $W^{1,2}(F, S^{m-1})$ to a smooth harmonic map $U:F\to S^{m-1}$. Moreover, there exists a finite number of `bubble points' $\{x_1,\ldots,x_\ell\}\subset F$ such that the $U_n$'s converge in $C^\infty$-topology on compact sets in $F\backslash\{x_1,\ldots,x_\ell\}$, and the energy densities $\abs{\nabla U_n}^2$ converge weakly in the sense of measures to $\abs{\nabla U}^2$ plus a finite sum of Dirac measures:
$$
\abs{\nabla U_n}^2\mathit{dVol}_g\rightharpoonup\abs{\nabla U}^2\mathit{dVol}_g+\sum_jm_j\delta_{x_j}.
$$
By the uniqueness of the weak limit, we conclude that the restriction of the limit maximal measure $\mu$ on the interior of $D\backslash\bar G$ has the form
$$
\left(\abs{\nabla U}^2\mathit{dVol}_g+\sum_jm_j\delta_{x_j}\right)/\lambda_1(\mu,c).
$$
However, by Theorem~$B_1$, the maximal measure $\mu$ is continuous and, thus, no `bubble points' can occur in the expression above. Taking smaller sets $G$, we conclude that the limit harmonic map $U$ is well-defined on $D\backslash\bar\Sigma$, and is a finite energy map on the whole $D$. Exhausting the set $S$ (the interior of the support of $\mu$) by sets $D\Subset S$, we further conclude that $U$ extends to it as a finite energy harmonic map. Thus, the maximal measure $\mu$ on $S$ has the form
$$
d\mu=\left(\abs{\nabla U}^2/\lambda_1(\mu,c)\right)d\mathit{Vol}_g+d\mu\lfloor\Sigma_\mathit{int},
$$
where the last term stands for the interior singular part of $\mu$. Finally, if $\abs{\nabla U}\not\equiv 0$, then the zeroes of $\abs{\nabla U}$ correspond to the branch points of $U$; as is known~\cite{Jo,Sa}, there can be only finite number of them on any compact subset in $S\backslash\bar\Sigma$.
\qed

\section{Other related results and remarks}
\label{other}
\subsection{Concentration-compactness of extremal metrics}
The ideas developed in Sect.~\ref{weakmax:ex}-\ref{prm} allow also to analyse the limits of sequences formed by extremal conformal metrics. The following statement is a general result in this direction.
\begin{TEk}
Let $M$ be a closed surface endowed with a conformal class $c$, and $(g_n)$ be a sequence of $\lambda_k$-extremal  smooth metrics in $c$ (possibly with conical singularities) normalised to have a unit volume. Then there exists a subsequence $(g_{n_\ell})$ such that one of the following holds:
\begin{itemize}
\item[(i)] the volume measures $\mathit{Vol}(g_{n_\ell})$ converge weakly to a pure discrete measure supported at $k$ points at most, and
$$
\lim\sup\lambda_k(g_{n_\ell})\leqslant C_*k,
$$
where $C_*$ is the Korevaar constant;
\item[(ii)]
the subsequence $(g_{n_\ell})$ converges smoothly to a Riemannian metric (which may have conical singularities only) away from $k$ points at most where the volumes concentrate.
\end{itemize}
\end{TEk}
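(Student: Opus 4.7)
The plan is to combine weak compactness of probability measures with the harmonic-map structure provided by the extremality condition (Lemma~\ref{nadir}), and then apply Sacks--Uhlenbeck bubble convergence to the resulting sequence of harmonic maps. First I would extract a subsequence $(g_{n_\ell})$ along which the volume measures $\mu_{n_\ell}=\mathit{Vol}(g_{n_\ell})$ converge weakly to a Radon probability measure $\mu$, along which $\lambda_k(g_{n_\ell})$ converges to some $\lambda_*\in[0,C_*k(\gamma+1)]$ (possible by Theorem~$\mathbf{A_k}$), and along which the dimension of the $\lambda_k$-eigenspace equals a fixed integer $m$ (bounded by a topological constant via an extension to higher eigenvalues of Claim~\ref{h:c3}).

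Next, the $\lambda_k$-extremality of each $g_n$ combined with Lemma~\ref{nadir} yields eigenfunctions $u_{1,n},\ldots,u_{m,n}$ satisfying $\sum_i u_{i,n}^2\equiv 1$ on $M$, so that $U_n=(u_{1,n},\ldots,u_{m,n}):M\to S^{m-1}\subset\mathbf R^m$ is weakly harmonic. H\'elein's regularity theorem makes each $U_n$ smooth, with Dirichlet energy $E(U_n)=\lambda_k(g_n)$, and the computation from the proof of Theorem~$\mathbf{C_k}$ expresses the density of $\mu_n$ with respect to a fixed reference metric $g\in c$ as $\abs{\nabla U_n}^2/\lambda_k(g_n)$.

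I would then apply the Sacks--Uhlenbeck bubble convergence theorem to $(U_n)$: after a further subsequence there exist a smooth harmonic map $U:M\to S^{m-1}$ and a finite set of concentration points $\{x_1,\ldots,x_\ell\}\subset M$ with $U_n\to U$ in $C^\infty$ on compact subsets of $M\setminus\{x_1,\ldots,x_\ell\}$, and weak convergence of energy densities
$$
\abs{\nabla U_n}^2\mathit{dVol}_g\rightharpoonup\abs{\nabla U}^2\mathit{dVol}_g+\sum_{j=1}^\ell m_j\delta_{x_j},\qquad m_j\geqslant\varepsilon_0>0.
$$
When $\lambda_*>0$, dividing by $\lambda_k(g_{n_\ell})$ identifies $\mu$ as $(\abs{\nabla U}^2/\lambda_*)\mathit{dVol}_g+\sum_j(m_j/\lambda_*)\delta_{x_j}$; when $\lambda_*=0$ the total energy collapses, which forces $U$ to be constant and all the mass of $\mu$ to concentrate at the $x_j$'s.

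The dichotomy in the theorem then corresponds to whether $U$ is constant. If $U$ is constant, $\mu$ is purely discrete and the bound $\lim\sup\lambda_k(g_{n_\ell})\leqslant C_*k$ follows from the higher-eigenvalue extension of Lemma~\ref{lambda1c} stated in the remark at the end of Sect.~\ref{weakmax:ex}; this is case~(i). If $U$ is non-constant, the $C^\infty$-convergence $U_n\to U$ on $M\setminus\{x_j\}$ together with the density formula gives smooth convergence of $g_{n_\ell}$ to a Riemannian metric with conical singularities at the branch points of $U$ exactly as in Theorem~$\mathbf{C_k}$; this is case~(ii). The main obstacle I anticipate is the sharp bound $\ell\leqslant k$ on the number of concentration points: each $x_j$ supports an almost-zero-energy test function thanks to the vanishing capacity of points, and combining $\ell$ such cut-offs with eigenfunctions for $\lambda_0,\ldots,\lambda_{k-1}$ should, by the min--max principle, produce a $(k+1)$-dimensional test space on which the Rayleigh quotient is controlled only if $\ell\leqslant k$. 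Making this min--max counting precise, in the presence of the bubbles and of the extremality constraint, is the technical heart of the proof.
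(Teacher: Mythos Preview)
Your proposal is essentially the paper's own argument: the paper gives only a sketch, citing the harmonic-map characterisation of extremal metrics (Lemma~\ref{nadir}), Cheng's multiplicity bounds, bubble convergence for harmonic maps as in the proof of Theorem~$D_1$, and the remark at the end of Sect.~\ref{weakmax:ex} for the estimate in case~(i)---precisely the ingredients you assemble. Your identification of the bound $\ell\leqslant k$ on the number of concentration points as the residual technical issue, to be handled by a min--max argument with zero-capacity cut-offs, is an accurate reading of what that sketch leaves implicit.
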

\noindent
The proof is based on the characterisation of extremal metrics as harmonic maps into Euclidean spheres (Corollary~\ref{hm}) together with Cheng's multiplicity bounds in~\cite{Cheng}. The argument is similar to the one in the proof of Theorem~$D_1$ and uses the bubble convergence theorem for harmonic maps. The estimate in the case~$(i)$ is a consequence of the remark at the end of Sect.~\ref{weakmax:ex}.

For the case of the first eigenvalue the above result can be significantly sharpened.
\begin{TE1}
Let $M$ be a closed surface endowed with a conformal class $c$, and $(g_n)$ be a sequence of $\lambda_1$-extremal  smooth metrics in $c$ (possibly with conical singularities) normalised to have a unit volume. Then there exists a subsequence $(g_{n_\ell})$ such that one of the following holds:
\begin{itemize}
\item[(i)] the volume measures $\mathit{Vol}(g_{n_\ell})$ converge weakly to a pure Dirac measure $\delta_x$ for some $x\in M$, and $\lambda_1(g_{n_\ell})\to 8\pi$ as $\ell\to +\infty$;
\item[(ii)]
the subsequence $(g_{n_\ell})$ converges smoothly to a $\lambda_1$-extremal metric $g$ (possibly with a finite number of conical singularities) and $\lambda_1(g_{n_\ell})\to\lambda_1(g)$ as $\ell\to +\infty$.
\end{itemize}
\end{TE1}
In particular, the theorem says that the set of conformal $\lambda_1$-extremal metrics whose first eigenvalues are bounded away from $8\pi$ is always compact. The critical value $8\pi$ is the maximal first eigenvalue of unit volume metrics on the $2$-sphere, and as is known (due to the non-compactness of the conformal group $\PSL(2,\mathbf C)$) the maximal metrics on it form a non-compact space. 
This compactness statement can be also viewed as a version of the following result by Montiel and Ros~\cite{MR}: {\em on a compact surface of positive genus each conformal class has at most one metric which admits a minimal immersion into a unit sphere by first eigenfunctions.} Indeed, our statement says that the set of conformal metrics that admit harmonic maps (of energy bounded away from $8\pi$) into a unit sphere by first eigenfunctions is compact. Here we, of course, assume that these metrics are allowed to have conical singularities.

The proof of Theorem~$E_1$ follows closely the line of the argument in~\cite{Ko} where analogous results for Schrodinger eigenvalues have been proved. In fact, the formalism developed in the present work allows to shorten the original proof in~\cite{Ko} significantly. The statement of Theorem~$E_1$ continues to hold when extremal metrics $g_n$ belong to variable conformal classes $c_n$ that lie in a bounded domain of the moduli space of conformal structures on $M$. We refer to~\cite{Ko} for details.

\subsection{Remarks and open questions}

\noindent
{\em 1.} As was mentioned, Nadirashvili and Sire~\cite{Na10a} and Petrides~\cite{Pet} announced the existence of a completely regular $\lambda_1$-maximiser in every conformal class on a closed surface. However, it is important to understand up to what extent {\em any $\lambda_1$-maximal measure is regular}. Recall that, as we saw in Sect.~\ref{em1}, there are $\lambda_1$-extremal completely singular measures. It is extremely interesting to understand whether there are completely singular $\lambda_1$-maximal measures. It seems plausible that such maximal measures do not exist, and moreover, the support of any  $\lambda_1$-maximal measure has to coincide with $M$. Similar questions one can also pose for $\lambda_k$-maximisers. 

\medskip
\noindent
{\em 2.} The properties of the singular set $\Sigma$ of a partially regular maximiser, constructed in Sect.~\ref{prm}, seem to be closely related to the properties of its subsets $\Sigma_*$, where the isocapacity constant $\beta(B(x,r))$ fails to converge to zero uniformly in $x$ as $r\to 0$. It is interesting to know more about the relationship between these sets; in particular, whether it is possible to describe the difference $\Sigma\backslash\Sigma_*$ and the hypotheses when it is empty.  Similarly, the properties of the difference $\Sigma\backslash\Sigma_\alpha$, see Sect.~\ref{prm}, are also very interesting. They could lead to the estimates for the Hausdorff dimension of the singular set $\Sigma$. 

\medskip
\noindent
{\em 3. Maximising eigenvalues among circle-invariant conformal metrics.} One of the possibilities to achieve complete regularity of extremal metrics is to impose extra geometric hypotheses on them. For example, one can consider metrics with symmetries. In the note~\cite{Ko2}, we show how  this works for a class of conformal metrics invariant under a free circle action on the torus. In this setting one can show that for any $k>0$ there exists a circle-invariant metric (in any conformal class $\tilde c$ formed by such metrics), understood as a capacitory Radon measure, which maximises the $k$th eigenvalue among all such measures. Besides, any such $\lambda_k$-extremal metric is
\begin{itemize}
\item[(i)] either completely singular and is supported in a zero Lebesgue measure set which is a union of circle orbits, or
\item[(ii)] it is a genuine metric in $\tilde c$, which is $C^\infty$-smooth in the interior of its support.
\end{itemize}
Mention that here there is no hypothesis on the maximal $\lambda_k$-value, unlike in Theorem~$D_1$. The reason is that any circle-invariant Radon measure has a trivial discrete part. The circle-invariance also implies that the maximal metric (in the case~(ii)) has no conical singularities and, thus, is a genuine Riemannian metric.

More generally, it is interesting to understand how any (possibly partial) symmetry of a $\lambda_k$-extremal metric (in the sense of Sect.~\ref{em1}) improves its regularity properties; cf. the example after Theorem~$C_k$.

\medskip
\noindent
{\em 4. Maximising eigenvalues among conformal metrics with integral curvature bound.} Another example when eigenvalue maximisers have good regularity properties is the extremal problem for conformal metrics with the integral Gaussian curvature bound
\begin{equation}
\label{igc}
\int\abs{K_g}^p\mathit{dVol}_g\leqslant C<+\infty,\quad\text{where~}p>1.
\end{equation}
As is known, see~\cite{Tro} and Appendix in~\cite{CY}, sequences of such conformal metrics of bounded volume satisfy concentration-compactness properties, and the concentration phenomenon can be controlled by positive lower bounds on eigenvalues. For example, there always exists a $C^{0,\alpha}$-smooth $\lambda_1$-maximiser among conformal metrics satisfying~\eqref{igc}. On the other hand, maximising sequences for higher eigenvalues have limits that are $C^{0,\alpha}$-smooth metrics away from a finite number of points. The latter are characterised by the volume concentration and, after an appropriate rescaling, correspond to the metrics on  a collection of "bubble spheres" glued by thin tubes.

\appendix
\section{Appendix: details on Theorems~$\mathbf{A_1}$ and~$\mathbf{A_k}$}
\label{ap:gy}
\subsection{Proof of Theorem~$A_1$}
First, we explain the following version of the result by Yang and Yau~\cite[p.~58]{YY80}. Recall that a measure $\mu$  is called {\em continuous} if the mass of any point $\mu(x)$ is equal to zero.
\begin{prop}
\label{YY}
Let $M$ be a closed Riemann surface and $c$ be the conformal class induced by the complex structure. Suppose that $M$ admits a holomorphic map $\varphi:M\to S^2$ of degree $d$. Then for any continuous  Radon measure $\mu$ on $M$ the first eigenvalue satisfies the estimate 
$$
\lambda_1(\mu,c)\mu(M)\leqslant 8\pi d.
$$
\end{prop}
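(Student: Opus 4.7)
My plan is to adapt the classical Yang-Yau argument, using the holomorphic map $\varphi\colon M\to S^2$ as a (ramified) conformal covering and exploiting the three-parameter family of Möbius transformations of $S^2$ to generate three test functions with zero $\mu$-mean. For any conformal automorphism $s\colon S^2\to S^2$ set $\Phi_s = s\circ \varphi\colon M\to S^2\subset \mathbf{R}^3$; the candidate test functions are the coordinate functions $x^i\circ \Phi_s$, $i=1,2,3$.

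The first step is the Hersch balancing. Applying the Hersch lemma of Appendix~\ref{ap:gy} to the pushforward $\varphi_{*}\mu$ on $S^2$ --- which is continuous, since $\mu$ is continuous and all fibres of the degree-$d$ map $\varphi$ are finite --- I would obtain a Möbius transformation $s$ for which
$$
\int_M (x^i\circ\Phi_s)\, d\mu=0,\qquad i=1,2,3.
$$
Each $u^i := x^i\circ\Phi_s$ is bounded and $L_2(M,\mu)$-orthogonal to the constants, so the variational characterisation of the first eigenvalue gives
$$
\lambda_1(\mu,c)\int_M (u^i)^2\, d\mu \;\leqslant\; \int_M |\nabla u^i|^2\, \mathit{dVol}_g
$$
for each $i$. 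Summing over $i$ and using the identity $\sum_i (x^i)^2\equiv 1$ on $S^2$ yields
$$
\lambda_1(\mu,c)\,\mu(M)\;\leqslant\;\sum_{i=1}^{3}\int_M |\nabla(x^i\circ\Phi_s)|^2\, \mathit{dVol}_g.
$$

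To finish I would evaluate the right-hand side directly from the holomorphy of $\Phi_s = s\circ\varphi$. In two dimensions the Dirichlet integral is conformally invariant, and the total energy of a holomorphic map into $S^2$ equals twice the area of the image counted with multiplicity. Since $\Phi_s$ has degree $d$, this area equals $d\cdot\mathrm{Area}(S^2) = 4\pi d$, so the right-hand side is exactly $8\pi d$, completing the proof.

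The only delicate point is ensuring the applicability of Hersch's lemma to a general continuous Radon measure rather than a smooth volume form. The version in Appendix~\ref{ap:gy}, which has already been invoked in exactly this generality in the proof of Lemma~\ref{lambda1c}, is tailored to this setting, and the continuity of $\varphi_{*}\mu$ --- guaranteed by the finiteness of the fibres of $\varphi$ together with the continuity of $\mu$ --- is what makes the topological (Brouwer-type) argument underlying Hersch's lemma go through; I do not anticipate any further difficulty.
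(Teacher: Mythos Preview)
Your proposal is correct and follows essentially the same argument as the paper: apply the Hersch lemma to obtain a balancing M\"obius transformation, use the three coordinate functions $x^i\circ s\circ\varphi$ as test functions in the Rayleigh quotient, sum over $i$ using $\sum_i(x^i)^2=1$, and identify the resulting Dirichlet energy of $s\circ\varphi$ as $8\pi d$. Your explicit remark that $\varphi_*\mu$ is continuous (because $\varphi$ has finite fibres) is a nice clarification of why the Hersch lemma applies, but otherwise the two proofs coincide.
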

The key ingredient of the proof is the following lemma,
see~\cite{H70,LY82}.
\begin{HL}
Let $x^i$, $i=1,2,3$, be coordinate functions in $\mathbf R^3$, and $\varphi:M\to S^2\subset\mathbf R^3$ be a conformal map to the unit sphere centred at the origin. Then for any continuous Radon measure $\mu$ on $M$ there exists a conformal diffeomorphism $s:S^2\to S^2$ such that
$$
\int_M(x^i\circ s\circ\varphi)d\mu=0\qquad\text{for any }i=1,2,3. 
$$
\end{HL}
\begin{proof}[Proof of Prop.~\ref{YY}]
Let $s$ be the conformal transformation from the Hersch lemma. Using $(x^i\circ s\circ\varphi)$'s as test functions for the Rayleigh quotient, we obtain
$$
\lambda_1(\mu,c)\int_M(x^i\circ s\circ\varphi)^2d\mu\leqslant\int_M\abs{\nabla(x^i\circ s\circ\varphi)}^2\mathit{dVol}_{g_*}.
$$
Summing up these inequalities over all $\imath$'s and using the identity $\sum(x^i)^2=1$ on the unit sphere, we see that
$$
\lambda_1(\mu,c)\mu(M)\leqslant\sum_i\int_M\abs{\nabla(x^i\circ s\circ\varphi)}^2\mathit{dVol}_{g_*}.
$$
The right-hand side here is the energy of the map $(s\circ\varphi)$, which equals $8\pi d$; see~\cite{EeLe}. 
\end{proof}

Now Theorem~$A_1$ follows by application of the Riemann-Roch theorem in the same fashion as in Yang-Yau~\cite{YY80}. As a consequence, we also obtain a version of Hersch's isoperimetric inequality for continuous
Radon measures on the sphere $S^2$. The estimates of Li and Yau~\cite{LY82} for the first eigenvalue via the conformal volume carry over our setting as well.

\subsection{Proof of Theorem~$A_k$}
Recall that the {\em capacitor} in $M$ is a pair $(F,G)$ of Borel subsets $F\subset G$. Given a reference metric $g\in c$, the {\em capacity} of a capacitor $(F,G)$ is defined as
$$
\CAP(F,G)=\inf\left\{\int_M\abs{\nabla\varphi}^2\mathit{dVol}_g\right\},
$$
where the infimum is taken over all $C^\infty$-smooth functions on $M$ whose support lies in the interior of $G$ and such that $\varphi\equiv 1$ in a neighbourhood of $F$.

The idea of the proof is to find a collection of $(k+1)$ disjoint capacitors $(F_i,G_i)$, that is with the disjoint $G_i$'s, such that
\begin{itemize}
\item[$(i)$] $\mu(F_i)\geqslant v$
\item[$(ii)$] $\CAP(F_i,G_i)\leqslant\kappa$
\end{itemize}
for any $i=0,\ldots, k$ and some positive constants $v$ and $\kappa$. Given such capacitors one directly obtains the bound
\begin{equation}
\label{gy1}
\lambda_k(\mu,c)\leqslant \kappa/v.
\end{equation}
Indeed, any test-function $\varphi_i$ for the capacitor $(F_i,G_i)$ whose Dirichlet integral is not greater than $(\kappa+\varepsilon)$ satisfies the inequality
$$
\int_M\abs{\nabla\varphi_i}^2\mathit{dVol}_g\leqslant (\kappa+\varepsilon)/v\cdot\int_M\varphi_i^2d\mu.
$$
Since the capacitors are disjoint, this inequality holds for any function from the span of the $\varphi$'s, $i=0,\ldots, k$. Thus, we conclude that the $k$th eigenvalue $\lambda_k(\mu,c)$ is not greater than $(\kappa+\varepsilon)/v$, and since $\varepsilon$ is arbitrary, we get the bound~\eqref{gy1}.

The existence of a collection of disjoint capacitors satisfying the hypothesis~$(i)$ for any non-atomic measure is the main result in~\cite{GY99,GNY}. On the other hand, since the capacity is defined with respect to a fixed Riemannian metric, the second hypothesis~$(ii)$ can be often easily demonstrated. Before explaining these ingredients in more detail, we first introduce more notation. 

We regard the surface $M$ as a metric space whose distance $d$ is induced by the path lengths in the metric $g$. By an annulus $A$ in $M$ we call a subset of the following form
$$
\{x\in M: r\leqslant d(x,a)<R\},
$$
where $a\in M$ and $0\leqslant r<R<\infty$. We also use the notation $2A$ for the annulus
$$
\{x\in M: r/2\leqslant d(x,a)<2R\}.
$$
It is a consequence of standard results (see the proof of Theorem~5.3 in~\cite{GNY}) that there exists a constant $Q$ (depending on a reference metric $g$) such that for any open metric ball $B$ the capacity $\CAP(B,2B)$ is not greater than $Q$. It is then straightforward to show that for any annulus $A$ in $M$ one has $\CAP(A,2A)\leqslant 4Q$, see~\cite[Lemma~2.3]{GNY}.

Building on the ideas of Korevaar~\cite{Korv}, Grigor'yan and Yau showed that for any continuous measure $\mu$ one can always find a collection of disjoint annuli $\{2A_i\}$ such that the values $\mu(A_i)$ are bounded below by some positive constant. More precisely, in~\cite{GY99,GNY} they prove the following statement.
\begin{GY}
Let $(M,d)$ be a metric space satisfying the following covering property: there exists a constant $N$ such that any metric ball of radius $r$ in $M$ can be covered by at most $N$ balls of radii $r/2$. Suppose that all metric balls in $M$ are precompact. Then for any continuous Radon measure on $M$ and any positive integer $k$ there exists a collection $\{2A_i\}$, where $i=0,\ldots,k$, of disjoint annuli such that
\begin{equation}
\label{gy2}
\mu(A_i)\geqslant c\mu(M)/k\qquad\text{for any }i,
\end{equation}
where the constant $c$ depends only on $N$.
\end{GY}
Clearly, the metric space $(M,d)$ under consideration satisfies the hypothesis of this theorem, and using~\eqref{gy2} we obtain the bounds
$$
\lambda_k(\mu,c)\mu(M)\leqslant Ck,
$$
where the constant $C$ equals $4Q/c$. Now we show that when $M$ is an orientable surface, the constant $C$ can be chosen in the form $C_*(\gamma+1)$, where $C_*$ is a universal constant and $\gamma$ is the genus of $M$.

Regarding $M$ as a Riemann surface and using the Riemann-Roch theorem, we can find a holomorphic  branch cover $u:M\to S^2$ whose degree is not greater than $(\gamma+1)$. Applying Grigor'yan-Yau theorem to the push-forward measure $\mu^*$ on $S^2$ we find a collection of disjoint annuli $\{2A^*_i\}$ such that
$$
\mu^*(A_i^*)\geqslant c_*\mu^*(S^2)/k.
$$
Besides, we also have
$$
\CAP(A_i^*,2A_i^*)\leqslant 4Q_*
$$
for some constant $Q_*$, where the capacity is understood in the sense of the standard metric on $S^2$. Setting
$$
F_i=u^{-1}(A_i^*)\quad\text{and}\quad G_i=u^{-1}(2A_i^*),
$$
we obtain a collection of disjoint capacitors on $M$ that satisfy~$(i)$ with $v$ equal to $c_*\mu(M)/k$.
Further, since the Dirichlet integral is locally preserved by $u$, we conclude that these capacitors also satisfy~$(ii)$ with $\kappa$ equal to $4Q_*(\gamma+1)$. Now the arguments described above yield the eigenvalue bounds
$$
\lambda_k(\mu,c)\mu(M)\leqslant C_*(\gamma+1)k,
$$
where $C_*$ equals $4Q_*/c_*$. In particular, we see that $\lambda_k(\mu,c)\mu(M)$ is bounded over all conformal classes $c$ and continuous Radon measures $\mu$ on $M$.

\section{Appendix: proofs of statements in Sect.~\ref{em1}}
\label{ap:proofs}
\subsection{Proof of Lemma~\ref{c:eif}}
Recall that, since the integral distances $d(\mu,\mu_n)$ are finite, the $L_2$-spaces, regarded as topological vector spaces, corresponding to the measures $\mu$ and $\mu_n$ coincide. Below by $(\cdot,\cdot)$ and $(\cdot,\cdot)_n$ we denote the scalar products on this space corresponding to $L_2(M,\mu)$ and $L_2(M,\mu_n)$ respectively. We claim that the Dirichlet form
$$
D[u]=\int_M\abs{\nabla u}^2\mathit{dVol}_g
$$
is closed with respect to each of the scalar products above. Indeed, by Prop.~\ref{ex:eigen}, the first eigenvalue $\lambda_1(\mu,c)$ does not vanish, and for any $u$ with zero mean-value we have
$$
\int_Mu^2d\mu\leqslant\lambda_1^{-1}(\mu,c)\cdot\int_M\abs{\nabla u}^2d\mathit{Vol}_g.
$$
Now the closeness on the zero mean-value $u$'s follows from the completeness of the space $L^1_2(M,\mathit{Vol}_g)$ modulo constants,  see~\cite{Ma}. Since $D[u]$ vanishes on constants, it is also closed on the whole $L_2$-space. The same argument also yields the claim for the measures $\mu_n$.

Now we apply the representation theorem in~\cite[Chap.~VI]{Kato} to the closed symmetric form $D[u]$ to conclude that there exist closed self-adjoint operators $T$ and $T_n$ such that
$$
D(u,v)=(Tu,v),\qquad D(u,v)=(T_nu,v)_n.
$$
It is straightforward to see that the eigenvalues of $T$ and $T_n$ coincide with $\lambda_k(\mu,c)$ and $\lambda_k(\mu_n,c)$ respectively, and so do their eigenspaces. Further, since the topologies induced by the scalar products $(\cdot,\cdot)$ and $(\cdot,\cdot)_n$ coincide, the operators $T_n$ are also closed in $L_2(M,\mu)$. From the definition of the integral distance we obtain
$$
\abs{1-(T_nu,u)/(Tu,u)}\leqslant\delta(\mu,\mu_n)
$$
for any non-constant $u\in L_2(M,\mu)$. Now the perturbation theorem~\cite[Chap.~VI, Th.~3.6]{Kato} applies, and we conclude that $T_n\to T$ in a generalised sense as closed operators, and the corresponding spectral projectors converge in the norm topology.
\qed
\begin{remark*}
Mention that, in fact, a stronger statement holds: for any $k$ there exists a constant $C(k)$ such that
\begin{equation}
\label{o_bound}
\abs{\Pi_k-\Pi_{n,k}}\leqslant C(k)\cdot\delta(\mu,\mu_n)
\end{equation}
for any sufficiently large $n$. Indeed, by~\cite[Chap.~VI, Th.~3.4]{Kato} the resolvents of $T$ and $T_n$ at the point $(-1)$ satisfy the relation
$$
\abs{\mathcal R(-1,T)-\mathcal R(-1,T_n)}\leqslant C\cdot\delta(\mu,\mu_n).
$$
Further, by the results in~\cite[Chap.~IV]{Kato} the difference $(\mathcal R(\zeta,T)-\mathcal R(\zeta,T_n))$, where $\zeta$ ranges over a compact subset of the common resolvent set, can be estimated in the same fashion for a sufficiently large $n$. Now relation~\eqref{o_bound} follows from the fact that the eigenspace projections are integrals of the resolvents over a small closed curve bounding a region containing $\lambda_k(\mu,c)$ and $\lambda_k(\mu_n,c)$ .
\end{remark*}

\subsection{Proof of Claim~\ref{c1}}
We demonstrate the proof of the first relation; the second follows by similar arguments. Denote by $\Lambda_t$ the sum of all eigenspaces corresponding to $\lambda_i(\mu_t,c)<\lambda_k(\mu_t,c)$, where $i<k$, and by $P_t$ and $P_t^*$ the orthogonal projections on it in $L_2(M,\mu)$ and $L_2(M,\mu_t)$ respectively. Define the modified Rayleigh quotient $\bar{\matheur R}_c(u,\mu_t)$ as 
$$
\left(\int_M\abs{\nabla(u-P_t^*u)}^2\mathit{dVol}_g\right)/\left(\int_M\abs{u-P_t^*u}^2d\mu_t\right).
$$
Clearly, the following relation holds:
$$
\lambda_k(\mu_t,c)=\inf_{u}\bar{\matheur R}_c(u,\mu_t),
$$
where the infimum is taken over all non-trivial $u$ that do not lie in $\Lambda_t$. The first inequality of the claim is a straightforward consequence of the following relation
$$
\bar{\matheur R}_c(u,\mu_t)=\matheur R_c(u,\mu_t)+o(t)\qquad
\text{as}\quad t\to 0,
$$
where $u\in E_k$, and $o(t)$ denotes the quantity such that $o(t)/t$ converges to zero uniformly in $u\in E_k\backslash\{0\}$. Denote by $\Delta(t)$ the difference of the Rayleigh quotients $\bar{\matheur R}_c(u,\mu_t)-\matheur R_c(u,\mu_t)$; it is given by the formula
$$
\Delta(t)={\matheur R}_c(u,\mu_t)\cdot\left(\int |P_t^*u|^2d\mu_t\right)/\left(\int |u-P_t^*u|^2d\mu_t\right).
$$
Now by the remark after Lemma~\ref{c:eif}, for a proof of the claim it is sufficient to show that
\begin{equation}
\label{o:inek}
\int |P_t^*u|^2d\mu_t\leqslant\left(\int u^2d\mu\right)\cdot O(t^2)\qquad\text{as}\quad t\to 0
\end{equation}
for any $u\in E_k\backslash\{0\}$. To see that this holds, choose a basis $(e_{i,t})$ for the space $\Lambda_t$ orthogonal in $L_2(M,\mu_t)$ and normalised in $L_2(M,\mu)$. By $(e_i)$ we denote the corresponding basis at $t=0$. Then for any $u\in E_k\backslash\{0\}$, we have
$$
\int |P_t^*u|^2d\mu_t\leqslant\max_i\left(\int\abs{e_{i,t}}^2d\mu_t\right)\cdot\sum\limits_i\left(\int e_{i,t}ud\mu_t-\int e_iud\mu\right)^2.
$$
By Cauchy's inequality, each term in the sum on the right-hand side can be estimated by twice the sum
$$
\left(\int e_{i,t}ud\mu_t-\int e_{i,t}ud\mu\right)^2+\left(\int(e_{i,t}-e_i)ud\mu\right)^2.
$$
Finally, each term here can be now estimated by the right-hand side in~\eqref{o:inek}: for the first it follows from the definition of $\mu_t$, for the second -- from the inequality
$$
\int(e_{i,t}-e_i)^2d\mu\leqslant 4\abs{P_{i,t}-P_i}^2,
$$
see~\cite[Chap.~IV]{Kato}, and relation~\eqref{o_bound}.
\qed
\begin{remark*}
For the case of the first eigenvalue estimate~\eqref{o:inek} can be proved directly, without appealing to Kato's perturbation theory and relation~\eqref{o_bound}. Indeed, in this case the lower eigenspaces coincide and, hence, the difference $(P_{i,t}-P_i)$ is identically zero.
\end{remark*}

\subsection{Proof of Claim~\ref{c2}}
Let $\Pi_t$ be the orthogonal projection onto $E_t$ in $L_2(M,\mu)$. By Lemma~\ref{c:eif}, for a proof of the claim it is sufficient to show that the family $L_\phi(\Pi_tu,\mu)$ converges to the quantity $L_\phi(u,\mu)$ as $t\to 0$ uniformly in $u\in E_k\backslash\{0\}$. Denote by $\matheur Q(u,\mu)$ the quotient
$$
\left(\int_Mu^2\phi d\mu\right)/\left(\int_Mu^2d\mu\right).
$$
By the triangle inequality, we obtain
\begin{multline}
\label{only}
\abs{L_\phi(u,\mu)-L_\phi(\Pi_tu,\mu)}\leqslant\lambda_k(\mu)\abs{\matheur Q(u,\mu)-\matheur Q(\Pi_tu,\mu)}\\
+\abs{\phi}_\infty\abs{\matheur R_c(u,\mu)-\matheur R_c(\Pi_tu,\mu)},
\end{multline}
where $\abs{~\cdot}_\infty$ stands for the $L_\infty$-norm. By Lemma~\ref{c:eif} we conclude that the quotient
$$
\left(\int_M(\Pi_tu)^2 d\mu\right)/\left(\int_Mu^2d\mu\right)
$$
converges to $1$ uniformly in $u\in E_k\backslash\{0\}$. Using this, it is straightforward to estimate the first term on the right-hand side in~\eqref{only} by  the quantity $\lambda_k(\mu)\abs{\phi}_\infty$ times the sum
$$
\abs{1-\left(\int_Mu^2d\mu\right)/\left(\int_M(\Pi_tu)^2 d\mu\right)}+ C\left(\int_M\abs{u^2-(\Pi_tu)^2}d\mu\right)/\left(\int_Mu^2d\mu\right)
$$
for all sufficiently small $t$. By the discussion above the first term here converges to zero uniformly over non-trivial $u\in E_k$, and by Lemma~\ref{c:eif} so does the second term. Further, the term involving the difference of the Rayleigh quotients on the right hand-side in~\eqref{only} can be estimated in the following fashion:
$$
\abs{\matheur R_c(u,\mu)-\matheur R_c(\Pi_tu,\mu)}\leqslant\abs{\lambda_k(\mu)-\lambda_k(\mu_t)}+\abs{\matheur  R_c(\Pi_tu,\mu_t)-\matheur R_c(\Pi_tu,\mu)},
$$
where the second term is bounded by $\lambda_k(\mu_t)\delta(\mu,\mu_t)$. Thus, we see that it also converges to zero uniformly in $u$.
\qed

{\small

}

\end{document}